\theoremstyle{theorem}
\newtheorem {theo}{Theorem}[section]
\newtheorem*{theo*}{Theorem}
\newtheorem {lemme}[theo]{Lemma}
\newtheorem*{lemme*}{Lemma}
\newtheorem {prop}[theo]{Proposition}
\newtheorem*{prop*}{Proposition}
\newtheorem {cor}[theo]{Corollary}
\newtheorem*{cor*}{Corollary}
\newtheorem*{cor_proof*}{Corollary (of the proof)}
\newtheorem*{conjecture*}{Conjecture}
\theoremstyle{definition}
\newtheorem {defi}[theo]{Definition}
\newtheorem*{defi*}{Definition}
\newtheorem {nota}[theo]{Notation}
\newtheorem*{nota*}{Notation}
\theoremstyle{remark}
\newtheorem {remarque}[theo]{Remark}
\newtheorem*{remarque*}{Remark}
\newtheorem*{warning*}{Warning}
\newtheorem*{remarques*}{Remarks}
\newtheorem {warnings}[theo]{Warnings}
\newtheorem*{warnings*}{Warnings}
\newtheorem*{convention*}{Convention}
\newtheorem {exemple}[theo]{Example}
\newtheorem*{exemple*}{Example}
\newtheorem*{exemples*}{Examples}
\newtheorem*{question*}{Question}
\newtheorem*{questions*}{Questions}
\newtheorem*{fact*}{Fact}
\newtheorem*{acknowledgments}{Acknowledgments}
\def\N{{\mathds N}}
\def\R{{\mathds R}}
\def\Z{{\mathds Z}}
\def\2Z{{\fract{\Z}/{2\Z}}}
\def\e{\varepsilon}
\def\p{\partial}
\def\Id{{\textnormal{Id}}}
\def\Ker{{\textnormal{Ker}}}
\def\UnN{\llbracket1,n\rrbracket}
\def\ie{{\it i.e. }}
\def\pcup{\operatornamewithlimits{\cup}\limits}
\def\psqcup{\operatornamewithlimits{\sqcup}\limits}
\def\psum{\operatornamewithlimits{\sum}\limits}
\def\fract#1/#2{\hbox{\leavevmode
  \kern.1em \raise .25ex \hbox{\the\scriptfont0 $#1$}\kern-.1em }\big/
  {\hbox{\kern-.15em \lower .5ex \hbox{\the\scriptfont0 $#2$}} }}
\def\fractt#1/#2{\hbox{\leavevmode
  \kern.1em \raise .25ex \hbox{\the\scriptfont0 $#1$}\kern-.1em
}\lower .2ex\hbox{\Big/}
  {\hbox{\kern-.15em \lower .8ex \hbox{\the\scriptfont0 $#2$}} }}
\def\subfract#1/#2{\hbox{\leavevmode
  \kern.1em \raise .25ex \hbox{\the\scriptfont0 \scriptsize $#1$}\kern-.1em }/
  {\hbox{\kern-.15em \lower .5ex \hbox{\the\scriptfont0 \scriptsize $#2$}} }}
\newcommand{\noi}{\noindent}
\newcommand{\disp}{\displaystyle}
\newcommand{\saut}{\vspace{\baselineskip}}
\newcommand{\dessin}[2]{
  \vcenter{\hbox{\includegraphics[height=#1]{#2.pdf}}}}
\newcommand{\dessinH}[2]{
  \vcenter{\hbox{\includegraphics[width=#1]{#2.pdf}}}}
\newcommand{\func}[3]{
  #1 \colon #2 \longrightarrow #3}
\def\hp{{\textrm{h}}}
\def\sv{\textrm{v}}
\def\sa{\textrm{a}}
\def\Ru{\textrm{R1}}
\def\Rd{\textrm{R2}}
\def\Rt{\textrm{R3}}
\def\TC{\textrm{TC}}
\def\OC{\textrm{OC}}
\def\UC{\textrm{UC}}
\def\SA{\textrm{SA}}
\def\Cd{{\textrm{C}^2}}
\def\Cdd{{\textrm{C}^2_2}}
\def\Ct{{\textrm{C}^3}}
\def\Ctu{{\textrm{C}^3_1}}
\def\Ctd{{\textrm{C}^3_2}}
\def\Ctt{{\textrm{C}^3_3}}
\def\GD{\textrm{GD}}
\def\GDn{\GD_n}
\def\GP{\textrm{GP}}
\def\GPn{\GP_n}
\def\rT{\textrm{rT}}
\def\rTn{\rT_n}
\def\rTHn{\rTn^{\hp}}
\def\rL{\textrm{rL}}
\def\rLn{\rL_n}
\def\rLHn{\rLn^{\hp}}
\def\rP{\textrm{rP}}
\def\rPn{\rP_n}
\def\rPHn{\rPn^{\hp}}
\def\wSL{\textrm{wSL}}
\def\wSLn{\wSL_n}
\def\wSLHn{\wSLn^{v}}
\def\wP{\textrm{wP}}
\def\wPn{\wP_n}
\def\wPHn{\wPn^{v}}
\def\wGD{\textrm{wGD}}
\def\wGDH{\wGD^{\sa}}
\def\wGDn{\wGD_n}
\def\wGDHn{\wGDn^{\sa}}
\def\wGP{\textrm{wGP}}
\def\wGPn{\wGP_n}
\def\wGPHn{\wGPn^{\sa}}
\def\vSL{\textrm{vSL}}
\def\vSLn{\vSL_n}
\def\vGD{\textrm{GD}}
\def\vGDH{\vGD^{\sa}}
\def\vGDn{\vGD_n}
\def\vGDHn{\vGDn^{\sa}}
\def\Tube{\textrm{Tube}}
\def\F{\textrm{F}}
\def\Fn{{\F_n}}
\def\RF{\textrm{RF}}
\def\RFn{{\RF_n}}
\def\Aut{\textrm{Aut}}
\def\AutC{\Aut_{\textrm{C}}}
\def\fDA{{\varphi_{\textrm{G}\rightarrow \textrm{A}}}}
\def\fAD{{\varphi_{\textrm{A}\rightarrow \textrm{G}}}}
\def\hG{{G_{\widehat{I}}}}
\def\tB{{B}}
\newcommand{\Ast}{\mbox{\larger[-3]$\ast$}}
\newcommand{\overstar}[1]{\accentset{\Ast}{#1}}
\newcommand{\loverstar}[2]{\makebox[0pt][l]{$\accentset{\Ast}{\phantom{#2}}$}#1}
\def\oB{\overline{B}}
\def\om{\overline{\mu}}
\def\Out{\textrm{Out}}
\def\In{\textrm{In}}
\def\Over{\textrm{Over}}
\def\Cross{\textrm{Cross}}
\begin{document}

\title{Homotopy classification of ribbon tubes and welded string links}
\author[B. Audoux]{Benjamin Audoux}
         \address{Aix Marseille Universit\'e, I2M, UMR 7373, 13453 Marseille, France}
         \email{benjamin.audoux@univ-amu.fr}
\author[P. Bellingeri]{Paolo Bellingeri}
         \address{Universit\'e de Caen, LMNO, 14032 Caen, France}
         \email{paolo.bellingeri@unicaen.fr}
\author[J.B. Meilhan]{Jean-Baptiste Meilhan}
         \address{Universit\'e Grenoble Alpes, IF, 38000 Grenoble, France}
         \email{jean-baptiste.meilhan@ujf-grenoble.fr}
\author[E. Wagner]{Emmanuel Wagner}
         \address{IMB UMR5584, CNRS, Univ. Bourgogne Franche-Comt\'e, F-21000 Dijon, France}
         \email{emmanuel.wagner@u-bourgogne.fr}
\date{\today}
\begin{abstract}
Ribbon 2-knotted objects are locally flat embeddings of surfaces in 4-space which bound immersed 3-manifolds with only ribbon singularities. 
They appear as topological realizations of welded knotted objects, which is a natural quotient of virtual knot theory.
In this paper we consider ribbon tubes and ribbon torus-links, which are natural analogues of string links and links, respectively.
We show how ribbon tubes naturally act on the reduced free group, and how this action classifies ribbon tubes up to link-homotopy, 
that is when allowing each component to cross itself.
At the combinatorial level, this provides a classification of welded string links up to self-virtualization.
This generalizes a result of Habegger and Lin on usual string links, and the above-mentioned action on the reduced free group 
can be refined to a general ``virtual extension'' of Milnor invariants.
As an application, we obtain a classification of ribbon torus-links up to link-homotopy.
\end{abstract}

\maketitle

\begin{center}
  \normalsize \em {Dedicated to El\'eonore, Lise, Helena and Silo\'e.}
\end{center}

\saut

\section*{Introduction}
\label{sec:intro}

The theory of $2$-knots, i.e. locally flat embeddings of the $2$-sphere in $4$-space, takes its origins in the mid-twenties from the work of Artin \cite{artin2}. 
However, the systematic study of these objects only really began in the early sixties, notably through the work of Kervaire, Fox and Milnor \cite{FM,KM,kervaire}, 
but also in a series of papers from Kansai area, Japan, partially referenced below.\footnote{See Suzuki's comprehensive survey \cite{suzuki} 
for a much more complete bibliography on the subject.  } 
From this early stage, the class of ribbon $2$-knots was given a particular attention.
Roughly speaking, a $2$-knot is ribbon if it bounds a locally flat immersed $3$-ball whose singular set is a finite number of ribbon disks.
Introduced by T. Yajima \cite{yajima} under the name of simply knotted $2$-spheres, they were extensively studied by T. Yanagawa in \cite{Yana,yanagawa2,yanagawa3}.
Ribbon $2$-knots admit natural generalizations to ribbon $2$-knotted objects, such as links and tangles.
One particularly nice feature of these objects is that they admit a diagrammatic representation
which allows an explicit presentation of the associated knot groups, via a Wirtinger-type algorithm.

This diagrammatic representation also allows to use ribbon $2$-knotted objects as a (partial) topological realization of welded knot theory.
Welded knots are a natural quotient of virtual knots, by the so-called Over Commute relation, which is one of the two forbidden moves in virtual knot theory.
What makes this Over Commute relation natural is that the virtual knot group, and hence any virtual knot invariant derived from it, factors through it.
These welded knotted objects first appeared in a work of Fenn-Rimanyi-Rourke in the more algebraic context of braids \cite{FRR}.
Although virtual knot theory can be realized topologically as the theory of knots in thickened surfaces modulo handle stabilization \cite{CKS,Kuperberg},
this is no longer true for the welded quotient.
However T. Yajima \cite{yajima}, showed that inflating classical diagrams defines a map, called the Tube map, which sends knots onto ribbon torus-knots,
an analogue of ribbon $2$-knots involving embedded tori. This map has been generalized by S. Satoh \cite{Citare} to a surjective map from the welded diagrammatics. 
But this map fails to be one-to-one, and its kernel is not fully understood yet. 
Ribbon knotted objects in dimension $4$ are therefore closely related to both 3-dimensional topology and welded theory.

\saut

In the present paper, we consider two classes of ribbon knotted objects with several components: {\it ribbon tubes} and {\it ribbon torus-links}.
They are the natural analogues in the ribbon context of string links and links, respectively. 
As such, the latter ones can be obtained from the former by a natural braid-type closure operation.

We give in Theorem \ref{prop:classif} the classification of ribbon torus-links up to link-homotopy, that is, up to homotopies in which different components 
remain distinct. 
This theorem should be compared, on one hand, with the link-homotopy classification of links in $3$-space by Habegger and Lin \cite{HL}, 
and on the other hand, with the result of Bartels and Teichner proving the triviality of $2$-links up to link-homotopy \cite{BT}. 
The dichotomy between these two results is striking, and Theorem \ref{prop:classif} is closer in spirit to the one of Habegger and Lin; 
in particular, ribbon torus-links are far from being always link-homotopically trivial. 
Some examples are given in Section \ref{sec:desexemplesdeoufdanstaface}. 

The problem of link-homotopy classification in higher dimension, intiated in \cite{FeRo,RoMa}, has developed in various direction,  
for instance with the construction of homotopy invariants (see e.g. \cite{KoLink,Li,Kirk,RoMa}), 
or towards the relationship with link concordance (e.g. \cite{Coch}). 
Vanishing results of homotopy invariants for embeddings \cite{Coch,RoMa} have led, on one hand, to consider immersions, and on the other hand, to the result of 
Bartels and Teichner for spheres \cite{BT}. 
This also naturally connects to the general study of embedded surfaces in $4$-space, which is a well-developed subject, see e.g. \cite{Casa}. 
Our result suggests that embedded tori in four space form a very particular but interesting case of study, 
which appears as another natural generalization of $1$-dimensional links (as embedded $2$-spheres do).

The proof of the above-mentioned Theorem \ref{prop:classif} follows closely the work of Habegger and Lin \cite{HL2}. 
As such, it involves naturally ribbon tubes and their classification up to link-homotopy, that we now state.

\begin{theo*}(Theorem \ref{th:Iso})
\label{TheoIntro}
There is a group isomorphism between ribbon tubes up to link-homotopy and the group of basis-conjugating automorphisms of the reduced free group.
\end{theo*}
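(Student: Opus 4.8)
The plan is to realize the isomorphism through the natural action of ribbon tubes on the reduced free group $\RFn$, and then to verify four points: that this action descends to link-homotopy classes, that its image consists of basis-conjugating automorphisms, that it is surjective, and that it is injective; the last point is the crux.

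\emph{The action and its target.} From the Wirtinger-type presentation of $\pi_1$ of the complement of a ribbon tube $T$ in $B^3\times[0,1]$ one reads off, for each $T$, the automorphism $\Phi(T)\in\Aut(\RFn)$ defined by $\mu_i\mapsto\ell_i\mu_i\ell_i^{-1}$, where $\mu_1,\dots,\mu_n$ are the standard meridian generators and $\ell_i\in\RFn$ is the reduced $i$-th longitude of $T$, a word free of $\mu_i$. (That $\Phi(T)$ is an automorphism, not merely an endomorphism, is itself part of the setup: it reflects the fact that both the top and the bottom meridian systems generate the reduced link group freely, in Milnor's sense.) Stacking of tubes corresponds to composition of automorphisms --- the longitudes of a stack being the expected products of conjugated longitudes --- so $\Phi$ is a homomorphism from the monoid of ribbon tubes into $\Aut(\RFn)$. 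The reduced free group is the correct target because, following Milnor, letting the $i$-th component cross itself imposes exactly a relation $[\mu_i,w\mu_iw^{-1}]=1$; hence $\Phi$ factors through link-homotopy and the reduced longitudes are link-homotopy invariants. Since each $\mu_i$ is sent to a conjugate of itself, $\Phi$ lands in the group $\AutC(\RFn)$ of basis-conjugating automorphisms.

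\emph{Surjectivity.} The group $\AutC(\RFn)$ is generated by the elementary automorphisms $\chi_{ij}\colon\mu_i\mapsto\mu_j\mu_i\mu_j^{-1}$ ($i\ne j$, other generators fixed); this requires a small separate argument in the spirit of McCool's analysis of the basis-conjugating automorphism group of the free group, adapted to the reduced quotient. Each $\chi_{ij}^{\pm1}$ is realized by an explicit ribbon tube --- the image under the Tube map of the corresponding generator of the pure welded braid group, concretely the tube in which the $i$-th component makes a single passage through the $j$-th --- whose only nontrivial reduced longitude is $\ell_i=\mu_j^{\pm1}$. Stacking such elementary tubes realizes every element of $\AutC(\RFn)$, so $\Phi$ is onto.

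\emph{Injectivity.} Suppose $\Phi(T)=\Id$. Then $\ell_i$ commutes with $\mu_i$ in $\RFn$ for every $i$; as $\ell_i$ is free of $\mu_i$ and the assignment $\ell\mapsto(\mu_i\mapsto\ell\mu_i\ell^{-1})$ is injective on the sub-reduced-free-group spanned by the other meridians (a standard property of $\RFn$, equivalent to the completeness of Milnor's link-homotopy invariants), all longitudes of $T$ vanish. I would then run the induction on $n$ of Habegger--Lin \cite{HL2}. Forgetting the $n$-th component gives a ribbon tube on $n-1$ components whose action on $\RF_{n-1}$ is still trivial, hence link-homotopically trivial by induction; so $T$ may be link-homotoped so that its first $n-1$ components are standard, unknotted and split. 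In that position the $n$-th component is a ribbon tube in the complement of $n-1$ trivial split tubes, and its link-homotopy class rel the rest is recorded by its longitude inside the reduced quotient of that (free, rank $n-1$) fundamental group; as $\ell_n=1$ the core of the $n$-th component is null-homotopic there, and this null-homotopy is thickened to a link-homotopy --- using that self-crossings of the $n$-th component are allowed and that the ambient complement is handlebody-like --- trivializing the last component. Hence $T$ is link-homotopically trivial.

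Finally, a bijective homomorphism from the monoid of link-homotopy classes of ribbon tubes onto the group $\AutC(\RFn)$ forces the source to be a group, so this simultaneously yields the group structure and the claimed isomorphism. The main obstacle is the injectivity step --- and within it the geometric realization: converting the algebraic identity $\ell_n=1$ into a genuine link-homotopy of ribbon tubes without ever leaving the ribbon category, while checking that ``forgetting the last component'' is compatible both with $\Phi$ and with the splitting used in the induction. The most convenient framework for this is the combinatorial model of welded string links modulo self-virtualization, where crossing changes and the relevant moves are explicit; the conclusion is then carried back to ribbon tubes via the Tube map.
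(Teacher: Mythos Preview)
Your overall architecture --- define $\Phi$, check it lands in $\AutC(\RFn)$ and descends to link-homotopy, then prove surjectivity and injectivity --- is sound, and the surjectivity via the McCool-type generators $\chi_{ij}$ is fine. The difficulty is entirely in your injectivity step, and there your Habegger--Lin style induction, as written on the topological side, has two genuine gaps. First, the passage ``hence link-homotopically trivial by induction; so $T$ may be link-homotoped so that its first $n-1$ components are standard'' is a lifting statement: a link-homotopy of the sub-ribbon-tube $T'$ must be realized as a link-homotopy of the full $T$, with the $n$-th component carried along. This is not automatic and you do not justify it. Second, and more seriously, the sentence ``as $\ell_n=1$ the core of the $n$-th component is null-homotopic there'' conflates the reduced free group with the free group: $\ell_n$ vanishes in $\RF_{n-1}$, not in $F_{n-1}$, so the core need not be null-homotopic in the complement. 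What you actually need is precisely that a single ribbon tube in the complement of trivial tubes is classified up to self-crossings by its \emph{reduced} longitude, and that is essentially the $n=1$-relative case of the theorem you are proving; your ``thickening'' sentence does not supply this. You yourself flag that the real work must happen in the welded/Gauss-diagram model, but you do not carry it out.

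The paper does exactly that, and by a route different from your induction. It first proves the purely combinatorial statement $\wGDHn\cong\AutC(\RFn)$ by constructing an explicit inverse: every welded Gauss diagram is $\sa$-equivalent to an \emph{ascending} one (Lemma~\ref{lem:ascending}), and from an element of $\AutC(\RFn)$ one writes down an ascending diagram via the map $\eta$ (well-definedness being Lemma~\ref{lem:mu}); the composite $\fAD$ is then checked to invert $\fDA$. No induction on $n$ is used for injectivity --- the induction on $n$ appears earlier, in Theorem~\ref{th:wGD=wGP}, only to show that every welded Gauss diagram is $\sa$-equivalent to a horizontal one (hence that $\wGDHn$ is a group). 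The transfer to ribbon tubes is then a one-line consequence: $\fDA=\varphi\circ\Tube$ (Lemma~\ref{lem:FVarphi}), $\Tube$ is surjective, $\fDA$ is bijective, hence $\Tube$ is injective up to link-homotopy (Corollary~\ref{cor:TubeInj}) and $\varphi$ is an isomorphism. In particular, the paper's argument is, as the introduction stresses, independent of \cite{HL}; your proposal would instead reprove a 4-dimensional Habegger--Lin, and the missing steps above are precisely where that independent work would have to be done.
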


This theorem calls for several comments, which we develop in the next three paragraphs.

Firstly, this theorem says that ribbon tubes up to link-homotopy form a group, in fact the quotient of the welded pure braid group by self-virtualization. 
Actually, our main results on ribbon tubes are obtained, via the Tube map, as consequences of similar statements for welded string links; 
see Theorems \ref{th:wSLh=wPh} and \ref{th:wSLh=AutC}. 
In this context, the welded diagrammatics gives a faithful description of ribbon tubes, and our results can be thus seen as applications of virtual knot theory 
to the concrete study of topological objects. 

Secondly, the classifying invariant underlying the theorem is a $4$-dimensional version of Milnor numbers. 
Through the Tube map, we obtain a general and topologically grounded extension of Milnor invariants to virtual/welded objects, 
which should be compared with several previous virtual extensions of Milnor invariants proposed in \cite{DK,kotorii,KP}. 

Thirdly, since usual string links sit naturally in the welded string link monoid, 
the above theorem is a generalization of the link-homotopy classification of string links of Habegger and Lin \cite{HL}.
In particular our $4$-dimensional Milnor invariants have the natural feature that they coincide with the classical ones for usual string links 
through the Tube map. However we emphasize here that our proof of Theorem \ref{th:Iso} is completely independent from the one of \cite{HL}.

\saut

Our link-homotopy classification results for ribbon torus-links and ribbon tubes lead to two natural questions. 
The first one is that of the classification in higher dimension (for instance, for $3$-dimensional ribbon tori in $5$-space); 
the algebraic counterpart of Theorem \ref{th:Iso} above suggests that the statement would remain true for codimension two tori in higher dimension. 
The second question addresses the general case of tori in four space, by removing the ribbon assumption; 
this is a natural question which seems to us worth studying. 

\saut

The paper is organized as follows.
We begin by setting some notation in Section \ref{sec:general-setting}.
Section \ref{sec:topology} is devoted to the topological aspects of this paper. We introduce ribbon tubes, broken surface diagrams and link-homotopy in our context, and provide various results on these notions. 
Section \ref{sec:welded-diagrams} focuses on the diagrammatic aspects of the paper; it addresses welded string links and pure braids and their connections with ribbon tubes and configuration spaces.
The main tool for proving most of the results of the present paper is the theory of Gauss diagrams, which is reviewed in details in Section \ref{sec:gauss-diagrams}. 
The main proofs of the paper are given there.
Finally, in Section \ref{sec:milnor}, we define Milnor invariants for ribbon tubes, and show how they provide a natural and general extension of Milnor invariant to virtual knot theory. 

\begin{acknowledgments}
This work was initiated by an inspiring series of lectures given by Dror Bar-Natan at a workshop organized in Caen in June 2012. 
The authors are grateful to him for introducing his work on usual/virtual/welded knotted objects, and for countless fruitful discussions.  
We also warmly thank Akira Yasuhara, Ester Dalvit and Arnaud Mortier for stimulating comments and conversations during the preparation of this paper. 
Thanks are also due to the referee for his/her comments, which helped improving the paper. 
This work is supported by the French ANR research project ``VasKho'' ANR-11-JS01-00201. 
\end{acknowledgments}


\section{General settings and notation}
\label{sec:general-setting}
Unless otherwise specified, we set $n$ to be a non negative integer,
once for all.

\saut

We begin with some topological notation and setting.

Let $I$ be the unit closed interval. 
Let $\UnN$ be the set of integer between $1$ and $n$.
We fix $n$ distinct points $\{p_i\}_{i\in\UnN}$ in $I$. 
For every $i\in\UnN$, we choose a disk $D_i$ in the interior of the
2--ball $B^2=I\times I$ which contains the point $p_i$ in its interior, seen in
$\big\{\frac{1}{2}\big\}\times I$.
We furthermore require that the disks $D_i$, for $i\in\UnN$, are pairwise disjoint.
We denote by $C_i:=\p D_i$ the oriented boundary of $D_i$. 
We consider the 3--ball $B^3:=B^2\times I$ and the 4--ball
$B^4=B^3\times I$. For $m$ a positive integer and for every
submanifold $X\subset B^m\cong B^{m-1}\times I$, we set the notation
\begin{itemize}
\item $\p_0X=X\cap \big(B^{m-1}\times\{0\}\big)$, the ``upper boundary'' of $X$;
\item $\p_1X=X\cap
  \big(B^{m-1}\times\{1\}\big)$, the ``lower boundary'' of $X$;
\item $\p_*X=\p
  X\setminus(\p_0X\sqcup\p_1X)$, the ``lateral boundary'' of $X$; 
\item $\overstar{X}=X\setminus\big(\p_*X\cup\p(\p_0X)\cup\p(\p_1X)\big)$.
\end{itemize}
\noi By a tubular neighborhood of $X$, we will mean an open set $N$
such that $N\cap\mathring{B}^m$
is a tubular neighborhood of $\mathring{X}$ in $\mathring{B}^m$ and
$\p_\e N$ is a tubular neighborhood of $\p_\e X$ in $\p_\e B^m$ for both
$\e=0$ and 1.

In the following, an immersion $Y\subset X$ shall be called \emph{locally
  flat} if and only if
it is locally homeomorphic to a linear subspace $\R^k$ in $\R^m$ for
some positive integers $k\leq m$, except on $\p X$ and/or $\p Y$,
where one of the
$\R$--summand should be replaced by $\R_+$. An intersection $Y_1\cap Y_2\subset X$
shall be called \emph{flatly transverse} if and only if it is locally homeomorphic
to the intersection of two linear subspaces $\R^{k_1}$ and $\R^{k_2}$
in $\R^m$ for
some positive integers $k_1,k_2\leq m$, except on $\p X$, $\p Y_1$
and/or $\p Y_2$, where one of the
$\R$--summand should be replaced by $\R_+$.

\saut

Throughout this paper, and for various types of objects, diagrammatical or topological, we will consider local moves. 
A local move is a transformation that changes the object only inside a ball of the appropriate dimension. 
By convention, we will represent only the ball where the move occurs, and the reader should keep in mind that there is a
non represented part, which is identical for each side of the move.

\saut

We also define some algebraic notation which will be useful throughout
the paper.
 Let $G$ be a group and $a,b\in G$ two of its elements.
  We denote by
  \begin{itemize}
  \item $a^b:=b^{-1}ab$ the conjugate of $a$ by $b$;
  \item $[a;b]:=a^{-1}b^{-1}ab$ the commutator of $a$ and
    $b$;
  \item $\Gamma_k G$, for $k\in\mathbb{N}^*$, the $k^\textrm{th}$ term of the
    lower central serie of $G$ inductively defined by
    $\Gamma_{k+1} G:=\big[G;\Gamma_k G\big]$ and $\Gamma_{1} G=G$;
  \end{itemize}
    and, if $G$ is normally generated by elements
    $g_1,\ldots,g_p$, we further denote by
  \begin{itemize}
  \item
    $RG:=\fract{G}/{\big\{[g_i;g_i^g]\ \big|\ i\in\llbracket1,p\rrbracket,g\in G\big\}}$ the \emph{reduced version of $G$}, which is the smallest quotient where each generator
    commutes with all its conjugates;
  \item $\AutC(G):=\big\{f\in\Aut(G)\ \big|\ \forall
    i\in\llbracket1,p\rrbracket,\exists g\in G,f(g_i)=g_i^g\big\}$, the group of \emph{conjugating automorphisms} of $G$.
  \end{itemize}
Moreover, the free group on $n$ generators is denoted by $\Fn$.
Unless otherwise specified, generators of $\Fn$ will be denoted by
$x_1,\ldots,x_n$.
By abuse of notation, same notation will be kept for their images
under quotients of $\Fn$.
More generally, names of maps and of elements will be kept unchanged when
considering quotients.


\section{Ribbon tubes}
\label{sec:topology}
This section is devoted to the topological counterpart of the paper.
We first define the considered objects, namely ribbon tubes, and then classify them up to link-homotopy, in terms of automorphisms of the reduced free group.

\subsection{Ribbon tubes and their homology}
\label{sec:ribbon-2stringlinks}

In this section, we identify $B^2\hookrightarrow B^3$ with $B^2\times\big\{\frac{1}{2}\big\}$, so that the disks
$\{D_i\}_{i\in\UnN}$ are canonically seen in the interior of $B^3$.

\subsubsection{Definitions}
\begin{defi}\label{def:RibbonTube}
A \emph{ribbon tube} is a locally flat
embedding $T=\psqcup_{i\in\UnN} A_i$ of $n$ disjoint copies of the oriented annulus $S^1\times I$ in
$\loverstar{B^4}{B}$ such that
\begin{itemize}
\item $\p A_i=C_i\times\{0,1\}$ for all $i\in\UnN$ and the
  orientation induced by $A_i$ on $\p A_i$ coincides with that of $C_i$;
\item there exist $n$ locally flat immersed
  $3$--balls $\pcup_{i\in\UnN}B_i$ such that
  \begin{itemize}
  \item $\p_* B_i=\mathring{A}_i$ for all $i\in\UnN$;
  \item $\p_{\e}B_i=D_i\times\{\e\}$ for all $i\in\UnN$ and $\e\in\{0,1\}$;
  \item the singular set of $\pcup_{i=1}^nB_i$ is a disjoint union of so-called
    \emph{ribbon singularities}, \ie flatly transverse disks whose
    preimages are two disks, one in $\pcup_{i=1}^n\mathring{B}_i$ and
    the other with interior in $\pcup_{i=1}^n\mathring{B}_i$,  and
    with boundary essentially embedded in $\psqcup_{i=1}^n\p_*B_i=\psqcup_{i=1}^n\mathring{A}_i$.
  \end{itemize}
\end{itemize}
\end{defi}

We denote by $\rTn$ the set of ribbon tubes up to isotopy fixing the boundary circles. 
It is naturally endowed with a monoidal structure by the stacking product 
$T\bullet T':=T\pcup_{\p_1T=\p_0T'}T'$, and reparametrization, and with unit element the 
\emph{trivial ribbon tube} $\mathbf{1}_n:=\psqcup_{i\in\UnN}C_i\times I$. 

Note that this notion of ribbon singularity is a $4$--dimensional analogue of the classical notion of ribbon singularity introduced by R. Fox in \cite{Fox}.
Similar ribbon knotted objects were studied, for instance, in \cite{Yaji}, \cite{Yana} and \cite{KS}, and a survey can be found in \cite{suzuki}.

Note  that the orientation of the $I$--factor in $S^1\times I$ induces an order on essential curves embedded in a tube component, since they are simultaneously homotopic to $S^1\times \{t\}$ for some 
$t\in I$; we will refer to this order as the ``co--orientation order''.
There is also a second, independent, surface orientation on each tube, which we will not use.  
The reader is referred to Section 3.4 of \cite{WKO2} for a more detailed discussion. 

\begin{remarque}\label{remarquefermeture}
 There are two natural ways to close a ribbon tube $T\in\rTn$ into a closed (ribbon) knotted surface in $4$--space. 
First, by gluing the disks $\psqcup_{i\in\UnN}D_i\times\{0,1\}$ which bound $\partial_0 T$ and $\partial_1 T$, 
and gluing a $4$--ball along the boundary of $B^4$, one obtains an $n$-component ribbon $2$--link \cite{yajima},  
which we shall call the \emph{disk-closure} of $T$. 
Second, by gluing a copy of the trivial ribbon tube $\mathbf{1}_n$ along $T$, 
identifying the pair $(B^3\times \{0\},\partial_0 T)$ with $(B^3\times \{1\},\partial_1 \mathbf{1}_n)$ 
and $(B^3\times \{1\},\partial_1 T)$ with $(B^3\times \{0\},\partial_0 \mathbf{1}_n)$, 
and taking a standard embedding of the resulting $S^3\times S^1$ in $S^4$, 
one obtains an $n$-component ribbon torus--link \cite{Citare},  
which we shall call the \emph{tube-closure} of $T$. 
This is a higher dimensional analogue of the usual braid closure operation.
\end{remarque}

Let us mention here a particular portion of ribbon tube, called \emph{wen} in the literature, that may appear in general, 
and which we shall encounter in the rest of this paper.  
Consider an oriented euclidian circle in three-space, and an additional dimension given by time. 
While time is running, let the circle make a half-turn; this path in $4$--space is a wen.
One can also think of a wen as an embedding in $4$--space of a Klein bottle cut along a meridional circle.
There are several topological types of wens, but it was shown in  \cite{KS} that there are all isotopic in $4$--space, so that we can speak of a wen unambiguously.
Note that a wen is a surface embedded in $4$--space with two boundary components, which do not have the same orientation as is required in the previous definition. 
Hence a wen is not an element of $\rT_1$, but the square of a wen is. 
It turns out that the square of a wen is isotopic to the identity (see \cite{KS}), a fact which will be used in some proofs of this section. 
For a more detailed treatment of wens, see for instance \cite{KS}, \cite[Sec. 2.5.4]{WKO1} and \cite[Sec. 4.5]{WKO2}. 

\begin{defi}\label{def:MonotoneRibbonSLn}
  An element of $\rTn$ is said to be \emph{monotone} if it has a
  representative which is flatly transverse to the lamination
  $\pcup_{t\in I}B^3\times\{t\}$ of $B^4$.\\
  We denote by $\rPn$ the subset of $\rTn$ whose elements are monotone.
\end{defi}
\begin{prop}\label{prop:Abitbol}
  The set $\rPn$ is a group for the stacking product.
\end{prop}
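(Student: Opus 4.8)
The plan is to exploit that $\rTn$ is already a monoid with unit the trivial ribbon tube $\mathbf{1}_n=\psqcup_{i\in\UnN}C_i\times I$: it then suffices to check that $\rPn$ is closed under the stacking product, contains $\mathbf{1}_n$, and that every monotone ribbon tube admits a monotone inverse. For the last point I would imitate the proof that the pure braid group is a group, taking as inverse a suitable ``vertical mirror image''.

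First I would isolate the structural fact that does all the work: \emph{a monotone ribbon tube is the trace of an ambient isotopy of $B^3$ applied to $\psqcup_{i\in\UnN}C_i$}. Indeed, if a representative $T$ is flatly transverse to the lamination $\pcup_{t\in I}B^3\times\{t\}$, then the projection $B^4=B^3\times I\to I$ restricts on each component $A_i$ to a proper submersion sending $\partial A_i$ to $\{0,1\}$; by (the relative version of) Ehresmann's theorem $A_i\to I$ is a trivial $S^1$-bundle, so the slices $T\cap(B^3\times\{t\})$ form a smooth path of embeddings of $\psqcup_{i\in\UnN}S^1$ in $\mathring{B}^3$ from the standard embedding to itself. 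By the isotopy extension theorem this path extends to an ambient isotopy $\{\Phi_t\}_{t\in I}$ of $B^3$, supported away from $\partial B^3$ and with $\Phi_0=\Id$, for which
\[
T=\{\,(\Phi_t(x),t)\ :\ x\in\psqcup_{i\in\UnN}C_i,\ t\in I\,\}.
\]
Conversely the trace of any smooth path of embeddings of $\psqcup_{i\in\UnN}S^1$ in $\mathring{B}^3$ between standard positions is flatly transverse to the lamination, since at each of its points the tangent plane surjects onto the $I$-direction. In particular $\mathbf{1}_n$ is monotone, and the stacking product of two monotone ribbon tubes is represented by the trace of the concatenation of the corresponding two paths, hence is again monotone; so $\rPn$ is a submonoid of $\rTn$.

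Next I would build inverses. For $T\in\rPn$ presented as above, let $\rho\colon B^4\to B^4$ be the reflection $(x,t)\mapsto(x,1-t)$ and put $\overline{T}:=\rho(T)$, with the induced orientations; since $\rho$ preserves the lamination, $\overline{T}\in\rPn$, and the formal identities $\overline{\overline{T}}=T$ and $\overline{T\bullet T'}=\overline{T'}\bullet\overline{T}$ hold. Hence it is enough to prove $T\bullet\overline{T}=\mathbf{1}_n$ in $\rTn$ for all $T\in\rPn$: applying this to $\overline{T}$ then also gives $\overline{T}\bullet T=\mathbf{1}_n$, so $\overline{T}$ is a two-sided inverse. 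Now $\overline{T}=\{(\Phi_{1-t}(x),t):x\in\psqcup_{i}C_i,\ t\in I\}$; fixing a smooth $\lambda\colon I\to I$ with $\lambda(0)=\lambda(1)=0$ and $\lambda(1/2)=1$, increasing on $[0,1/2]$ and decreasing on $[1/2,1]$, the ribbon tube $Z:=\{(\Phi_{\lambda(t)}(x),t):x\in\psqcup_{i}C_i,\ t\in I\}$ restricts on $[0,1/2]$ to a reparametrised squeezed copy of $T$ and on $[1/2,1]$ to one of $\overline{T}$, so $Z$ represents $T\bullet\overline{T}$. I would then exhibit the isotopy $\{F_u\}_{u\in I}$ of $B^4$ given by $F_u(x,t):=(\Phi_{u\lambda(t)}(x),t)$: each $F_u$ is a level-preserving diffeomorphism of $B^4$ and, because $\lambda(0)=\lambda(1)=0$, $\Phi_0=\Id$ and the $\Phi_s$ are supported away from $\partial B^3$, it fixes $\partial B^4$ pointwise and carries $\mathbf{1}_n$ to the trace of $t\mapsto\Phi_{u\lambda(t)}(\psqcup_{i}D_i)$ through \emph{embedded} $3$-balls, hence to a ribbon tube. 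Since $F_0=\Id$ and $F_1(\mathbf{1}_n)=Z$, this is an isotopy through ribbon tubes, fixing the boundary, from $\mathbf{1}_n$ to $T\bullet\overline{T}$; therefore $T\bullet\overline{T}=\mathbf{1}_n$ and $\rPn$ is a group.

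The difficulty is not conceptual but a matter of care, and it is concentrated in the structural fact above: one has to check that slicing a monotone representative really produces a \emph{smooth} isotopy of $\psqcup_{i\in\UnN}C_i$, that $\rho$ genuinely sends ribbon tubes to ribbon tubes — respecting orientations and all the conditions of Definition \ref{def:RibbonTube}, a point where monotonicity conveniently bypasses any direct analysis of wens — and that the reparametrisations needed to make $Z$ a smooth representative of $T\bullet\overline{T}$ (the stacking naively producing a tent-shaped parameter function, to be smoothed to $\lambda$) are harmless. Once this is in place the group axioms are forced, the only idea being the familiar one that running an isotopy out and back is, up to ambient isotopy fixing its endpoints, the constant isotopy; and it is exactly the return condition $\lambda(0)=\lambda(1)=0$ that makes this work even though $\rPn$ itself contains many non-trivial elements.
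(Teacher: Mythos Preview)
Your proof is correct and takes the same approach as the paper: the inverse of a monotone $T$ is its time-reversal, i.e.\ $T^{-1}\cap(B^3\times\{t\}):=T\cap(B^3\times\{1-t\})$. The paper's proof is literally a one-liner that asserts this without further justification, whereas you supply the verification that $T\bullet\overline{T}\simeq\mathbf{1}_n$ via isotopy extension and an explicit contracting isotopy --- more machinery than the paper deploys, but filling in exactly what it leaves implicit.
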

\begin{proof}
  The product of two monotone elements is obviously monotone, and an
  inverse $T^{-1}$ for a monotone ribbon tube $T$ is given
  by $T^{-1}\cap \big(B^3\times\{t\}\big):=T\cap \big(B^3\times\{1-t\}\big)$ for each $t\in I$. 
\end{proof}

\begin{remarque}\label{rem:TBW}
It is worth noting here that two monotone ribbon tubes which are
equivalent in $\rTn$ are always related by a monotone isotopy, i.e. by
an isotopy moving only through monotone objects.
This is shown in Remark \ref{rem:star}.
As a consequence, $\rPn$ is equal to the group defined as the quotient
of monotone ribbon tubes by monotone isotopies.
\end{remarque} 
\begin{remarque}\label{remarquemoisie}
  The monotony condition enables $I$ to be considered as a time
  parameter and the flat transversality forces $T\cap \big(B^3\times\{t\}\big)$
  to be $n$ disjoint circles for all $t\in I$.
  A monotone ribbon tube can hence be seen as an element of the fundamental group of the
configuration space of $n$ circles  in $3$--space, that we will denote by $PR_n$ according to \cite{BH}.
But, since the orientation of $\p T$ is prescribed by the one of
  $\psqcup_{i\in\UnN}C_i$, these elements are actually in the kernel of the
  map $PR_n\to \Z_2^n$ constructed in the proof of Proposition 2.2 from
  \cite{BH}. The group $\rPn$ is therefore isomorphic to the fundamental group of the
configuration space of $n$ circles  in $3$--space lying in parallel planes, denoted by  $PUR_n$ in \cite{BH}.
  This fact can be reinterpreted as seeing monotone ribbon tubes as motions
  of horizontal rings intermingled with wens. Such wens can be pushed above and since the starting
  and final orientations for a given circle match, there is an even
  number of wens and they cancel pairwise.
\end{remarque}

\subsubsection{Homology groups}
\label{sec:Homology}

Let $T$ be a ribbon tube with tube components $\psqcup_{i\in\UnN}A_i$.

Since $T$ is locally flat in $B^4$, there is a unique way, up to
isotopy, to consider, for all $i\in\UnN$, disjoint tubular
neighborhoods $N(A_i)\cong D^2\times S^1\times I$ for $A_i$, with $A_i=\{0\}\times S^1 \times I\subset N(A_i)$.
We denote by $N(T):=\psqcup_{i\in\UnN}N(A_i)$ a reunion of such
tubular neigborhoods and by $W=B^4\setminus \overstar{N}(T)$ the complement of its
interior in $B^4$.

As a direct application of the Mayer--Vietoris exact sequence, we obtain:
\begin{prop}\label{prop:TubeHomology}
  The homology groups of $W$ are $H_0(W)=\Z$,
$$\textrm{$H_1(W)=\Z^n=\Z\big\langle c_i\ \big|\ i\in\UnN\big\rangle$,
  $H_2(W)=\Z^n=\Z\big\langle \tau_i\ \big|\ i\in\UnN\big\rangle$, $H_3(W)=\Z$} $$ and
  $H_k(W)=0$ for $k\geq4$, 
 where $c_i$ is the
  homology class in $H_1(W)$ of $\partial D^2\times \{s\}\times
  \{t\}\subset \p N(A_i)$ for any $(s,t)\in S^1\times I$, and 
  $\tau_i$ of $T$ is the
  homology class in $H_2(W)$ of $\partial D^2\times S^1\times
  \{t\}\subset \p N(A_i)$ for any $t\in I$.
\end{prop}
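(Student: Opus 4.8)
The plan is to apply the Mayer--Vietoris exact sequence to $B^4=W\cup N(T)$, after thickening the two pieces slightly into open sets $U\supset W$ and $V\supset N(T)$ with $U\simeq W$, $V\simeq N(T)$, $U\cup V=B^4$ and $U\cap V$ deformation retracting onto the frontier $W\cap N(T)$ of $N(T)$ in $B^4$. First I would record the three homotopy types entering the sequence. The ball $B^4$ is contractible, so $\widetilde H_*(B^4)=0$. Each tube neighbourhood $N(A_i)\cong D^2\times S^1\times I$ is a trivial disk bundle over the annulus $A_i\cong S^1\times I$, hence deformation retracts onto its core circle; therefore $N(T)=\bigsqcup_{i\in\UnN}N(A_i)$ has $H_0(N(T))\cong H_1(N(T))\cong\Z^n$ (the $H_1$ generated by the cores) and $H_k(N(T))=0$ for $k\geq2$. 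The frontier $W\cap N(T)$ is a disjoint union over $i\in\UnN$ of copies of (essentially) the lateral boundary $\partial_*N(A_i)\cong(\partial D^2)\times S^1\times I$ of a tube neighbourhood, but one must carefully take into account its corner structure along $\partial B^4$ coming from the boundary circles $C_i\times\{0,1\}$ and the $\overstar{N}$ convention in the definition of $W$; computing $H_*(W\cap N(T))$ is the substantive input.

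With these three homology computations available, I would feed them into
\[
\cdots\longrightarrow H_k\big(W\cap N(T)\big)\xrightarrow{\ (\iota_W,\,-\iota_N)_*\ }H_k(W)\oplus H_k\big(N(T)\big)\longrightarrow H_k(B^4)\longrightarrow H_{k-1}\big(W\cap N(T)\big)\longrightarrow\cdots
\]
and exploit $\widetilde H_*(B^4)=0$: for $k\geq2$ the first arrow becomes an isomorphism $H_k(W\cap N(T))\cong H_k(W)\oplus H_k(N(T))$, which gives at once $H_k(W)=0$ for $k\geq4$ and determines $H_2(W)$ and $H_3(W)$ from $H_*(W\cap N(T))$; for $k=0,1$ the tail of the sequence, together with the connectedness of $W$ and the fact that each meridional circle maps isomorphically onto $H_1$ of its component of $N(T)$, gives $H_0(W)=\Z$ and $H_1(W)=\Z^n$. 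Reading off yields the stated groups. It then remains to match the generators with the geometry: the meridian $c_i$ is the image under $(\iota_W)_*$ of the class in $H_1(W\cap N(T))$ of the normal circle $\partial D^2\times\{s\}\times\{t\}\subset\partial N(A_i)$, which is null-homologous inside $N(A_i)$ (it bounds a meridional disk), so under the isomorphism $H_1(W\cap N(T))\cong H_1(W)\oplus H_1(N(T))$ it projects onto a generator of the $H_1(W)$-summand; similarly $\tau_i$ is the image of the meridional torus $\partial D^2\times S^1\times\{t\}\subset\partial N(A_i)$ in $H_2(W)$.

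Everything after the computation of $H_*(W\cap N(T))$ is formal: exactness, diagram chasing, and the splitting of short exact sequences of finitely generated free abelian groups. The step I expect to be the main obstacle is that computation --- in particular, getting right the corner structure of $\partial N(A_i)$ along $\partial B^4$, i.e. exactly how the cylinder $(\partial D^2)\times S^1\times I$ interacts with the removed open normal solid tori $\partial_\varepsilon\overstar{N}(A_i)$ over $C_i\times\{0\}$ and $C_i\times\{1\}$ --- and then verifying that it produces precisely the announced ranks, notably that the top-degree contribution collapses to a single $\Z$ rather than an $n$-fold sum.
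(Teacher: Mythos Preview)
Your approach --- Mayer--Vietoris on $B^4=W\cup N(T)$ --- is exactly the paper's; its entire proof is the sentence ``as a direct application of the Mayer--Vietoris exact sequence''.

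The worry you flag is a red herring. However you parse the corner conventions, the pair $U=B^4\setminus T$, $V=N(T)$ has $U\cap V=N(T)\setminus T\cong\bigsqcup_{i\in\UnN}(D^2\setminus\{0\})\times S^1\times I$, which deformation retracts fibrewise to $\bigsqcup_{i\in\UnN}\partial D^2\times S^1\times I\simeq\bigsqcup_{i\in\UnN}T^2$; hence $H_*(U\cap V)$ is $\Z^n,\Z^{2n},\Z^n,0,0,\ldots$ with no delicate boundary analysis needed. Feeding this, together with $N(T)\simeq\bigsqcup_{i\in\UnN} S^1$ and $\widetilde H_*(B^4)=0$, into the long exact sequence gives $H_k(W)\oplus H_k(N(T))\cong H_k(U\cap V)$ for all $k\geq1$, whence $H_0(W)=\Z$, $H_1(W)=H_2(W)=\Z^n$ with the stated generators, and $H_k(W)=0$ for $k\geq3$.

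In particular the computation yields $H_3(W)=0$, not $\Z$: the value $H_3(W)=\Z$ in the statement appears to be a misprint, and you should not try to conjure a single $\Z$ in degree~$3$ out of the corner structure. A quick sanity check is the Euler characteristic: $\chi(W)=\chi(B^4)-\chi(N(T))+\chi(U\cap V)=1-0+0=1$, whereas the groups as printed would give $1-n+n-1=0$. This discrepancy is harmless for the rest of the paper, which only uses the $H_1$ and $H_2$ computations (to invoke Stallings' theorem in Section~\ref{sec:RedFundGr}).
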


\subsection{Broken surface diagrams}
\label{sec:broken-surfaces}
Links in $3$--space can be described using diagrams, which are their generic projection onto a $2$--dimensional plane with extra decoration encoding the $3$--dimensional information. 
Similarly, it turns out that ribbon knotted objects, which are surfaces in $4$--space, can be described using their generic projection onto a $3$--space; this leads to the following notion of broken surface diagram.

\begin{defi}
A \emph{broken surface diagram} is a locally flat immersion $S$ of $n$ oriented annuli $\psqcup_{i\in\UnN}A_i$ in $\loverstar{B^3}{B}$ such that
\begin{itemize}
\item $\p A_i=C_i\times\{0,1\}$ for all $i\in\UnN$ and the
  orientation induced by $A_i$ on $\p A_i$ coincides with that of $C_i$;
\item the set $\Sigma(S)$ of connected components of singular points in $S$ consists of flatly
  transverse circles in $\pcup_{i=1}^n\mathring{A}_i$, called \emph{singular circles}.
\end{itemize}
Moreover, for each element of $\Sigma(S)$, a local ordering is given on the two circle preimages. 
By convention, this ordering is specified on pictures by erasing a small neighborhood in $\pcup_{i=1}^n\mathring{A}_i$ of the lowest preimage
 (see Figure \ref{fig:SingCir}).  Note that this is the same convention which is used for usual knot diagrams. 
\end{defi}

\begin{figure}[!h]
\[
\xymatrix@R=-1.2cm{
&\dessin{3cm}{SingCir_1}\hspace{.5cm}:\hspace{-.8cm}&
\textrm{\begin{tabular}{c}dark\\preimage\end{tabular}}
<\textrm{\begin{tabular}{c}light\\preimage\end{tabular}}
\\
\dessin{3cm}{SingCir_0}\ar[ur]\ar[dr]&&\\
&\dessin{3cm}{SingCir_2}\hspace{.5cm}:\hspace{-.8cm}&
\textrm{\begin{tabular}{c}light\\preimage\end{tabular}}
<\textrm{\begin{tabular}{c}dark\\preimage\end{tabular}}
}
\]
\caption{Local pictures for a singular circle in a broken surface
  diagram}
\label{fig:SingCir}
\end{figure}
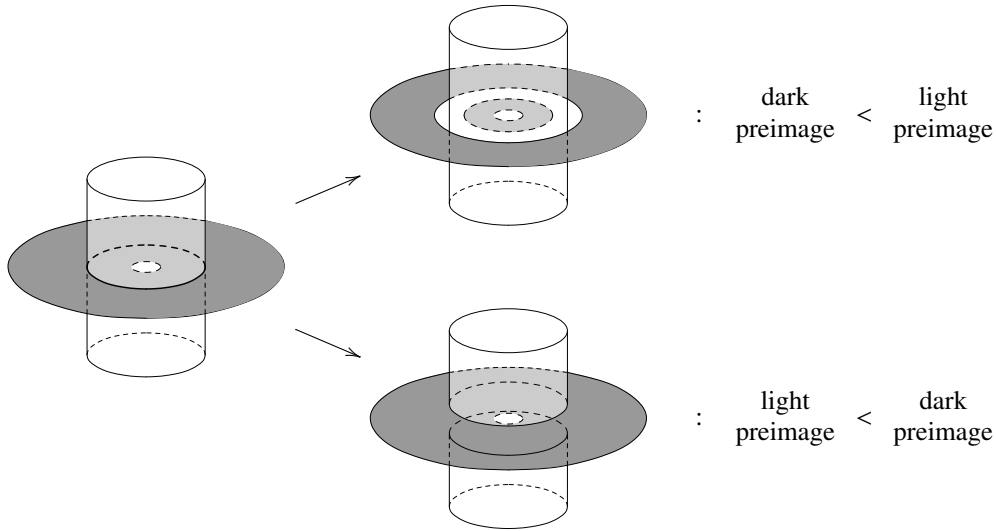

\begin{figure}
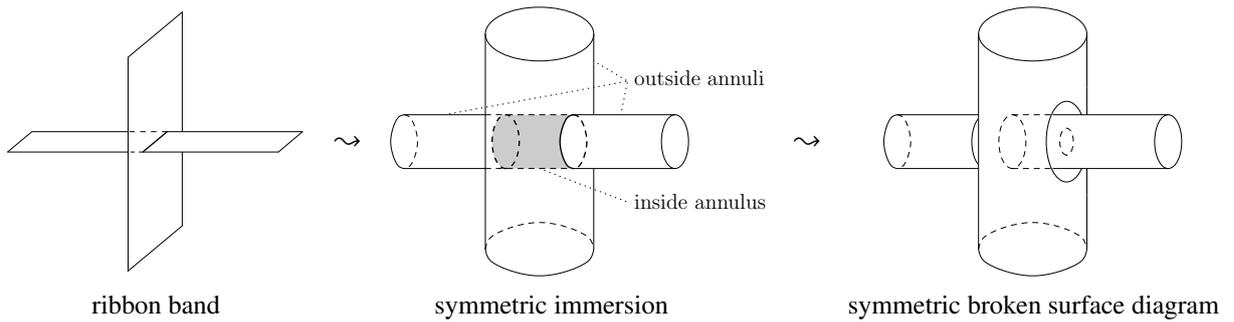

  \[
  \hspace{-.5cm}
  \begin{array}{ccccc}
    \dessin{4cm}{RibBand}&\leadsto& \dessin{4cm}{SymImm} &\leadsto&\dessin{4cm}{SymBrokSurfDiag}\\
    \textrm{ribbon band} && \textrm{symmetric immersion} \hspace{.7cm}&& \textrm{symmetric broken surface diagram}
  \end{array}
\]
  \caption{From ribbon bands to symmetric broken surface diagrams}
  \label{fig:RibbonBands}
\end{figure}

We shall actually consider only a special case of broken surface
diagrams, namely symmetric ones, that we now introduce, following essentially Yajima \cite{Yaji}.

\begin{defi}
  A \emph{ribbon band} is a locally flat immersion $F:=\psqcup_{i\in\UnN}F_i$ of $n$ disks $I\times I$ in $\loverstar{B^3}{B}$ such that
  \begin{itemize}
  \item $\p_\e F_i=I\times\{\e\}$ is a diameter for $D_i\subset B^2\times\{\e\}$ for each $\e\in\{0,1\}$ and each $i\in\UnN$;
  \item the singular set of $\pcup_{i=1}^n F_i$ is a disjoint union of 2--dimensional ribbon singularities, \ie flatly transverse intervals whose preimages are two intervals, one in $\pcup_{i=1}^n\mathring{F}_i$ and
    the other with interior in $\pcup_{i=1}^n\mathring{F}_i$  and
    with boundary intersecting distinct connected components of $\p_*F$.
  \end{itemize}
A \emph{symmetric immersion}  is a locally flat immersion of $n$ oriented annuli obtained by taking the boundary of 
a thickening of a ribbon band. 
\end{defi}
See the left-hand side and middle of Figure \ref{fig:RibbonBands}, respectively, for an example of ribbon band and symmetric immersion. 
\begin{remarque}\label{rk:inside-outside}
  A symmetric immersion is naturally endowed with a notion of ``inside'', which is simply the 
  thickening of the corresponding ribbon band. 

As pictured in Figure \ref{fig:RibbonBands}, singularities in a symmetric immersion $H$ are singular circles with exactly one essential preimage in $H$. These essential preimages cut $H$ into smaller annuli. Moreover, the singular circles come in pairs, such that the associated essential preimages delimit an annulus which is entirely contained inside $H$. Such annuli are called \emph{inside annuli}, and those which are not are called \emph{outside annuli}. See Figure \ref{fig:RibbonBands}.   
Note that every non essential preimage of a singularity belongs to an outside annulus. 
\end{remarque}

\begin{defi}
  A broken surface diagram $S$ is said to be \emph{symmetric} if it is a symmetric immersion such that,  
  for each inside annulus, the local orderings between the essential and non essential preimages of its two boundary circles are different.  
  See the right-hand side of  Figure \ref{fig:RibbonBands} for an illustration.  
\end{defi}

Let $T$ be a ribbon tube, and suppose that there is a projection $B^4\to B^3$ such that the image of $T$ in $B^3$ has singular locus a union of flatly transverse circles. 
For each double point, the preimages are naturally ordered by their positions on the projection rays, as in Figure \ref{fig:SingCir}, so that the projection naturally yields a broken surface diagram. 
This suggests that broken surface diagrams can be thought of as $3$--dimensional representations of ribbon tubes. 
This is indeed the case, as stated in the next result, which is essentially due to Yanagawa. 
\begin{lemme} \cite{Yana}
Any ribbon tube admits, up to isotopy, a projection to $B^3$ which is a broken surface diagram.
Conversely any broken surface diagram is the projection of a unique 
ribbon tube. 
\end{lemme}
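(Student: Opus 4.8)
The plan is to prove the two assertions separately: that every ribbon tube can be ``flattened'' so as to project onto a broken surface diagram, and that, conversely, every broken surface diagram ``inflates'' to a unique ribbon tube. For the first half I would start from a ribbon tube $T=\sqcup_i A_i$ together with its immersed $3$--balls $\cup_i B_i$ from Definition~\ref{def:RibbonTube}, and work with the vertical projection $p\colon B^4=B^3\times I\to B^3$. The key preliminary move is to isotope $\cup_i B_i$ (keeping $\partial_\e B_i=D_i\times\{\e\}$ fixed) into a thin slab $B^3\times(\tfrac12-\e,\tfrac12+\e)$, arranged so that it is transverse to the fibres $\{x\}\times I$ away from the ribbon disks, while near each ribbon disk it realizes the standard local model in which one sheet runs monotonically in the $I$--direction through the interior of the other; this is possible precisely because each $B_i$ is a $3$--ball whose only singularities are ribbon disks. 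Restricting to the boundary annuli $T=\cup_i\mathring A_i$, transversality to the fibres makes $p|_T$ an immersion, so that no branch point occurs; the local model near the ribbon disks forces the singular set of $p(T)$ to consist only of flatly transverse circles, with no triple points; and the $I$--coordinate records, at each such circle, the ordering of the two preimages, which is exactly the decoration of a broken surface diagram and is compatible with the ``erased neighbourhood'' convention. After making $p$ generic on the remaining, embedded part of $T$ (which only creates further double circles carrying their natural ordering), this yields the desired broken surface diagram — and, keeping track of orientations, one can even arrange it to be symmetric, following Yanagawa.

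For the converse, given a broken surface diagram $S=\cup_i A_i$ viewed inside $B^3=B^3\times\{\tfrac12\}\subset B^4$, I would inflate it: at each singular circle, erase a small bicollar of the lower preimage (the one specified in Figure~\ref{fig:SingCir}) and replace it by a parallel bicollar pushed slightly into $B^3\times(\tfrac12-\e,\tfrac12)$. Since the singular circles are disjoint, this produces a locally flat \emph{embedded} surface $\tilde S=\cup_i\tilde A_i$ in $\loverstar{B^4}{B}$ with $\partial\tilde A_i=C_i\times\{0,1\}$. To see that $\tilde S$ is a ribbon tube, I would fill each annulus $A_i$ by the immersed $3$--ball it cobounds in $B^3$ with $D_i\times\{0,1\}$, and drag these $3$--balls along with the inflation; the resulting immersed $3$--balls $B_i$ satisfy $\partial_*B_i=\mathring{\tilde A}_i$ and $\partial_\e B_i=D_i\times\{\e\}$, and their only singularities are the disks swept by a pushed-down under-sheet inside the filling $3$--ball of its over-sheet, that is, ribbon singularities. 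Uniqueness then follows because the inflation is canonical up to isotopy — the bicollar widths and the filling $3$--balls are unique up to isotopy, and the vertical ordering at each circle is forced by the diagram — and conversely any ribbon tube projecting onto $S$ must agree with this local model near every singular circle, hence be isotopic to $\tilde S$.

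The hard part will be the flattening step in the first half: a generic projection of an arbitrary embedded surface in $B^4$ exhibits branch points and triple points that cannot in general be removed, and it is exactly the ribbon hypothesis — the existence of filling $3$--balls with only ribbon singularities — that allows $T$ to be pushed into a thin slab transverse to the vertical fibres, up to a standard local model near the ribbon disks. Establishing this carefully, together with checking in the converse that the filling $3$--balls can genuinely be built with ribbon singularities only, is where essentially all the content lies; the remaining arguments are general position and the bookkeeping of orderings via the $I$--coordinate, and the overall structure follows Yanagawa~\cite{Yana}.
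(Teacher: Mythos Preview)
Your approach differs from the paper's, which simply cites Yanagawa's work \cite{Yana} and observes that his arguments are local, hence carry over from ribbon $2$--spheres to ribbon tubes. Crucially, Yanagawa does \emph{not} argue directly as you propose: he establishes the equivalence between ``ribbon'' (property $R(4)$) and ``admits a projection with only transverse double points'' (property $R(3)$) by passing through a third property $F$, namely being a \emph{fusion} of a trivial $2$--link. The route is $R(4)\Leftrightarrow F$ (his Lemma 4.3), $F\Rightarrow R(3)$ (his Corollary 3.3), and $R(3)\Rightarrow F$ (his Lemma 3.4).

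Your forward direction is broadly reasonable, though the ``thin slab'' isotopy of the immersed $3$--balls is essentially the content of $R(4)\Rightarrow F\Rightarrow R(3)$ and would need real work to make precise. The genuine gap is in your converse. You write that you would ``fill each annulus $A_i$ by the immersed $3$--ball it cobounds in $B^3$ with $D_i\times\{0,1\}$''; but for a \emph{general} broken surface diagram there is no reason such an immersed $3$--ball exists, and even if one can extend the immersed $2$--sphere $A_i\cup(D_i\times\{0,1\})$ to a map of a $3$--ball (which follows only from $\pi_2(B^3)=0$, giving merely a continuous extension), there is no control on its singular set. The whole difficulty of $R(3)\Rightarrow R(4)$ is precisely to produce fillings with \emph{ribbon} singularities, and this is why Yanagawa interposes the fusion property: a surface whose projection has only double circles can be cut along them into trivially embedded pieces, exhibiting it as a fusion of a trivial link, and fusion provides the ribbon fillings canonically. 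Your sketch treats this as a verification (``checking \ldots\ that the filling $3$--balls can genuinely be built with ribbon singularities only''), but it is in fact the substance of the converse and is not addressed by your construction.
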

\begin{proof}
Yanagawa proves that  for locally flat embeddings of $2$ spheres in
$\mathbb{R}^4$, there is an equivalence between the property of being
ribbon (property $R(4)$ in \cite{Yana}) and the property of admitting
a projection onto an immersion whose only singular points are
transverse double points (property $R(3)$ in \cite{Yana}). The proof
of the equivalence passes through the equivalence with a third notion, 
which is the fusion of a trivial $2$--link (property $F$ in
\cite{Yana}). Lemma 4.3 in \cite{Yana} proves that $R(4)$ and $F$ are
equivalent. Corollary 3.3 in \cite{Yana} states that $F$ implies $R(3)$ and
Lemma 3.4 states the
converse implication. All the arguments are local and
apply to the case of ribbon tubes as well.
\end{proof}

More specifically, we have the following.
\begin{lemme} \cite{Yaji,KS}\label{lem:symmetric}
Any ribbon tube can be represented by a symmetric broken surface diagram.
\end{lemme}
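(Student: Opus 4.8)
The statement to prove is Lemma~\ref{lem:symmetric}: every ribbon tube can be represented by a \emph{symmetric} broken surface diagram. The plan is to start from an arbitrary broken surface diagram $S$ of a given ribbon tube $T$, which exists by the previous lemma, and to deform it into a symmetric one without changing the underlying ribbon tube. The key observation is that the defining data of a ribbon tube is really a collection of immersed $3$--balls $B_i$ with ribbon singularities, and that such a $3$--ball deformation retracts onto a spine which is a ribbon band in the sense of Definition~\ref{def:MonotoneRibbonSLn} above; thickening that band and taking its boundary produces a symmetric immersion, and a careful choice of local orderings at the singular circles produces a \emph{symmetric} broken surface diagram. So the proof is essentially a bookkeeping translation between the three pictures in Figure~\ref{fig:RibbonBands}, read from left to right.

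First I would recall that, for a ribbon tube $T = \psqcup_i A_i$, the immersed $3$--balls $\psqcup_i B_i$ whose existence is guaranteed by Definition~\ref{def:RibbonTube} have singular set a disjoint union of ribbon singularities: flatly transverse disks, one preimage interior to $\pcup_i \mathring B_i$ and one ``passing through''. Each $B_i$ collapses onto a $2$--dimensional spine obtained by choosing, near each ribbon disk, a collapsing direction; the union of these spines is exactly a ribbon band $F = \psqcup_i F_i$ in the sense defined just above Remark~\ref{rk:inside-outside}, with the $4$--dimensional ribbon disks descending to $2$--dimensional (interval) ribbon singularities. Since $A_i = \p_* B_i = \mathring A_i$ is the lateral boundary of $B_i$, and $B_i$ is a thickening (in $B^3$, after projection) of $F_i$, the annulus $A_i$ is recovered, up to isotopy, as the boundary of a thickening of the ribbon band $F$ — that is, as a symmetric immersion. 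This is the content of the left-to-middle arrow in Figure~\ref{fig:RibbonBands}, and it shows every ribbon tube is represented by a symmetric \emph{immersion}; the input here is precisely Yajima's argument \cite{Yaji}, adapted to tubes exactly as in the previous lemma (all constructions being local).

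It then remains to upgrade the symmetric immersion to a symmetric \emph{broken surface diagram}, i.e. to specify the local orderings at each singular circle so that the symmetry condition of the last definition holds. By Remark~\ref{rk:inside-outside}, the singular circles of a symmetric immersion come in pairs, each pair bounding an inside annulus; at each such circle exactly one preimage is essential. The ordering datum to be chosen is whether the essential or the non-essential preimage is ``lower''. The natural choice, dictated by the ribbon structure — the non-essential preimage sits on an \emph{outside} annulus and should be thought of as passing \emph{through} the inside, hence should be declared the \emph{lower} sheet on one side and consistency then forces the opposite on the other boundary circle of the inside annulus — makes the two local orderings at the two boundary circles of every inside annulus different, which is exactly the symmetry condition. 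One checks, again locally, that with this choice the resulting broken surface diagram projects back (via the previous lemma) to the ribbon tube $T$ we started with, because reversing a local ordering at a singular circle corresponds to the ribbon-move ambiguity that does not affect the ribbon tube. I would cite \cite{Yaji} and \cite{KS} for the verification that the square-of-a-wen phenomenon and the inside/outside dichotomy make this choice globally consistent.

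The main obstacle is the global consistency of the ordering choice: locally one always has the freedom to declare either sheet lower, but one must verify that the ``non-essential preimage is lower'' prescription can be made simultaneously at all singular circles so that, for each inside annulus, its two boundary circles genuinely receive \emph{opposite} orderings — there is no monodromy obstruction around the annulus. This is where the structural fact from \cite{KS} that inside annuli are honest annuli contained in $H$ (rather than, say, Möbius-type or wen-type pieces, whose squares are needed to trivialize) is used: it guarantees the two boundary circles are unambiguously distinguishable and the ordering can be propagated along the inside annulus without contradiction. Modulo that input, which is exactly what \cite{Yaji} and \cite{KS} supply and which is quoted in the statement, the rest is the routine local translation sketched above, and I would present the proof as essentially a pointer to those references together with the explicit description of the ordering choice.
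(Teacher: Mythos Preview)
Your approach is quite different from the paper's and contains real gaps. The paper does not attempt to extract a ribbon band directly from the $4$--dimensional filling. Instead, it takes the disk-closure of $T$ to obtain a ribbon $2$--link, invokes Theorem~5.2 of \cite{KS} to transform its diagram into the closure of a symmetric one by local moves, and then reopens. The delicate point is that some of these moves may push wens across the closing disks; rather than perform those moves, the paper leaves the wens near the boundary, pushes them all down to the bottom circles, and then uses the orientation constraint on $\partial T$ to conclude that there is an even number of wens near each bottom circle, so that they cancel pairwise.

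Two steps in your argument fail. First, the passage from the $3$--balls $B_i\subset B^4$ to a ribbon band $F\subset B^3$ is not justified: collapsing $B_i$ onto a spine produces an immersed $2$--disk in $B^4$, and your parenthetical ``in $B^3$, after projection'' hides the entire difficulty --- a generic projection of such a surface to $B^3$ has double curves, not the isolated interval singularities required of a ribbon band, and you give no argument controlling this. Second, and more seriously, your claim that ``reversing a local ordering at a singular circle \ldots\ does not affect the ribbon tube'' is false: changing the local ordering is precisely a circle crossing change (Figure~\ref{fig:BS_LocalMove}, Proposition~\ref{def:CircleChange}), which in general alters the isotopy class of the tube --- indeed this is what generates link-homotopy. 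Even granting a symmetric immersion, the orderings are not yours to choose; they are dictated by the actual position of $T$ in $B^4$, and you have not shown that those forced orderings satisfy the symmetry condition. Your treatment of wens is also off target: they are not a monodromy obstruction around inside annuli, but honest local pieces that may occur in a broken surface diagram of a tube, and eliminating them is exactly what requires the closure-and-parity argument the paper gives.
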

\begin{proof}
 By the previous lemma, any ribbon tube $T$ can be represented by a
 broken surface diagram $S$. Now we prove that $S$ can be transformed into a symmetric broken
 surface diagram which still represents the same ribbon tube.
 For this, we consider the disk--closure $S$ of $T$ as defined in Remark
 \ref{remarquefermeture}.
 We obtain a ribbon diagram for an $n$--component ribbon $2$-link.
 Now by Theorem 5.2 in \cite{KS}, this diagram is equivalent to a diagram
 obtained by closing a symmetric broken surface diagram.
 This equivalence is generated by local moves, which we may assume to
 avoid a neighborhood of the closing disks \emph{except}
 possibly for a finite number of moves which consist in discarding some wens
 accross closure disks; we do not perform these latter moves, and leave such wens
 near the boundary circles instead.
 We obtain in this way a new broken surface diagram which describes a ribbon tube isotopic to $T$,
 and it only remains to get rid of all residual wens.
 Following again \cite{KS}, all wens can be pushed down
 near the bottom circles $\pcup_{i=1}^n C_i\times \{0\}$.
 The orientations of the two boundary
 circles of a wen are opposite, and the orientation on $\p T$ induced
 by $T$ must agree with that of $\psqcup_{i\in\UnN}C_i$,
 so there are an even number of wens near each bottom circle, and
 they cancel pairwise. There is thus no more wen and the resulting
 broken surface diagram for $T$ is symmetric.
\end{proof}

\begin{remarque} 
  Since we are only interested in broken surface diagrams which represent ribbon tubes, the definition given in this paper is
  watered down compared to what is commonly used in the literature. 
  Let us recall that in general, a generic projection of an embedded surface in $4$--space onto a $3$--space has three types of singularities: 
  double points, triple points and branching points (see, for instance, \cite{Casa}, \cite{Yana}).  
  In our context, only the first type does occur.
\end{remarque}

\begin{remarque}
  If two symmetric broken surface diagrams
  differ by one of the ``Reidemeister'' (RI, RII, RIII) or ``virtual'' (V) moves shown in
  Figure \ref{fig:BS_ReidMoves}, then the associated ribbon tubes are
  isotopic. 
\begin{figure}[!h]
\[
\textrm{RI:}\xymatrix{\dessin{3.3cm}{BS_R1_1}\ar@{^<-_>}[r]&\dessin{3.3cm}{BS_R1_2}}
\hspace{1cm}
\textrm{RII:}\xymatrix{\dessin{3.3cm}{BS_R2_1}\ar@{^<-_>}[r]&\dessin{3.3cm}{BS_R2_2}}
\]
\[
\textrm{RIII:}\xymatrix{\dessin{3.3cm}{BS_R3_1}\ar@{^<-_>}[r]&\dessin{3.3cm}{BS_R3_2}}
\hspace{1cm}
\textrm{V:}\xymatrix{\dessin{3.3cm}{V_1}\ar@{^<-_>}[r]&\dessin{3.3cm}{V_2}}
\]
  \caption{Reidemeister and virtual moves for broken diagrams}
  \label{fig:BS_ReidMoves}
\end{figure}
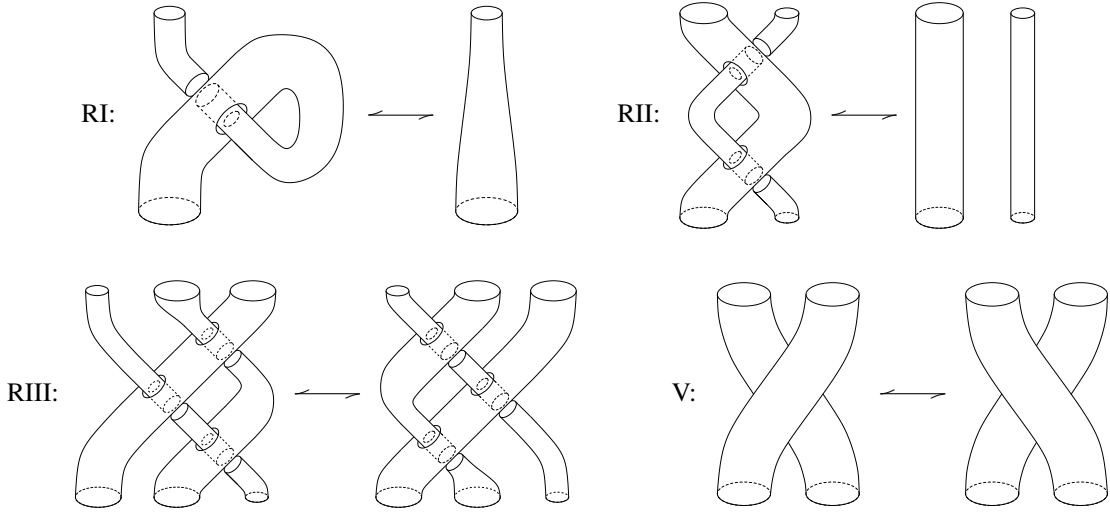
  Indeed, both sides of virtual and Reidemeister RII \& RIII moves can be locally modelized as
  occurring in the projection of monotone ribbon tubes. Then, one can
  use the approach mentioned in Remark \ref{remarquemoisie} to prove the statement. 
  Reidemeister move RI is more tricky but can, for example, be checked using Roseman moves defined in \cite{roseman}.
  Another approach can also be found in \cite{Jess}.
  Nevertheless, it is still unknown whether the correspondence between
  ribbon tubes and symmetric broken surface diagrams up to Reidemeister and virtual moves is
  one-to-one; see  e.g.  \cite[Sec. 3.1.1]{WKO1} or \cite[Sec. 3.1]{WKO2}. 
  This discussion is the key to the relation between ribbon tubes and welded string links defined in Section \ref{sec:welded-diagrams}.
  However, it follows from Corollary \ref{cor:TubeInj} that this correspondence is one-to-one up to the link-homotopy relation which is defined in the next section.
  \label{rk:Reidbrok}
\end{remarque}

\subsubsection{Fundamental group}\label{sec:fun-da-mental}

Let $T$ be a ribbon tube with tube components
$\psqcup_{i\in\UnN}A_i$ and consider $N(T):=\psqcup_{i\in\UnN}N(A_i)$ and
$W=B^4\setminus \overstar{N}(T)$ as in Section \ref{sec:Homology}.
We also consider a global parametrization $(x,y,z,t)$ of
$B^4$, which is compatible with $B^4\cong B^3\times I\cong B^2\times
I\times I$ near $\p_0B^4$ and $\p_1B^4$, and such
that the projection along $z$ maps $T$ onto a symmetric broken
surface diagram $S$.
We also fix a base
point $e:=(x_0,y_0,z_0,t_0)$ with $z_0$ greater than the
highest $z$--value taken on $N(T)$.
\begin{nota}
  We set $\pi_1(T):=\pi_1(W)$ with base point $e$.
\end{nota}

\paragraph{\it Meridians and longitudes}

For every point $a:=(x_a,y_a,z_a,t_a)\in T$, we define
$m_a\in\pi_1(T)$, the \emph{meridian around $a$}, 
as $\tau^{-1}_a\gamma_a\tau_a$ where
\begin{itemize}
\item $\tau_a$ is the straight path
from $e$ to $\widetilde{a}:=(x_a,y_a,z_0,t_a)$;
\item $\gamma_a$ is the loop in $W$, unique up to isotopy, based at
$\widetilde{a}$ and which enlaces positively $T$ around $a$.
\end{itemize}

\begin{defi}
  For each $i\in\UnN$ and $\e\in\{0,1\}$, we denote by $m_i^\e$ the meridian
  in $\pi_1(T)$ defined as $m_{a_i^\e}$ for any $a_i^\e\in
  C_i\times\{\e\}$.
  If $\e=0$, we call it a \emph{bottom meridian}, and if $\e=1$, we
  call it a \emph{top meridian}.
\end{defi}
See the left-hand side of Figure \ref{fig:longitudemeridien} for an example of a bottom meridian.

Note that, for any $\e\in\{0,1\}$ and any choice of $a_i^{\varepsilon}$, the fundamental group of $\p_\e W$
based at $(x_0,y_0,z_0,\e)$ can be identified with the free group $\Fn=\big\langle
m^\e_i\ \big|\ i\in\UnN\big\rangle$.

\begin{figure}[h!]
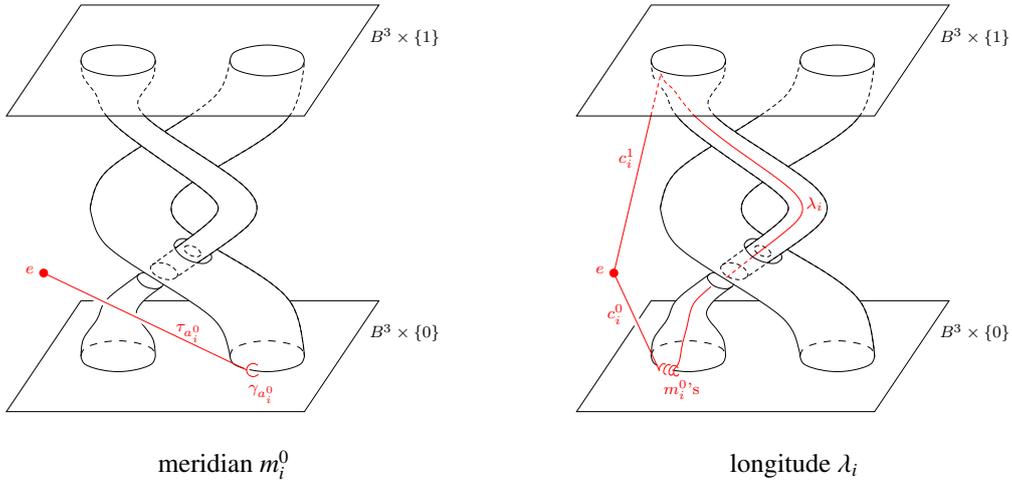

  \[
  \begin{array}{ccc}
    \dessin{5.5cm}{mi0_ExampleA}&\hspace{1cm}&\dessin{5.5cm}{Longitude_ExampleA}\\[3cm]
   \textrm{meridian $m_i^0$}&&\textrm{longitude $\lambda_i$}
  \end{array}
  \]
  \caption{Examples of meridians and longitude}
 \label{fig:longitudemeridien}
\end{figure}

\saut

Now, we define the notion of longitude for $T$ as follows. 
First, we fix two points $e_i^0\in\p_0N(A_i)$ and $e_i^1\in\p_1N(A_i)$ on each extremity of the boundary of the tubular neighborhood of $A_i$. 
A longitude for $A_i$ is defined as the isotopy class of an arc on $\p N(A_i)$ running from $e^0_i$ to $e^1_i$.  
Since $N(A_i)$ is homeomorphic to $D^2\times S^1\times I$, we note that
\[
\p N(A_i)=\big(S^1\times S^1 \times I\big)\cup \big(D^2 \times S^1
\times\{0\}\big)\cup\big(D^2 \times S^1 \times\{1\}\big),
\]
\noi so that the choice of a longitude for $A_i$ is {\it a priori} specified by two coordinates, one for each of the two $S^1$--factors in $S^1 \times S^1\times I$.
On one hand, the first $S^1$--factor is generated by the meridian $m_i^0$, so that the first coordinate is given by the linking number with the tube component $A_i$.
It can be easily checked, on the other hand, that two choices of longitude for $A_i$ which only differ by their coordinate in the second $S^1$--factor are
actually isotopic in $W$. 
\begin{defi}
 For each $i\in\UnN$, an \emph{$i^\textrm{th}$ longitude of $T$} is the isotopy class of an arc on $\p N(A_i)$, running from $e^0_i$ to
$e^1_i$, and closed into a loop with an arc $c_i^0\cup c_i^1$ defined as follows. 
For $\e\in\{0,1\}$, we denote by $\widetilde{e}_i^\e$ the point above
$e_i^\e$ with $z$--cordinate $z_0$; then $c_i^\e$ is the broken line between $e$, $\widetilde{e}_i^\e$ and $e_i^\e$. 
\end{defi}
This definition is illustrated on the right-hand side of Figure \ref{fig:longitudemeridien}. 
\begin{remarque}\label{rk:preflong}
As explained above, any two choices of an $i^\textrm{th}$ longitude differ by a power of $m_i^0$, 
which is detected by the linking number with the tube component $A_i$.
In particular, there is thus a \emph{preferred $i^\textrm{th}$ longitude}, which is defined as having linking number zero with $A_i$. 
We shall not make use of this fact here, but in Section \ref{sec:milnor} at the end of this paper. 
\end{remarque}

\paragraph{\it Wirtinger presentation}

In the following, we give a presentation for $\pi_1(T)$ in terms of broken surface diagrams.

Let $S$ be a symmetric broken surface diagram representing $T$.  
According to the notation set in Remark \ref{rk:inside-outside}, we denote by
$\Out(S)$ the set of outside annuli of $S$ and by $\In(S)$ the set of inside
annuli.

For each inside annulus $\beta\in\In(S)$, we define
\begin{itemize}
\item $\alpha^0_\beta\in\Out(S)$, the outside annulus which contains
  $\p \beta$;
\item $C^+_\beta,C^-_\beta$, the connected components of
  $\beta\cap\alpha^0_\beta$ which, respectively,  are closer to $\p_1
  S$ and to $\p_0S$, according to the co--orientation order (defined after Definition \ref{def:RibbonTube}); 
\item $\alpha^+_\beta,\alpha^-_\beta\in\Out(S)$, the outside annuli
  which have $C^+_\beta$ and $C^-_\beta$ as
  boundary components, respectively;
\item $\e_\beta=1$ if, according to the local
  ordering, the preimage of $C^-_\beta$ in $\beta$ is higher than the
  preimage in $\alpha^0_\beta$, and $\e_\beta=-1$ otherwise.
\end{itemize}
See Figure \ref{fig:BS_Signs} for an illustration. 
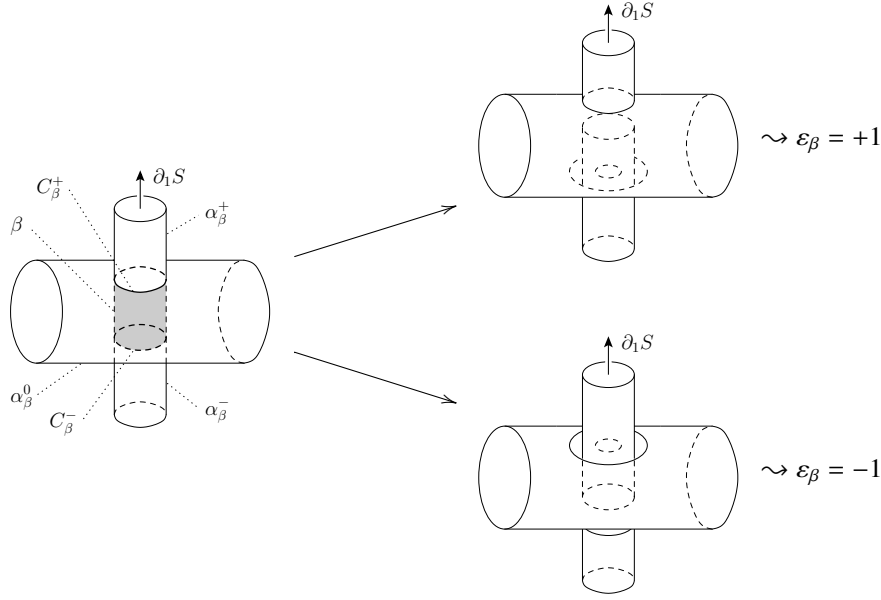
\begin{figure}[!h]
\[
\xymatrix@!0@R=2.2cm@C=7cm{
&\dessin{4cm}{Wirt_p} \leadsto \e_\beta=+1\\
\dessin{4cm}{Wirt_0} \ar[ru]\ar[rd]\\
&\dessin{4cm}{Wirt_m} \leadsto \e_\beta=-1
}
\]
  \caption{Signs associated to inside annuli}
  \label{fig:BS_Signs}
\end{figure}

\begin{prop}\cite{yajima,Yana}\label{prop:WirtingerProjection} Let $T$ be a ribbon tube and $S$ any broken surface representing $T$, then 
  \[
\pi_1(T)\cong\big\langle\Out(S)\ \big|\
  \alpha^+_\beta=(\alpha^-_\beta)^{(\alpha^0_\beta)^{\e_\beta}}\textrm{
  for all }\beta\in\In(S)\big\rangle.
\]
In this isomorphism, $\alpha\in\Out(S)$ is sent to $m_a$, where $a$ is
any point on $\alpha$ close to $\p \alpha$.
\end{prop}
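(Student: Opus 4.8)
The statement is the $2$--dimensional counterpart of the classical Wirtinger presentation of a knot group; for ribbon $2$--knots and $2$--links it is due to Yajima \cite{yajima} and Yanagawa \cite{Yana}, and since every step of that argument is local it applies verbatim to ribbon tubes. The plan is therefore to recall the mechanism and to verify that our conventions — in particular the exponents $\e_\beta$ — come out right. By Lemma \ref{lem:symmetric} we may assume $S$ is symmetric.

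First I would build, from $S$ and the projection $p$ along the $z$--coordinate, an explicit $2$--dimensional deformation retract of $W = B^4\setminus\overstar{N}(T)$. Fix a value $z_{\max}$ just above $N(T)$ (hence below $z_0$); then the part of $W$ with $z\ge z_{\max}$ is a ball that retracts to $e$, and scanning $B^4$ downwards in $z$ shows that $W$ retracts onto a $2$--complex built from $S$. In this complex, each \emph{sheet} of $S$ — a connected piece of the immersed annuli cut along the \emph{lower} preimage of every singular circle, i.e. along the erased parts of the broken-diagram convention — contributes a $1$--cell, realized by a small loop at $e$ that dips once below $z_{\max}$, encircles that sheet positively, and comes back up; and each singular circle contributes a $2$--cell. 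The sheets cut off along a broken \emph{non-essential} preimage are contractible piercing disks carved out of an adjacent outside annulus, with the same meridian as the latter, hence eliminable by a Tietze move; what survives is exactly one generator $m_a$ per outside annulus $\alpha\in\Out(S)$, with $a$ any point of $\alpha$ near $\p\alpha$ (well defined up to isotopy in $W$ since outside annuli are connected and locally unknotted). That these generate $\pi_1(W)$ is the standard general-position argument: any loop based at $e$ is homotopic to one lying in $\{z\ge z_{\max}\}$ except for finitely many fingers dipping below, each of them a conjugate of some $m_\alpha$.

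Next I would extract the relations. As at a classical crossing, the $2$--cell attached at a singular circle records a conjugation of the meridian of the sheet passing below by the meridian of the sheet passing above — a single conjugating meridian, because the ribbon hypothesis forbids triple points. For $\beta\in\In(S)$, one of its two bounding circles has its non-essential preimage broken, contributing only the trivial relation used to discard a piercing disk; at the other, the essential preimage is broken and there $\alpha^-_\beta$ passes below $\alpha^0_\beta$, so the $2$--cell gives $m_\beta = (m_{\alpha^-_\beta})^{(m_{\alpha^0_\beta})^{\e_\beta}}$, where $m_\beta$ is the meridian of the sheet on the far side of the break. Since that sheet is joined to $\alpha^+_\beta$ along the \emph{unbroken} bounding circle of $\beta$, one has $m_\beta = m_{\alpha^+_\beta}$, whence the relation $\alpha^+_\beta = (\alpha^-_\beta)^{(\alpha^0_\beta)^{\e_\beta}}$, one for each $\beta\in\In(S)$. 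Checking against the cell structure that these are all the relations, and noting that $\alpha\in\Out(S)$ is by construction sent to $m_a$, completes the proof.

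The main obstacle is twofold. The geometric point — that $W$ really retracts onto this $2$--complex — relies on the ribbon hypothesis ruling out triple and branch points of $S$, so that the only local models are the simple ones used above; this is precisely the Yajima--Yanagawa computation, which is local and hence unchanged for ribbon tubes, but one must additionally observe that residual wens near the boundary circles (permitted by Lemma \ref{lem:symmetric}) do not affect $\pi_1(W)$, which follows from the triviality of the square of a wen. The combinatorial point — that the conjugation appears with \emph{exactly} the exponent $\e_\beta$ of Figure \ref{fig:BS_Signs} and on the stated side — is a finite local verification, best done by inspecting the three pictures there while keeping track of the orientations of the $C_i$ and the co--orientation order. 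A convenient consistency check is that abelianizing the presentation collapses, inside each tube component, all outside annuli to one class, recovering $H_1(W) = \Z^n$ of Proposition \ref{prop:TubeHomology}.
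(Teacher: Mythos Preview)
Your approach is correct and is essentially the same as the paper's: the paper's proof is a one-liner that invokes the union of projection rays from $T$ to $S$ and defers to standard techniques (the Wirtinger argument as in Burde--Zieschang), which is exactly the mechanism you spell out in detail. One small point: your concern about ``residual wens near the boundary circles'' is unnecessary, since the proof of Lemma~\ref{lem:symmetric} shows that all wens cancel pairwise, so a symmetric broken surface diagram contains none.
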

\begin{proof}
  By considering, in $B^4$, the union of the projection rays from
  $T\subset B^4$ to $S\subset B^3=\p_0 B^4$, the result follows from
  standard techniques (see {e.g.} the proof of Theorem 3.4 in \cite{BZ}).
\end{proof}
\begin{exemple}
 Let $H$ be the ribbon tube represented in Figure \ref{fig:longitudemeridien}. We have 
   \[
\pi_1(H)\cong\big\langle m_1^0, m_1^1, m_2^0 \ \big|\
  m_1^1=(m_1^0)^{(m_2^0)^{-1}}\big\rangle.
\]
\end{exemple}

\begin{cor}\label{cor:NormGen}
  The group $\pi_1(T)$ is generated by elements $\{m_a\}_{a\in T}$,
  and moreover, if $a\in A_i$ for some $i\in\UnN$, then $m_a$ is a
  conjugate of $m_i^\e$ for both $\e=0$ or 1. 
\end{cor}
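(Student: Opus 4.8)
The plan is to read off both assertions from the Wirtinger presentation of Proposition~\ref{prop:WirtingerProjection}, supplemented by the elementary remark that two meridians around points lying in one and the same connected piece of $T$ are conjugate in $\pi_1(T)$.

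Fix a symmetric broken surface diagram $S$ representing $T$. By Proposition~\ref{prop:WirtingerProjection}, the group $\pi_1(T)$ is generated by the family indexed by $\Out(S)$, the generator attached to an outside annulus $\alpha$ being the meridian $m_a$ for a point $a$ lying on $\alpha$ close to $\p\alpha$; pulling such an $a$ back through the projection $B^4\to B^3$ to a point of $T$, each of these generators is of the form $m_a$ with $a\in T$. Hence the \emph{a priori} larger family $\{m_a\}_{a\in T}$ also generates $\pi_1(T)$, which is the first assertion.

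For the conjugacy statement, the key point is the claim that if $a,a'\in A_i$ for some $i\in\UnN$, then $m_a$ and $m_{a'}$ are conjugate in $\pi_1(T)$. Granting this, one applies it with $a'$ taken successively on $C_i\times\{0\}$ and on $C_i\times\{1\}$; since $m_i^0$ and $m_i^1$ are by definition the meridians of such points, it follows that the given $m_a$ is a conjugate of $m_i^\e$ for $\e=0$ and for $\e=1$. To prove the claim, choose a path $p$ from $a$ to $a'$ inside the connected annulus $A_i\cong S^1\times I$ and push it slightly off $A_i$ along a fixed normal direction to obtain a path $\widetilde p$ in $W$; joining $\widetilde p$ to the top level $z=z_0$ by the two obvious vertical segments and inserting the tails $\tau_a^{-1}$ and $\tau_{a'}$ produces a loop $g\in\pi_1(T)$, and isotoping the enlacing loop $\gamma_a$ along $\widetilde p$ shows that $m_{a'}=g^{-1}m_ag$. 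This is exactly the standard argument showing that the meridians of arcs of a classical link diagram lying along a common component are mutually conjugate, transposed one dimension up.

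I expect the only delicate part to be the careful bookkeeping in this last step: keeping track of the tail paths $\tau_a,\tau_{a'}$ (which live near the level $z=z_0$) and of the push-off $\widetilde p$, possibly perturbing $\widetilde p$ to dodge other sheets of $T$, and checking that the resulting concatenation is genuinely a loop in $W$ along which $\gamma_a$ isotopes to the relevant conjugate of $\gamma_{a'}$. Everything else is formal. One could also bypass this topological lemma and argue purely combinatorially from the presentation: each relation $\alpha^+_\beta=(\alpha^-_\beta)^{(\alpha^0_\beta)^{\e_\beta}}$ displays two generators lying over the same tube as conjugate, and the outside annuli lying over a fixed $A_i$ are tied together by such relations into a single conjugacy class that contains $m_i^0$ and $m_i^1$; I would record this as an alternative but run the topological argument in the main text.
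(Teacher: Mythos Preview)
Your proposal is correct and matches the paper's intent: the paper states this as an immediate corollary of Proposition~\ref{prop:WirtingerProjection} without giving any proof, and your argument is precisely the natural unpacking of that corollary. Both the generation statement (via the Wirtinger generators being meridians $m_a$) and the conjugacy statement (either topologically via paths in $A_i$, or combinatorially via the chain of Wirtinger relations along a tube component) are exactly what the reader is expected to supply.
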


\paragraph{\it Reduced fundamental group}
\label{sec:RedFundGr}

In this section, we define and describe a reduced notion of
fundamental group for $T$.
Indeed, Corollary \ref{cor:NormGen} states that $\pi_1(T)$ is normally
generated by meridians
  $m_1^\e,\cdots,m_n^\e$ for either $\e=0$ or 1.
  Moreover, since top meridians are also conjugates of the bottom
  meridians and {\it vice versa}, we can define the following without ambiguity:
\begin{defi}
  The \emph{reduced fundamental group of $T$} is defined as
  $R\pi_1(T)$, the reduced version of $\pi_1(T)$ seen as normally
  generated by either bottom meridians or top meridians.\\
  For convenience , we also denote $R\pi_1(\p_\e W)$ by $R\pi_1(\p_\e
  T)$, for $\e\in\{0,1\}$.
\end{defi}
\noindent (Recall that the reduced version of a group was defined at the end of Section \ref{sec:general-setting}.)

It is a consequence of the description of $H_*(W)$ given in
Proposition \ref{prop:TubeHomology} that, for $\e\in\{0,1\}$, the inclusion $\iota_\e:\p_\e
W\hookrightarrow W$ induces isomorphisms at the $H_1$ and $H_2$ levels.
Stallings theorem, \ie Theorem 5.1 in \cite{stallings}, then implies that
\[
(\iota_\e)_k:\fract{\pi_1(\p_\e W)}/{\Gamma_k \pi_1(\p_\e W)}
\xrightarrow[]{\hspace{.3cm} \simeq\hspace{.3cm} }\fract{\pi_1(T)}/{\Gamma_k \pi_1(T)}
\]
\noi are isomorphisms for every $k\in\N^*$.
But $\pi_1(\p_\e W)$ is the free group $\Fn$ generated by meridians
$m_1^\e,\cdots,m_n^\e$. It follows from Habegger-Lin's Lemma 1.3 in \cite{HL} that for $k\geq n$,
$R\left(\fract{\Fn}/{\Gamma_k\Fn}\right)\cong \RFn$.
As a consequence:

\begin{prop}\label{prop:ReducedIso} 
The inclusions $\iota_0$ and $\iota_1$ induce isomorphisms 
  $$\RFn\cong R\pi_1(\p_0 T)\xrightarrow[\iota^*_0]{\hspace{.3cm}\simeq \hspace{.3cm}}R\pi_1(T)\xleftarrow[\iota^*_1]{\hspace{.3cm}\simeq \hspace{.3cm}}R\pi_1(\p_1 T)\cong\RFn. $$
\end{prop}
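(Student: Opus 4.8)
The plan is to deduce Proposition~\ref{prop:ReducedIso} directly from the two preliminary facts already assembled in the excerpt: (i) the Stallings-type statement that, for each $\e\in\{0,1\}$, the inclusion $\iota_\e\colon\p_\e W\hookrightarrow W$ induces isomorphisms $\pi_1(\p_\e W)/\Gamma_k\pi_1(\p_\e W)\xrightarrow{\simeq}\pi_1(T)/\Gamma_k\pi_1(T)$ for all $k\in\N^*$; and (ii) Habegger--Lin's Lemma~1.3 in \cite{HL}, which says that for $k\geq n$ one has $R\big(\Fn/\Gamma_k\Fn\big)\cong\RFn$. The main point to verify is that the reduced fundamental group $R\pi_1(T)$, which is by definition a quotient of $\pi_1(T)$, can equally well be computed from the nilpotent truncations $\pi_1(T)/\Gamma_k\pi_1(T)$, and that the ``reduced'' and ``truncate'' operations are compatible in the appropriate range of $k$.

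First I would observe that the defining relations for the reduced version of a normally generated group only involve the normal generators and their conjugates, so for any group $G$ normally generated by $g_1,\dots,g_p$ one has a canonical surjection $RG\twoheadrightarrow R(G/\Gamma_k G)$ for every $k$, where on the right one uses the images of the $g_i$ as normal generators. The key observation is that when $G=\Fn$ (or more generally a quotient of $\Fn$ with the same number of normal generators) Habegger--Lin's lemma gives $R(\Fn/\Gamma_k\Fn)\cong\RFn$ as soon as $k\geq n$; combined with the evident surjection $\RFn\twoheadrightarrow R(\Fn/\Gamma_k\Fn)$ this forces $\RFn\xrightarrow{\simeq}R(\Fn/\Gamma_k\Fn)$ for all $k\geq n$, and hence that $R(\Fn/\Gamma_k\Fn)$ stabilizes. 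Applying the same reasoning to $\pi_1(T)$, normally generated by the meridians $m_1^\e,\dots,m_n^\e$ by Corollary~\ref{cor:NormGen}, together with fact~(i) which identifies $\pi_1(T)/\Gamma_k\pi_1(T)$ with $\Fn/\Gamma_k\Fn$ compatibly with meridians, I would conclude that $R\pi_1(T)$ surjects onto $R(\pi_1(T)/\Gamma_k\pi_1(T))\cong\RFn$, and that this surjection is induced by $\iota_\e^*$.

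To upgrade these surjections to isomorphisms I would argue as follows. The composite $\RFn\cong R\pi_1(\p_\e T)\xrightarrow{\iota_\e^*}R\pi_1(T)\twoheadrightarrow R(\pi_1(T)/\Gamma_k\pi_1(T))\cong\RFn$ is, by fact~(i) and the naturality of the reduction and truncation constructions, exactly the identity of $\RFn$ (for $k\geq n$): indeed on normal generators $m_i^\e\mapsto m_i^\e$. Since $\iota_\e^*\colon\RFn\to R\pi_1(T)$ has a left inverse it is injective; since it was already seen to be surjective (its image, a quotient of $\RFn$, surjects onto $\RFn$ via a map that is visibly split, and $R\pi_1(T)$ is itself a quotient of $\RFn$ via $\iota_\e^*$ composed with the isomorphisms $R\pi_1(\p_\e T)\cong\RFn$ — here I use that $\pi_1(\p_\e W)=\Fn$), it is bijective. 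Doing this for both $\e=0$ and $\e=1$ gives the two isomorphisms of the statement, and their common target is identified via the middle term $R\pi_1(T)$, yielding the displayed diagram.

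The step I expect to require the most care is the compatibility of the reduction functor $G\mapsto RG$ with the nilpotent truncation $G\mapsto G/\Gamma_k G$ for the relevant group $\pi_1(T)$ — specifically, checking that the natural surjection $R\pi_1(T)\to R(\pi_1(T)/\Gamma_k\pi_1(T))$ is in fact an isomorphism for $k\geq n$, so that nothing is lost in passing to a finite stage. This is where Habegger--Lin's Lemma~1.3 does the real work: it guarantees that over the free group the two operations commute in this range, and fact~(i) transports this to $\pi_1(T)$ because the comparison maps preserve the meridional normal generators. Once that compatibility is in hand, the rest is a formal splitting argument as above.
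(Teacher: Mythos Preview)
Your plan is the same as the paper's: Stallings plus Habegger--Lin's Lemma~1.3, and you correctly identify the crux as showing that $R\pi_1(T)\to R\big(\pi_1(T)/\Gamma_k\pi_1(T)\big)$ is an isomorphism for $k\ge n$.

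Two points deserve tightening. First, the parenthetical in your third paragraph justifying surjectivity of $\iota_\e^*$ is circular: you write that ``$R\pi_1(T)$ is itself a quotient of $\RFn$ via $\iota_\e^*$'', but that \emph{is} the surjectivity you are trying to prove. The injectivity argument via the left inverse is fine; surjectivity really does come down to the isomorphism $R\pi_1(T)\cong R\big(\pi_1(T)/\Gamma_k\pi_1(T)\big)$, i.e.\ to $\Gamma_k\big(R\pi_1(T)\big)=1$ for $k\ge n$, exactly as you say in your last paragraph.

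Second, your proposed justification for that last step --- that Stallings ``transports'' the free-group version of Habegger--Lin to $\pi_1(T)$ --- does not quite close the loop. Stallings only controls $\pi_1(T)/\Gamma_k\pi_1(T)$, so from it and the free-group statement you get $R\pi_1(T)/\Gamma_k\big(R\pi_1(T)\big)\cong\RFn$ for all $k\ge n$, hence that the $\Gamma_k\big(R\pi_1(T)\big)$ stabilise; but stabilising is not the same as vanishing. What actually does the work is that the \emph{argument} behind Habegger--Lin's lemma applies verbatim to any group normally generated by $n$ elements: in the reduced quotient each normal generator commutes with all its conjugates, so every basic commutator of weight $>n$ (which must repeat a generator) dies. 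By Corollary~\ref{cor:NormGen}, $\pi_1(T)$ is normally generated by the $n$ meridians, so $R\pi_1(T)$ is nilpotent of class $<n$ and the surjection to $R\big(\pi_1(T)/\Gamma_k\pi_1(T)\big)$ is an isomorphism for $k\ge n$. With that in hand, your splitting argument finishes the proof cleanly.
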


Using the isomorphisms of Proposition \ref{prop:ReducedIso}, we define, for
every ribbon tube $T$, a map $\varphi_T:\RFn\to\RFn$ by
$\varphi_T:={\iota^*_0}^{-1}\circ\iota^*_1$. This can be seen as
reading the top meridians as products of the bottom ones.
It is straightforwardly checked that $\varphi_{T\bullet T'}=\varphi_T\circ\varphi_{T'}$ and this action on $\RFn$ is obviously invariant under
isotopies of ribbon tubes.

It follows from Corollary \ref{cor:NormGen} that:
\begin{prop}
  For every ribbon tube $T$, $\varphi_T$ is an element of $\AutC(\RFn)$, the group of conjugating automorphisms. 
  More precisely, the action of $T\in\rTn$ on $\RFn$ is given by conjugation of each $x_i$, for
  $i\in\UnN$, by the image through $\iota^*_0$ of an $i^\textrm{th}$ longitude of $T$.
\end{prop}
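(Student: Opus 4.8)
The plan is to verify the two assertions separately: first that $\varphi_T \in \AutC(\RFn)$, and second the more precise description of the conjugating elements in terms of longitudes. For the first point, I would start by recalling from Corollary \ref{cor:NormGen} that $\pi_1(T)$ is normally generated by the bottom meridians $m_1^0,\dots,m_n^0$, and that every meridian $m_a$ with $a\in A_i$ is a conjugate of $m_i^0$ in $\pi_1(T)$; passing to the reduced quotient, this property is preserved, so in $R\pi_1(T)$ each $m_i^1$ is a conjugate of some $m_j^0$. Since $\varphi_T = {\iota^*_0}^{-1}\circ\iota^*_1$ and $\iota^*_1$ sends the generator $x_i$ of $\RFn \cong R\pi_1(\p_1 T)$ to the class of $m_i^1$ in $R\pi_1(T)$, the composite sends $x_i$ to a conjugate of $x_{\sigma(i)}$ for some index $\sigma(i)$. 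To see $\sigma$ is the identity, I would look at the induced map on first homology (abelianization): the isomorphisms of Proposition \ref{prop:ReducedIso} are compatible with the $H_1$-level statement, and $H_1$ is detected by linking numbers with the tube components $A_i$; since $\partial_\e A_i = C_i\times\{\e\}$ with $C_i$ a fixed curve, the top meridian $m_i^1$ and bottom meridian $m_i^0$ have the same image $c_i$ in $H_1(W)$. Hence $\varphi_T$ induces the identity on abelianizations, forcing $\sigma = \Id$, so $\varphi_T(x_i)$ is a conjugate of $x_i$ and $\varphi_T \in \AutC(\RFn)$.

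For the second, more precise statement, I would make the conjugating element explicit. Fix $i\in\UnN$. By the definition of $\varphi_T$, the element $\varphi_T(x_i)$ is obtained by writing the top meridian $m_i^1$ as a word in the bottom meridians inside $R\pi_1(T)$, via $({\iota^*_0})^{-1}$. I would exhibit an explicit path realizing a conjugation: recall that $m_i^1 = m_{a_i^1}$ for a point $a_i^1 \in C_i\times\{1\}$, and $m_i^0 = m_{a_i^0}$ for $a_i^0\in C_i\times\{0\}$. An $i^{\mathrm{th}}$ longitude $\lambda_i$ of $T$ is, by definition, an arc on $\p N(A_i)$ from $e_i^0$ to $e_i^1$, closed up with the broken lines $c_i^0\cup c_i^1$ through the basepoint $e$; it lies in $W$ and thus defines an element of $\pi_1(T)$. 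Dragging the loop $\gamma_{a_i^0}$ that links $A_i$ near $\partial_0 N(A_i)$ along this longitude arc to $\partial_1 N(A_i)$, one obtains a homotopy in $W$ exhibiting $m_i^1 = (m_i^0)^{\iota^*_0(\lambda_i)}$ in $\pi_1(T)$, hence in $R\pi_1(T)$; here I must be careful that the conjugating element is read precisely as $\iota^*_0$ applied to the longitude class, which follows because $\tau_{a_i^1}^{-1}\gamma_{a_i^1}\tau_{a_i^1}$ can be rewritten, using the longitude as a ``guiding'' path on the boundary torus, as $(\lambda_i)^{-1}(\tau_{a_i^0}^{-1}\gamma_{a_i^0}\tau_{a_i^0})(\lambda_i)$ up to homotopy rel basepoint. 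Applying $({\iota^*_0})^{-1}$ and using that $\iota^*_0$ identifies $\RFn$ with $R\pi_1(\p_0 T)$ compatibly with sending $x_i \mapsto m_i^0$, we get exactly $\varphi_T(x_i) = x_i^{\,({\iota^*_0})^{-1}(\lambda_i)}$, which is the claimed formula (noting the independence of the choice of $i^{\mathrm{th}}$ longitude modulo $m_i^0$, by Remark \ref{rk:preflong}, since conjugating $x_i$ by a power of $x_i$ changes nothing).

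The main obstacle I anticipate is the second step: carefully justifying that the conjugating element is \emph{precisely} the image of a longitude under $\iota^*_0$, rather than merely \emph{some} element of $\RFn$. This requires setting up the homotopy between the meridian loops $\tau_{a_i^1}^{-1}\gamma_{a_i^1}\tau_{a_i^1}$ and $(\lambda_i)^{-1}(\tau_{a_i^0}^{-1}\gamma_{a_i^0}\tau_{a_i^0})(\lambda_i)$ on the boundary torus $\p N(A_i) \cong S^1\times S^1\times I$, tracking basepoints and the auxiliary straight paths $\tau$ and broken lines $c_i^\e$ correctly. One convenient way to organize this is to work inside $\p N(A_i)$: there, both $\gamma_{a_i^0}$ and $\gamma_{a_i^1}$ are meridional circles $\p D^2\times\{s\}\times\{\e\}$, and the longitude arc connects them along the $S^1\times I$ direction; a product-structure argument on the solid-torus-times-interval $N(A_i)$ then shows the two based loops are freely homotopic in $W$ via the longitude, and the ``preferred longitude'' normalization (linking number zero) pins down the conjugating word up to the harmless ambiguity of a power of $x_i$. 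The rest—invariance under isotopy and the homomorphism property $\varphi_{T\bullet T'} = \varphi_T\circ\varphi_{T'}$—has already been recorded in the text, so no further work is needed there.
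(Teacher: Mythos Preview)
Your proposal is correct and follows essentially the same approach as the paper, which simply records the proposition as a consequence of Corollary~\ref{cor:NormGen} without further argument. Your longitude computation in the second part is a welcome elaboration of what the paper leaves implicit.

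One small simplification: your abelianization detour to show $\sigma=\Id$ is unnecessary. Corollary~\ref{cor:NormGen} already asserts that for $a\in A_i$ the meridian $m_a$ is a conjugate of $m_i^0$ specifically (not just of some $m_j^0$); since $m_i^1=m_{a_i^1}$ with $a_i^1\in C_i\times\{1\}\subset A_i$, you immediately get that $m_i^1$ is a conjugate of $m_i^0$, and hence $\varphi_T(x_i)$ is a conjugate of $x_i$. The $H_1$ argument is valid but redundant.
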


Note that, in the reduced free group, this conjugation does not depend
on the choice of an $i^\textrm{th}$ longitude. 

\begin{remarque}\label{rem:star}
  If the ribbon tube $T$ is  monotone, then the inclusions $\iota_\varepsilon$, 
  for $\varepsilon\in\{0,1\}$, actually induce  isomorphisms
  $\pi_1(\partial_\varepsilon W)=\Fn \cong \pi_1(T)$,
  so that $T$ defines an action in $Aut_C(\Fn)$ which is left
  invariant by any (possibly non-monotone) isotopy.
  Theorem 2.6 of \cite{WKO1} shows that this 
  induces an isomorphism between the group of monotone ribbon tubes up to monotone isotopies and $\AutC(\Fn)$, 
  since this group is isomorphic to the group $PUR_n$ defined in \cite{BH} (as noticed in Remark \ref{remarquemoisie}).  
  It follows that if two monotone ribbon tubes are  
  equivalent in $\rTn$, then they induce the same action on $\Fn$ and
  hence are isotopic through a monotone isotopy. Put differently, we have that the group $PUR_n$ injects in $\rTn$. 
\end{remarque}

\subsection{Classification of ribbon tubes up to link-homotopy.}

We now give a classification result for ribbon tubes, up to link-homotopy, in terms of their induced action on the reduced free group. 

\subsubsection{Link-homotopy}
\label{sec:LinkHomotopy}

\begin{defi}\label{def:SingRibbonTube}
A \emph{singular ribbon tube} is a locally flat
immersion $T$ of $n$ annuli $\psqcup_{i\in\UnN}A_i$ in
$\loverstar{B^4}{B}$ such that
\begin{itemize}
\item $\p A_i=C_i\times\{0,1\}$ for all $i\in\UnN$ and the
  orientation induced by $A_i$ on $\p A_i$ coincides with that of $C_i$;
\item the singular set of $T$ is a single flatly
    transverse circle, called \emph{singular loop}, whose
    preimages are two circles embedded in $\pcup_{i=1}^n\mathring{A}_i$, an essential and a non essential one.
\item there exist $n$ locally flat immersed $3$--balls $\pcup_{i\in\UnN}B_i$ such that
  \begin{itemize}
  \item $\p_* B_i=\mathring{A}_i$ and $\p_{\e}B_i=D_i\times\{\e\}$ for all $i\in\UnN$ and $\e\in\{0,1\}$;
  \item the singular set of  $\pcup_{i=1}^nB_i$ is a disjoint union of flatly
    transverse disks, all of them being ribbon singularities but one, whose
    preimages are two disks bounded by the preimages of the singular loop, one in $\pcup_{i=1}^n\p_*B_i$ and
    the other with interior in $\pcup_{i=1}^n\mathring{B}_i$.
  \end{itemize}
\end{itemize}
We say that a singular ribbon tube is \emph{self-singular} if and only if both preimages of the singular loop belong to the same tube component.
\end{defi}

\begin{defi}
  Two ribbon tubes $T_1$ and $T_2$ are said to be \emph{link-homotopic}
  if and only if there is a $1$--parameter family of regular and self-singular ribbon tubes from $T_1$ to
  $T_2$ passing through a finite number of self-singular ribbon
  tubes.\\
  We denote by $\rTHn$ the quotient
  of $\rTn$ by the link-homotopy equivalence, which is
  compatible with the monoidal structure of $\rTn$.
  Furthermore, we denote by $\rPHn$ the image of $\rPn$ in $\rTHn$.
\end{defi}

\begin{prop}\cite{KS}\label{def:CircleChange}
  The link-homotopy equivalence is generated by \emph{self-circle crossing changes}, 
  which are the operations in $B^4$ induced by the local move shown in Figure \ref{fig:BS_LocalMove},  
  which switches the local ordering on the preimages of a given singular circle, where it is required that both
  preimages are on the same tube component. 
\end{prop}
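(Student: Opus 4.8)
The plan is to run a general-position argument on a link-homotopy $\{T_s\}_{s\in[0,1]}$ and reduce it to a finite sequence of isotopies and occurrences of the local move of Figure \ref{fig:BS_LocalMove}. By definition, such a family consists of regular and self-singular ribbon tubes, with only finitely many parameter values $s_1<\dots<s_k$ at which $T_{s_j}$ is self-singular. On each closed subinterval of $[0,1]$ disjoint from $\{s_1,\dots,s_k\}$ the family $\{T_s\}$ is a $1$--parameter family of locally flat embeddings bounding ribbon $3$--balls, hence an ambient isotopy fixing the boundary circles, by the isotopy extension theorem. So everything reduces to understanding a single crossing of some value $s_j$: I would show that, for small $\delta>0$, the ribbon tubes $T_{s_j-\delta}$ and $T_{s_j+\delta}$ differ, up to isotopy, by exactly one self-circle crossing change.

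To do so, I would first perturb the family near $s_j$ (keeping it a link-homotopy) so that at $s=s_j$ the singular loop is a flatly transverse circle $\gamma\subset\mathring A_i$ with its two preimages — one essential, one non-essential — both on the same annulus $A_i$, and so that the passage through $s_j$ is the standard transverse model: two parallel $2$--dimensional sheets of $A_i$ (neighborhoods of the two preimage circles) moving through each other along $\gamma$ and exchanging their relative vertical position, everything else being constant. Projecting to $B^3$ — which is possible after isotopy by Yanagawa's lemma, and can be arranged to give a symmetric broken surface diagram by Lemma \ref{lem:symmetric} — this local model becomes precisely the move of Figure \ref{fig:BS_LocalMove}: a switch of the local ordering on the two preimages of a singular circle, both carried by $A_i$. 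Since nothing changes away from a neighborhood of $\gamma$, this identifies the change from $T_{s_j-\delta}$ to $T_{s_j+\delta}$ with an isotopy followed by one self-circle crossing change.

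The point requiring genuine work — and the main obstacle — is the coherent bookkeeping of the ribbon fillings $B_i$ along the homotopy, in particular verifying that a generic self-singular ribbon tube is locally standard in the sense just used, including the flat transversality of the relevant $3$--balls. I would handle this by passing to the disk-closure of $T_s$, as in the proof of Lemma \ref{lem:symmetric}: closing up turns $\{T_s\}$ into a link-homotopy of ribbon $2$--links, to which the analysis of (self-)crossing changes of ribbon $2$--links of Kamada--Satoh \cite{KS} applies, decomposing it into local moves supported near the singular loops together with moves across the closure disks that only create or cancel wens. The self-circle crossing changes occur away from the closure disks, and the residual wens created near those disks can, exactly as in Lemma \ref{lem:symmetric}, be pushed towards the boundary circles and cancelled in pairs — using that the two boundary circles of a wen carry opposite orientations while $\partial T$ is coherently oriented — which brings the statement back down from ribbon $2$--links to ribbon tubes and yields the asserted generating set. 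A self-contained alternative would be to stratify the space of singular ribbon tubes directly in $B^4$ and check that its codimension-one stratum is exactly the self-circle crossing change; but that approach has to reprove, rather than cite, the local normal form which is the substance of \cite{KS}.
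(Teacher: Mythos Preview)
The paper does not give its own proof of this proposition: it is simply stated with the citation \cite{KS} and no argument. So there is nothing to compare your attempt against on the paper's side; you have supplied a proof sketch where the authors chose to defer entirely to the literature.

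That said, your sketch is broadly sound and follows the natural line. The decomposition of a link-homotopy into isotopies on the complement of finitely many singular times, plus a local analysis at each singular time, is exactly the right skeleton. The identification of the local passage through a self-singular ribbon tube with a swap of the two sheets along the singular loop --- hence with the move of Figure~\ref{fig:BS_LocalMove} after projection --- is the heart of the matter, and you state it correctly. One small caution: you invoke Yanagawa's lemma and Lemma~\ref{lem:symmetric} to get a symmetric broken surface projection, but those results are stated for a single ribbon tube, not for a $1$--parameter family or for the singular object $T_{s_j}$ itself; you really only need the projection for $T_{s_j\pm\delta}$ separately and then a local normal form in $B^4$ near the singular loop, so it would be cleaner to argue directly in $B^4$ rather than passing through broken surfaces. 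Your disk-closure man{\oe}uvre to import the \cite{KS} analysis is reasonable, though somewhat circular in spirit since \cite{KS} is precisely the reference the paper cites for the whole proposition; your closing remark that a self-contained argument would have to reprove the local normal form is accurate and honest.
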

\begin{figure}[!h]
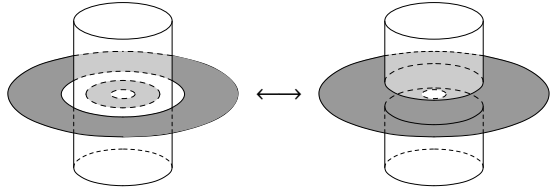

 \[
\dessin{2.5cm}{SingCir_1}\ \longleftrightarrow\ \dessin{2.5cm}{SingCir_2}
\] 
  \caption{A circle crossing change at the level of broken surface diagrams}
  \label{fig:BS_LocalMove}
\end{figure}

Note that a circle crossing change can be seen as a local move among symmetric broken surface diagrams. 
Indeed, although applying a circle crossing change yields a surface diagram which is no longer symmetric 
(see the middle of Figure \ref{fig:SBS_LM}), the resulting inside annulus correspond to a piece of tube passing entirely above or below another piece of tube. 
There is thus no obstruction in $B^4$ for pushing these two pieces of tube apart, so that their projections
don't meet anymore (see the right-hand side of Figure \ref{fig:SBS_LM}).
\begin{figure}[!h]
 \[
\xymatrix@R=-1.2cm{
&\dessin{2.5cm}{SFS_LM_Up}\ar@{}[rd]|{\vcenter{\hbox{\rotatebox{330}{$\simeq$}}}}&\\
\dessin{2.5cm}{SFS_LM_Start} \ar@{<->}[ur] \ar@{<->}[dr] &&\dessin{2.5cm}{SFS_LM_Apart}\\
&\dessin{2.5cm}{SFS_LM_Down}\ar@{}[ru]|{\vcenter{\hbox{\rotatebox{30}{$\simeq$}}}}& 
}
\]
  \caption{A circle crossing change at the symmetric broken surface diagram level}
  \label{fig:SBS_LM}
\end{figure}
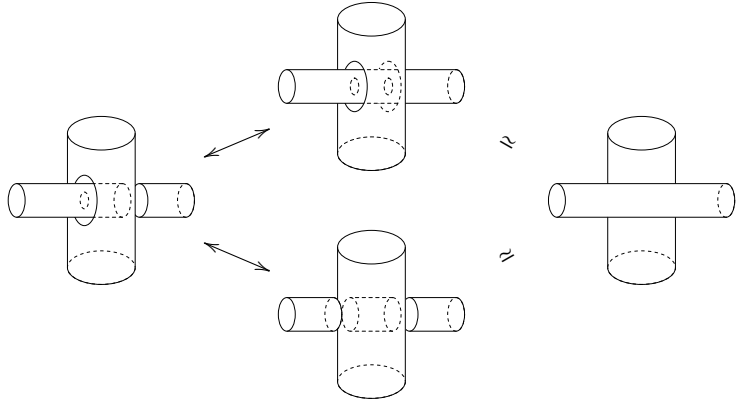

We now state one of the main results of this paper. 
\begin{theo}\label{thm:tube_braids}
  Every ribbon tube is link-homotopic to a monotone ribbon tube.
\end{theo}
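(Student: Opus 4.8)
The plan is to work at the level of broken surface diagrams and reduce the problem to a combinatorial statement about welded string link diagrams, exploiting that ``monotone'' is the topological counterpart of a descending (braid-like) diagram. By Lemma \ref{lem:symmetric}, any ribbon tube $T$ is represented by a symmetric broken surface diagram $S$. The key observation is that $S$ carries a welded string link diagram structure: the outside annuli play the role of strands, the inside annuli encode crossings (their sign $\e_\beta$ and the data $\alpha^\pm_\beta, \alpha^0_\beta$ of Figure \ref{fig:BS_Signs} translating directly into over/under crossing information), and singular circles that have been made ``virtual'' correspond to virtual crossings. Monotone ribbon tubes are exactly those admitting a diagram in which, reading along the co--orientation order, each strand is descending — i.e. a welded \emph{pure braid} diagram, with no classical crossing in which a strand passes under itself after having passed over, when traversed from $\p_0$ to $\p_1$.

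The core of the argument is then the following: using self-circle crossing changes (Proposition \ref{def:CircleChange}), which by the discussion around Figure \ref{fig:SBS_LM} are local moves on symmetric broken surface diagrams that do not leave the ribbon-tube world, one can ``comb'' the diagram so that each tube component becomes monotone relative to the co--orientation order. Concretely, I would proceed component by component: pick a tube component $A_i$; whenever, traversing $A_i$ in the co--orientation order, one encounters a singular circle for which $A_i$ is the higher ($\alpha^0$) preimage \emph{before} a later singular circle where $A_i$ is the lower preimage in a way that obstructs monotonicity, one applies a self-circle crossing change — this is legitimate precisely because the two preimages of such an obstructing singular circle can be arranged to lie on the same component, or more precisely one reduces the ``number of maxima'' along $A_i$. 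This is the direct $4$--dimensional analogue of the classical fact that a string link diagram can be made descending after self-crossing changes, hence is link-homotopic to a pure braid; in the welded setting the same combinatorics goes through, and the topological realization via broken surface diagrams makes each such diagrammatic crossing change realize an actual link-homotopy of ribbon tubes.

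The step I expect to be the main obstacle is controlling the \emph{wens}. A self-circle crossing change, as in Figure \ref{fig:SBS_LM}, produces a non-symmetric diagram that is then symmetrized by pushing pieces of tube apart; but this process — as already seen in the proof of Lemma \ref{lem:symmetric} — tends to create residual wens, and in the monotone setting one must verify that all such wens can be pushed to the boundary circles and cancelled pairwise. The orientation constraint on $\p T$ (that it agrees with $\psqcup_i C_i \times \{0,1\}$, forcing an even number of wens per boundary circle, as in Remark \ref{remarquemoisie} and the proof of Lemma \ref{lem:symmetric}) is what makes this possible, together with the fact (from \cite{KS}) that the square of a wen is trivial. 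So the argument is: perform the combinatorial combing to reach a descending diagram, track the wens produced, push them to the bottom circles, and cancel them, using that a wen commutes past the horizontal rings of a monotone configuration. I would also remark that this theorem can alternatively be deduced from its welded-diagrammatic counterpart (the analogue of Theorem \ref{th:wSLh=wPh}) via the Tube map, once the combinatorial statement ``every welded string link is self-virtually equivalent to a welded pure braid'' is established in Section \ref{sec:welded-diagrams}; but the direct topological proof above is self-contained modulo the wen bookkeeping.
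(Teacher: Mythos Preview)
The paper's proof is precisely the alternative you mention in your last sentence: it invokes the surjectivity of the $\Tube$ map and then applies Theorem~\ref{th:wGD=wGP}, the Gauss-diagram statement that every welded Gauss diagram is $\sa$--equivalent to a horizontal one. That is the entire argument; no direct topological proof is given.

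Your direct approach, by contrast, has a real gap. First, there is a confusion of notions: ``monotone'' means each tube is transverse to the lamination $\bigcup_t B^3\times\{t\}$ --- equivalently, the associated welded diagram has no local extrema in the height direction --- whereas the condition you write (``no classical crossing in which a strand passes under itself after having passed over'') is an over/under descending condition, unrelated to being braid-like. More importantly, your combing procedure only invokes \emph{self}-circle crossing changes, but the obstruction to monotonicity of a component $A_i$ is typically a local extremum blocked by a crossing with a \emph{different} component $A_j$, and such a crossing cannot be altered under link-homotopy. Your assertion that ``the two preimages of such an obstructing singular circle can be arranged to lie on the same component'' is simply false in general.

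This is exactly where the substance of the paper's proof lies. In the Gauss-diagram argument for Theorem~\ref{th:wGD=wGP}, inter-component obstructions are handled by the $\textrm{C}^3$ moves of Figure~\ref{fig:C3Moves}, which push an arrow past another at the cost of additional arrows and which rely essentially on the Tail Commute relation (the welded $\OC$); the proof then proceeds by induction on the number of strands. Your sketch contains no analogue of this mechanism. The wen bookkeeping you focus on is a side issue --- it does not appear at all in the paper's proof, and is not where the difficulty resides.
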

\begin{proof}
  Using the surjectivity of the Tube map defined in Section \ref{ref:CorrTTleMonde}, this is a direct consequence of Theorem \ref{th:wGD=wGP}.
\end{proof}

\begin{remarque}
Theorem \ref{thm:tube_braids} can be regarded as a higher-dimensional analogue of 
Habegger-Lin's result, stating that any string link is link-homotopic to a pure braid \cite{HL}. 
\end{remarque}

\begin{cor}
  The set $\rTHn$ is a group for the stacking product.
\end{cor}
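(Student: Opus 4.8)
The plan is to deduce the corollary directly from Theorem \ref{thm:tube_braids} together with Proposition \ref{prop:Abitbol}, which already establishes that $\rPn$ is a group under the stacking product. First I would observe that the link-homotopy relation is, by definition, compatible with the stacking product, so $\rTHn$ is at the very least a monoid with unit the class of the trivial ribbon tube $\mathbf{1}_n$; this is the content of the sentence accompanying the definition of $\rTHn$. What remains is to produce a two-sided inverse for an arbitrary class $[T]\in\rTHn$.

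The key step is to replace $T$ by a convenient representative. By Theorem \ref{thm:tube_braids}, every ribbon tube is link-homotopic to a monotone one, so without loss of generality we may assume $T\in\rPn$. By Proposition \ref{prop:Abitbol}, $T$ admits an inverse $T^{-1}\in\rPn$ in the monoid $\rTn$, given explicitly by $T^{-1}\cap\big(B^3\times\{t\}\big):=T\cap\big(B^3\times\{1-t\}\big)$, with $T\bullet T^{-1}=T^{-1}\bullet T=\mathbf{1}_n$ in $\rTn$. Passing to link-homotopy classes, $[T]\bullet[T^{-1}]=[T^{-1}]\bullet[T]=[\mathbf{1}_n]$ in $\rTHn$, so $[T^{-1}]$ is the desired inverse of $[T]$. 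Since every class has an inverse and the product is associative with unit, $\rTHn$ is a group.

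I would also note, for completeness, that the natural map $\rPHn\to\rTHn$ (the image of $\rPn$ in $\rTHn$) is then surjective by Theorem \ref{thm:tube_braids}, so $\rTHn$ is in fact a quotient of the group $\rPn$; this gives an alternative, essentially equivalent, way of phrasing the argument and makes transparent why $\rTHn$ inherits a group structure.

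The only genuine subtlety — and hence the ``main obstacle'' — is logical rather than technical: one must be sure that a link-homotopy of ribbon tubes is allowed to move the object freely within $\rTn$ (in particular through non-monotone stages), so that the equality $[T]=[T']$ for a monotone $T'$ obtained from Theorem \ref{thm:tube_braids} really does take place in $\rTHn$ and interacts correctly with the stacking product. This is exactly guaranteed by the definition of link-homotopy and its stated compatibility with the monoidal structure, so no new input is needed; the corollary is a formal consequence of the results already in hand.
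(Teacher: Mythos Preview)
Your proof is correct and is exactly the argument the paper has in mind: the corollary is stated immediately after Theorem \ref{thm:tube_braids} with no separate proof, being an obvious formal consequence of that theorem together with Proposition \ref{prop:Abitbol} and the compatibility of link-homotopy with the stacking product.
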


\subsubsection{Actions on the reduced free group and link-homotopy}
\label{sec:Action}

In Section \ref{sec:RedFundGr}, a conjugating automorphism $\varphi_T$ was associated to any ribbon tube $T$. It turns out that this automorphism $\varphi_T$ is invariant under link-homotopy.
\begin{prop}
  If $T_0$ and $T_1$ are two link-homotopic ribbon tubes, then $\varphi_{T_0}=\varphi_{T_1}$.
\end{prop}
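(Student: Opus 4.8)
The plan is to show that a single self-circle crossing change does not alter the induced map $\varphi_T\in\AutC(\RFn)$, since by Proposition \ref{def:CircleChange} these moves generate link-homotopy. So fix a symmetric broken surface diagram $S$ representing $T_0$, and let $S'$ be the diagram obtained by performing one self-circle crossing change, so that $S'$ represents $T_1$. The essential point is that, after the crossing change, the two sheets of the tube involved can be pushed apart in $B^4$ (as recalled in the discussion around Figure \ref{fig:SBS_LM}), both sheets belonging to the same tube component, say the $i$-th.

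First I would recall from Section \ref{sec:RedFundGr} that $\varphi_T$ is computed by reading the top meridians $m^1_j$ as words in the bottom meridians $m^0_j$ via the Wirtinger presentation of Proposition \ref{prop:WirtingerProjection}, and that by the last Proposition before this statement, the action of $T$ on $\RFn$ is conjugation of each $x_j$ by (the class of) a $j$-th longitude. Now a self-circle crossing change on the $i$-th component only modifies Wirtinger relations of the form $\alpha^+_\beta=(\alpha^-_\beta)^{(\alpha^0_\beta)^{\e_\beta}}$ in which the ``over'' annulus $\alpha^0_\beta$ and the ``under'' annuli $\alpha^\pm_\beta$ all lie on the $i$-th tube component; by Corollary \ref{cor:NormGen}, the generator $\alpha^0_\beta$ is therefore a conjugate of $m^0_i$. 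Hence, at the level of $\pi_1$, the crossing change replaces a relation of the shape $a = b^{w}$ (with $w$ a conjugate of $m^0_i$) by $a = b^{w^{-1}}$, i.e. it conjugates by $w^{-1}$ instead of $w$.

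The key step is then the reduced-group identity: in $\RFn$, if $w$ is a conjugate of $x_i$ and $u$ is any element which is itself a conjugate of $x_i$ (which is the case for every meridian on the $i$-th component, again by Corollary \ref{cor:NormGen}), then $u^{w}=u^{w^{-1}}$. This is exactly the defining relation of the reduced free group, that each generator commutes with all its conjugates: $[x_i; x_i^g]=1$ forces $x_i^w = x_i^{w'}$ whenever $w,w'$ are conjugates of $x_i$, and more generally $u^w=u$ for $u,w$ both conjugates of $x_i$ — so in particular $u^w=u=u^{w^{-1}}$. Consequently the Wirtinger relations of $S$ and $S'$, once pushed into the reduced quotient, define the same subgroup relations, the identification $R\pi_1(T_0)\cong R\pi_1(T_1)$ is compatible with the inclusions $\iota_0,\iota_1$ of Proposition \ref{prop:ReducedIso}, and therefore $\varphi_{T_0}=\iota^{*-1}_0\circ\iota^*_1=\varphi_{T_1}$.

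I expect the main obstacle to be the bookkeeping of \emph{which} meridians and annuli are affected by the crossing change and verifying carefully that they all sit on the single tube component $A_i$ — in other words, making precise the claim that a self-circle crossing change only perturbs the conjugation by an $i$-th longitude, and only by replacing a conjugator that is a product of $m^0_i$-conjugates by its inverse. Once that local analysis is pinned down, the reduced-group cancellation is immediate from the definition of $\RFn$. A clean way to organize this is to first prove the auxiliary fact that for $T$ and $T'$ differing by one self-circle crossing change the induced longitudes $\lambda_i$ and $\lambda_i'$ have the same image in $R\pi_1(T)$, and then invoke the preceding Proposition identifying $\varphi_T$ with conjugation by the $\lambda_i$'s.
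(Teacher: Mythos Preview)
Your approach is correct and is essentially the same as the paper's short proof, which invokes Lemma~\ref{lem:wtf} together with the surjectivity of the Tube map: that lemma shows invariance of the coloring $\xi_G$ under the self-arrow move, and the verification there is precisely your identity ``$u^w=u$ in $\RFn$ when $u,w$ are both conjugates of the same $x_i$''. You simply carry out the same computation directly on symmetric broken surface diagrams and their Wirtinger presentation (Proposition~\ref{prop:WirtingerProjection}) instead of passing through Gauss diagrams, which makes your argument more self-contained at this point of the paper.

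One inaccuracy to fix: a single self-circle crossing change does \emph{not} replace the Wirtinger relation $\alpha_\beta^+=(\alpha_\beta^-)^{(\alpha_\beta^0)^{\e_\beta}}$ by the relation with $\e_\beta$ flipped. As explained after Proposition~\ref{def:CircleChange} (Figure~\ref{fig:SBS_LM}), after the change the two tube pieces can be pushed apart in $B^4$, so the inside annulus $\beta$ disappears and $\alpha_\beta^+$, $\alpha_\beta^-$ merge into a single outside annulus; the net effect on the presentation is to \emph{delete} the relation (equivalently, to replace it by $\alpha_\beta^+=\alpha_\beta^-$). Flipping $\e_\beta$ would correspond to performing crossing changes at \emph{both} singular circles of $\beta$. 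This does not affect your conclusion, since your key identity $u^w=u$ already covers the correct case, but the local description should be amended.

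For comparison, the paper also sketches a genuinely different, purely topological proof: one takes the complement $W_H$ of the link-homotopy in $B^5\cong B^4\times I$, computes $\pi_1$ of the local $5$--ball around the singular loop via Seifert--Van Kampen (finding an extra commutation relation $m_1m_3=m_3m_1$ coming from a Hopf link in a transverse $S^3$), and deduces that the reduced groups of $T_0$, $T_1$ and the homotopy all coincide. Your Wirtinger argument is considerably shorter, at the cost of relying on Proposition~\ref{prop:WirtingerProjection} and the broken-surface description; the $B^5$ proof is longer but independent of the diagrammatic machinery.
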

\begin{proof}
  This is a consequence of Lemma \ref{lem:wtf} and the surjectivity of the Tube map defined in Section \ref{ref:CorrTTleMonde}.
However, this result can be given a more topological proof
that we will sketch here.

 It is sufficient to prove the proposition in the case of a
  link-homotopy $H$ passing through a unique singular ribbon tube. We
  denote then by $A$ the annulus which contains the unique singular loop and by $\delta_0$ the disk in $A$ which is bounded by this singular loop.  
By abuse of notation, we will also denote by $A$ the annulus in
  $T_0$, in $T_1$ and in any ribbon tube in-between, which are the
  deformations of $A$ by $H$.
  Moreover, we can assume that the resolution of $\delta_0$ in $T_0$ creates a ribbon singularity, as on the left-hand side of Figure 7, whereas it doesn't in $T_1$, as on the right-hand side of the figure.
 
Following the proof of Lemma 1.6 in \cite{HL},\footnote{ As it was already the case in \cite{HL}, Stallings theorem cannot be
  used here since $H_2(W_H)$ has an extra summand.
  Habegger and Lin retraction argument couldn't either be used in our context.}
we consider the
complement $W_H$ of a tubular neighborhood of $H$ in $B^5\cong B^4\times I$,
where $I$ represents the 1--parameter of the link-homotopy.
The inclusions $\iota_0:\p_0 W_H\hookrightarrow W_H$ and $\iota_1:\p_1
W_H\hookrightarrow W_H$ induce the following commutative diagram:

\begin{equation}
\vcenter{\hbox{\xymatrix{
R\pi_1(\p_0T_0) \ar[r]^(.38)\simeq \ar@{}[d]|{\rotatebox{90}{$=$}}&R\pi_1(\p_0
W)=R\pi_1(T_0)\ar[d]&R\pi_1(\p_1T_0) \ar[l]_(.38)\simeq \ar@{}[d]|{\rotatebox{90}{$=$}}\\
\RFn \ar[r] &
R\pi(H) & \RFn\ar[l]\\
R\pi_1(\p_0T_1) \ar[r]^(.38)\simeq \ar@{}[u]|{\rotatebox{90}{$=$}}&R\pi_1(\p_1 W)=R\pi_1(T_1)\ar[u]&R\pi_1(\p_1T_1) \ar[l]_(.38)\simeq  \ar@{}[u]|{\rotatebox{90}{$=$}}
}}},
\label{diag:TopInv}
\end{equation}
\noindent where $R\pi_1(H):=\fract{\pi_1(W_H)}/{\Omega}$ with $\Omega$ 
the normal subgroup generated by all commutators $[m;m^g]$, $m$
being any bottom meridian of $T_0$ and $g$ any element of
$\pi_1(W_H)$.
Note that, since top and bottom meridians of $T_0$ and $T_1$ are equal in
$\pi_1(W_H)$, and since top meridians are conjugates of the bottom
ones, both vertical maps to $R\pi(H)$ in (\ref{diag:TopInv}) are well defined. 

Let $B_*\subset \mathring{B}^4$ be a $4$--ball so that
\begin{itemize}
\item $\oB_*:=B_*\times I\subset B^5$ contains $\delta_0$;
\item $H$ is trivial outside $\oB_*$;
\item $\p\oB_*\cap H$ is the disjoint union of 4 thickened circles
  $C_1\times I$, $C_2\times I$, $C_3\times I$ and $C_4\times I$ where $C_1$, $C_2$, $C_3$ and $C_4$ are four essential curves in $A$, 
  numbered according to the co--orientation order. 
\end{itemize}
Up to symmetry, we can assume that $\delta_0$ is totally embedded in $\alpha_1$,
the annulus in $A$ which is cobounded by $C_1$ and $C_2$,
rather than in $\alpha_2$, the annulus in $A$ which is cobounded by $C_3$ and $C_4$.

Now, we denote by $Z:=W_H\setminus\oB_*$ the complement of $H$ outside
$B_*$ and by $m_i$, for $i\in\{1,2,3,4\}$, the meridian in
$\pi_1(W_H)$ which enlaces positively $C_i\times I$.

We claim that:
\begin{enumerate}
\item $\pi_1(\p_0
  W_H)=\fract{\pi_1(Z)}/{\big\{m_2=m_1;m_4=m_3^{m_1^{\pm1}}\big\}}$;
\item $\pi_1(\p_1
  W_H)=\fract{\pi_1(Z)}/{\big\{m_2=m_1;m_4=m_3\big\}}$;
\item\label{cond:trois} $\pi_1(W_H)=\fract{\pi_1(Z)}/{\big\{m_2=m_1;m_4=m_3;m_1m_3=m_3m_4\big\}}$.
\end{enumerate}

Using the Seifert--Van Kampen theorem and the Wirtinger presentation for the ribbon tube $\p_0 H\cap B_*$ and $\p_1
H\cap B_*$, the first two assertions are rather clear since filling $Z$ with $\big(\p_0 H\cap B_*\big)\times I$ 
or $\big(\p_1 H\cap B_*\big)\times I$ gives a thickening of, respectively, $\p_0 W_H$ and $\p_1 W_H$.

Now we focus our attention on $W_H\cap\oB_*$.
Using standard techniques, borrowed {\it e.g.} from the proof of Theorem 3.4 in
\cite{BZ}, it is easily seen that $\pi_1(W_H\cap\oB_*)$ is generated
by $m_1$ and $m_2$ and that $m_2=m_1$ and $m_4=m_3$.
We consider a point of $\p \delta_0$ and a $4$--ball $b$ around
it which is transverse to $\delta_0$.
The ball $b$ intersects $\alpha_1$ and $\alpha_2$ along two transverse disks and $b$ can be seen as
the product of these two disks. This product decomposition of $b$ provides a Heegaard
splitting of $\p b=S^3$ into two solid tori whose cores are $\p
b\cap\alpha_1$ and $\p b\cap\alpha_2$, and whose meridians are $m_1$ and $m_3$.
But these cores enlace as an Hopf link and since $\pi_1(\textrm{Hopf
  link})\cong \Z^2$, $m_1$ and $m_3$ commute.
Finally, using the Mayer--Vietoris exact sequence for $W_H\cap\oB_*$ seen
as $\big(\oB_*\setminus N_1\big)\cap \big(\oB_*\setminus
N_2\big)$ where $N_1$ and $N_2$ are tubular neighborhoods of,
respectively, $\alpha_1\times I$ and $\alpha_2\times I$, one can compute that the abelianization
$H_1(W_H\cap\oB_*)$ of $\pi_1(W_H\cap\oB_*)$ is $\Z^2$, so no other
relation can hold in $\pi_1(W_H\cap\oB_*)$.
By use of the Seifert--Van Kampen theorem, this proves statement (\ref{cond:trois}).

Now, since $m_1$, $m_2$, $m_3$ and $m_4$ are all conjugates of the top
meridian of $A$, they commute in the reduced group, and it follows
that $R\pi_1(\p_0
  W_H)\cong R\pi_1(\p_1
  W_H)\cong R\pi_1(W_H)\cong
  R\left(\fract{\pi_1(Z)}/{\big\{m_2=m_1;m_4=m_3\big\}}\right)$.
All the maps in the diagram (\ref{diag:TopInv}) are hence isomorphisms and since
$\varphi_{T_0}$ and $\varphi_{T_1}$ are, respectively, the upper and
lower lines of (\ref{diag:TopInv}), they are equal. 
\end{proof}

We can now give the main result of the paper. 
\begin{theo}\label{th:Iso}
  The map $\varphi$ : $
    \rTHn  \longrightarrow  \AutC(\RFn) $, sending $T$ to  $\varphi_T$ is an isomorphism.
\end{theo}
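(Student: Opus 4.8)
The plan is to prove that $\varphi\colon \rTHn \to \AutC(\RFn)$ is an isomorphism by establishing injectivity and surjectivity separately, in both cases reducing to the monotone case via Theorem \ref{thm:tube_braids}.

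First I would handle \emph{surjectivity}. Since every ribbon tube is link-homotopic to a monotone one (Theorem \ref{thm:tube_braids}), and since $\varphi$ is a homomorphism compatible with the monoidal structure, it suffices to realize every element of $\AutC(\RFn)$ by a monotone ribbon tube. By Remark \ref{rem:star}, the group $PUR_n$ of monotone ribbon tubes up to monotone isotopy is isomorphic to $\AutC(\Fn)$, and for a monotone tube the action $\varphi_T$ on $\RFn$ is the reduction of its action on $\Fn$. So the claim follows from the surjectivity of the natural map $\AutC(\Fn) \to \AutC(\RFn)$; concretely, a conjugating automorphism of $\RFn$ is given by a choice of conjugating element $w_i \in \RFn$ for each generator, which can be lifted to $\Fn$ and realized by a monotone ribbon tube (a welded pure braid), whose image in $\AutC(\RFn)$ is the prescribed automorphism.

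The heart of the matter is \emph{injectivity}, i.e.\ that $\varphi_T = \varphi_{T'}$ implies $T$ and $T'$ are link-homotopic. Using Theorem \ref{thm:tube_braids} again, I may assume $T$ and $T'$ are monotone, so the question reduces to: if two monotone ribbon tubes induce the same conjugating automorphism of $\RFn$, are they link-homotopic? Equivalently, working in $PUR_n \cong \AutC(\Fn)$, one must show that the kernel of $\AutC(\Fn) \to \AutC(\RFn)$ is contained in the subgroup of monotone ribbon tubes that are link-homotopically trivial, or more precisely that link-homotopy of monotone tubes corresponds exactly to the relations passing from $\Fn$ to $\RFn$. Here I expect the argument to go through the combinatorial/diagrammatic side: via the Tube map (Section \ref{sec:welded-diagrams}) one translates the statement into the corresponding result for welded string links (Theorems \ref{th:wSLh=wPh} and \ref{th:wSLh=AutC}), which in turn is proved using Gauss diagram techniques in Section \ref{sec:gauss-diagrams}. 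The key input is that a self-circle crossing change on ribbon tubes corresponds to the self-virtualization move on welded string links, and that the latter quotient of the welded pure braid group is exactly $\AutC(\RFn)$.

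The main obstacle is precisely this injectivity step: one cannot simply invoke Stallings' theorem as in the closed complement $W$, because (as the footnote in the preceding proof already flags) the relevant $5$-dimensional complement $W_H$ has extra homology, so the Milnor-type nilpotent approximation arguments of Habegger--Lin must be adapted rather than quoted. The resolution in this paper is to push everything onto the welded diagrammatic model, where Theorem \ref{th:wSLh=AutC} does the job; concretely, after reducing to monotone representatives one shows that two welded pure braids inducing the same automorphism of $\RFn$ differ by a sequence of self-virtualizations, hence the corresponding monotone ribbon tubes differ by self-circle crossing changes, hence are link-homotopic. Combining injectivity and surjectivity, and recalling that $\varphi$ is a well-defined homomorphism on $\rTHn$ by the previous proposition, yields that $\varphi$ is an isomorphism.
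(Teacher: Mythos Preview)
Your proposal follows essentially the same route as the paper: both reduce Theorem \ref{th:Iso} to the welded/Gauss-diagram classification (Theorem \ref{th:wSLh=AutC}, equivalently Theorem \ref{th:wGP=AutC}) via the Tube map. The paper packages this as the factorization $\fDA=\varphi\circ\Tube$ (Lemma \ref{lem:FVarphi}); since $\fDA$ is an isomorphism and $\Tube$ is surjective, both $\Tube$ and $\varphi$ are isomorphisms (Corollary \ref{cor:TubeInj} and the sentence following it). Your injectivity argument is correct and matches this.

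One point in your surjectivity argument deserves care. You reduce to the surjectivity of $\AutC(\Fn)\to\AutC(\RFn)$ and justify it by lifting the conjugating words $w_i$ from $\RFn$ to $\Fn$. But an arbitrary tuple of conjugating words in $\Fn$ only defines a basis-conjugating \emph{endomorphism} of $\Fn$, not a priori an automorphism, so the passage through $\AutC(\Fn)\cong\rPn$ is incomplete as stated. The paper avoids this detour: it builds the explicit inverse $\fAD$ directly at the level of $\wGDHn$ (Section \ref{sec:fAD}, via the ascending-diagram map $\eta$ and Remark \ref{rk:asc}), so surjectivity of $\varphi$ falls out of the factorization and the surjectivity of $\Tube$ without ever needing to lift to $\AutC(\Fn)$.
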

As pointed out in Section \ref{ref:CorrTTleMonde}, this is a
consequence of Theorem \ref{th:wGP=AutC}.

Some examples are given in Section \ref{sec:desexemplesdeoufdanstaface} below. 

\subsection{Classification of ribbon torus-links up to link-homotopy}
\label{sec:LinkClassification}

In \cite{HL2} a structure theorem was given for certain ``con\-cor\-dance-type'' equivalence relations on links in $3$-space.  
Here we give an analogous structure theorem in the higher dimensional
case. 
Actually, we follow the reformulation given in \cite{HMe}, which was in fact implicit in the proof of \cite{HL2}. 

We consider $n$-component \emph{ribbon torus-links}, that is, 
locally flat embeddings of $n$ disjoint tori in $S^4$ which bound
locally flat immersed solid tori whose singular set is a finite number of ribbon disks.
Denote by $\rLn$ the set of $n$-component ribbon torus-links up to isotopy. 
The tube-closure operation defined in Remark \ref{remarquefermeture}
induces a natural closure map $\hat{\ }: \rTn\rightarrow \rLn$, 
which is easily seen to be surjective. 
Indeed, given an $n$-component ribbon torus-link, it is always
possible up to isotopy to find a $3$-ball intersecting the $n$ components
transversally exactly once, along an essential circle, so that cutting
the ribbon torus-link along this ball provides a preimage for the closure operation.
We shall refer to such a ball as a ``base-ball''. 

Consider an equivalence relation $E$ on the union, for all $n\in\N^*$,
of the sets $\rTn$ and $\rLn$.   
We will denote by $E(x)$ the $E$-equivalence class of a ribbon tube or
torus-link $x$, and we also denote by $E$ the map which sends a
ribbon knotted object to its equivalence class.
We denote respectively by $E\rTn$ and $E\rLn$ the set of
$E$-equivalence classes of ribbon tubes and ribbon torus-links.  

The Habegger-Lin Classification Scheme relies on the following set of axioms: 
\begin{itemize}
\item[(1)]  The equivalence relation $E$ is \emph{local}, i.e. for all $L_1,L_2\in \rTn$ such that $E(L_1)=E(L_2)$, and for all $T_1,T_2\in \rT_{2n}$ such that $E(T_1)=E(T_2)$, we have 
  \begin{itemize}
  \item[(i)]  $E(\hat L_1)=E(\hat L_2)$,
  \item[(ii)]  $E(\mathbf{1}_n\otimes L_1)=E(\mathbf{1}_n\otimes L_2)$, where $\otimes$ denotes the horizontal juxtaposition, 
  \item[(iii)]  $E(L_1\lhd T_1)=E(L_2\lhd T_2)$ and $E(T_1\rhd L_1)=E(T_2\rhd L_2)$, 
       where the left action $\lhd$ and right action $\rhd$ of $\rT_{2n}$ on $\rTn$ are defined in Figure \ref{leftright}. 
  \end{itemize}
\begin{figure}[!h]
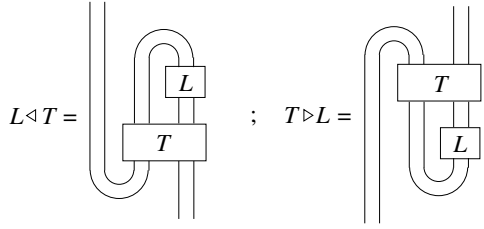

\[
\dessin{3cm}{actions}
\]
\caption{Schematical representations of the left and right actions of $T\in \rT_{2n}$ on $L\in \rTn$} 
\label{leftright}
\end{figure}
\item[(2)]  
For all $L\in \rTn$, there is a ribbon tube $L'$, such that $E(L\cdot L')=E(\mathbf{1}_n)$.
\item[($2'$)]  
For all $L\in \rTn$, $E(L\cdot \overline L)=E(\mathbf{1}_n)$, where
$\overline L$ denotes the image of $L$ under the hyperplane reflexion
about $B^3\times \{\frac{1}{2}\}$.
\item[(3)]  
The equivalence relation $E$ on ribbon torus-links is generated by isotopy of ribbon torus-links and the equivalence relation $E$ on ribbon tubes: 
If $L$ and $L'$ are two ribbon torus-links such that $E(L)=E(L')$, then there is a finite sequence $L_1,\dots, L_m$ of ribbon tubes such that $L$ is isotopic to $\hat L_1$,
$L'$ is isotopic to $\hat L_m$, and for all $i$ ($1\le i< m$), either $E(L_i)=E(L_{i+1})$ or $\hat L_i$ is isotopic to $\hat L_{i+1}$.
\end{itemize}

Let $E$ be a local equivalence relation.  
Denote respectively by  $ES^R_n$ and $ES^L_n$ the right and left
stabilizers of the trivial ribbon tube in $E\rTn$. 
One can easily check that 
$ES^R_n$ and $ES^L_n$ are both submonoid of $\rT_{2n}$.  
Furthermore, the closure operation induces a map  $\hat{\ }: E\rTn\rightarrow E\rLn$ which passes to the quotient by $ES^R_n$ (resp. $ES^L_n$).

Now, assume in addition that the equivalence relation $E$ satisfies Axiom $(2)$. 
Then clearly the monoid $E\rTn$ is a group, and both $ES^R_n$ and $ES^L_n$
are subgroups of $E\rT_{2n}$.  
If the stronger Axiom $(2')$ holds, then we actually have $ES^R_n=ES^L_n$. 

\begin{theo}[Structure Theorem for ribbon torus-links]  \hfill
\begin{itemize}
\item Let $E$ be a local equivalence relation satisfying axiom $(2)$. 
Then, for $*=R$ or $L$, the quotient map 
\[\rTn\longrightarrow \fract{E\rTn}/{ES^*_n} \]
factors through the closure map, i.e., we have a ribbon torus-link invariant
\[ \widetilde{E}: \rLn \longrightarrow \fract{E\rTn}/{ES^*_n}.  \]
such that the composite map to $E\rLn$ is $E$.

\item Furthermore, if Axiom $(3)$ also holds, then we have a bijection
$$ \fract{E\rTn}/{ES^*_n} = E\rLn.$$
\end{itemize} 
\end{theo}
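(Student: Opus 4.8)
The plan is to follow the abstract ``Habegger--Lin scheme'' in the form that makes each axiom do exactly one piece of work, so that the proof is essentially a formal manipulation of the definitions of $ES^*_n$, the closure map $\hat{\ }$, the juxtaposition $\otimes$, and the two actions $\lhd,\rhd$. The two bullets are genuinely different in character: the first produces a well-defined map $\widetilde E$ on $\rLn$, and the second upgrades it to a bijection onto $E\rLn$; I will treat them in turn.

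\medskip
\noindent\emph{First bullet.} Axiom $(2)$ makes $E\rTn$ a group, so $ES^R_n$ and $ES^L_n$ are subgroups of $E\rT_{2n}$ and the quotients $\fract{E\rTn}/{ES^*_n}$ make sense as sets of cosets. I first note that the closure map descends to a well-defined surjection $\overline{\hat{\ }}\colon \fract{E\rTn}/{ES^*_n}\to E\rLn$: if two ribbon tubes differ by an element of $ES^*_n$, their closures are $E$-equivalent, which follows from Axiom $(1)$(i) together with the reinterpretation of ``multiplying by a stabilizer element'' as an application of the $\lhd$ or $\rhd$ action in Figure~\ref{leftright} (this is where $(1)$(iii) is used). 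So the real content is to produce a map $\widetilde E\colon\rLn\to\fract{E\rTn}/{ES^*_n}$ going the other way, factoring the quotient map $\rTn\to\fract{E\rTn}/{ES^*_n}$ through $\hat{\ }\colon\rTn\to\rLn$. Concretely: given $L\in\rLn$, pick a base-ball and cut to get a preimage $T\in\rTn$ with $\hat T=L$; set $\widetilde E(L):=ES^*_n\cdot E(T)$. The crux is \textbf{independence of the choice of base-ball and of the cutting}. Two base-balls for the same $L$ yield two preimages $T,T'$ that are related by an isotopy of $L$; tracking this isotopy as a motion of the base-ball around the torus-link realizes $T'$ as $T$ acted on by an element of $\rT_{2n}$ that lies in the right (resp.\ left) stabilizer of $\mathbf 1_n$ up to $E$ --- this is precisely the geometric content that $ES^R_n$ (resp.\ $ES^L_n$) was designed to absorb, and it is essentially the argument of \cite{HL2}, reproduced in \cite{HMe}, carried over verbatim since every move involved is local. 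Hence $E(T)$ and $E(T')$ agree modulo $ES^*_n$, so $\widetilde E$ is well-defined; it is surjective because $\hat{\ }$ is, and by construction $\widetilde E\circ(\text{closure})$ and the quotient-then-project map agree, so composing $\widetilde E$ with the natural map $\fract{E\rTn}/{ES^*_n}\to E\rLn$ recovers $E$. The locality hypotheses $(1)$(ii) are what guarantee that base-balls can be slid past trivial components, i.e.\ that the juxtaposition $\mathbf 1_n\otimes(-)$ does not change the class; I would invoke it at the point where the isotopy of $L$ is broken into local pieces.

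\medskip
\noindent\emph{Second bullet.} Assuming also Axiom $(3)$, I must show the surjection $\overline{\hat{\ }}\colon\fract{E\rTn}/{ES^*_n}\to E\rLn$ is injective, equivalently that $\widetilde E$ and the projection to $E\rLn$ are mutually inverse bijections. Injectivity of $\overline{\hat{\ }}$ amounts to: if $T,T'\in\rTn$ have $E$-equivalent closures $E(\hat T)=E(\hat{T'})$, then $E(T)$ and $E(T')$ lie in the same $ES^*_n$-coset. By Axiom $(3)$, the $E$-equivalence $\hat T\sim_E\hat{T'}$ of torus-links is realized by a finite chain $L_1,\dots,L_m$ of ribbon tubes where consecutive terms are either $E$-equivalent as tubes (handled by the definition of $E\rTn$) or have isotopic closures (handled by the base-ball independence from the first bullet, i.e.\ by the stabilizer $ES^*_n$). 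Pushing each step through $\widetilde E$ and using that $\widetilde E$ is well-defined shows $ES^*_n\cdot E(T)=ES^*_n\cdot E(T')$, which is exactly injectivity; combined with surjectivity this gives the claimed bijection $\fract{E\rTn}/{ES^*_n}=E\rLn$.

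\medskip
\noindent\emph{Main obstacle.} The only non-formal step is the independence of $\widetilde E$ from the base-ball: one must show that an ambient isotopy of a ribbon torus-link carrying one base-ball to another can be decomposed into moves each of which changes the cut-open ribbon tube either by an isotopy or by the action of an element of the appropriate stabilizer $ES^*_n$. In the $3$-dimensional setting this is the technical heart of \cite{HL2}; here it should go through unchanged because the operations (braid-type closure along a base-ball, sliding the ball along a component, pushing it past another component) are all \emph{local} in the sense fixed in Section~\ref{sec:general-setting}, and because the left/right actions $\lhd,\rhd$ of $\rT_{2n}$ of Figure~\ref{leftright} encode exactly the ambiguity. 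I would therefore spell this decomposition out carefully and then simply cite \cite{HL2,HMe} for the combinatorial bookkeeping, since no genuinely new idea is needed beyond verifying the axioms hold in our category --- and Axioms $(1)$, $(2)$/$(2')$, $(3)$ for the relevant equivalence relations are checked elsewhere in the paper.
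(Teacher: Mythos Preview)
Your proposal is correct and follows essentially the same approach as the paper: the paper's proof is in fact nothing more than a citation, stating that the arguments of \cite{HL2} (as reformulated in \cite{HMe}) apply verbatim because they are purely combinatorial and algebraic, with the single exception of \cite[Prop.~2.1]{HL2}, whose ribbon-tube analogue requires a straightforward adaptation. You have correctly unpacked what that citation contains, and the ``main obstacle'' you isolate --- independence of $\widetilde E$ from the choice of base-ball --- is exactly the content of that Proposition~2.1, the one genuinely topological step.
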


This structure theorem is shown by applying verbatim the arguments of
\cite{HL2}, as reformulated in Theorem 3.2 of \cite{HMe}. Indeed,
although these papers only deal with classical knotted objects, the
proof is purely combinatorial and algebraic, and involves no
topological argument \emph{except} for \cite[Prop. 2.1]{HL2}, whose
ribbon tube analogue can actually be shown by a
straightforward adaptation of Habegger and Lin's arguments.

We have the following classification result for ribbon torus-links up to link-homotopy.
\begin{theo}\label{prop:classif}
The link-homotopy relation on ribbon tubes satisfies Axioms $(1)$, $(2')$ and $(3)$ above.
Consequently, we have a bijection
$$ \fract{\rTHn}/{S^+_n} = \rLHn,$$
where $\rLHn$ is the set of link-homotopy classes of ribbon torus-links and $S^+_n$ denotes the stabilizer of the trivial ribbon tube in $\rTHn$ with respect to the right (or left) action of $\rT_{2n}$ on $\rTn$ defined in Figure \ref{leftright}.  
\end{theo}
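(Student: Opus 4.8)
The plan is to verify the three axioms of the Habegger--Lin Classification Scheme for the link-homotopy relation on ribbon tubes, and then invoke the Structure Theorem just stated; the bijection $\fract{\rTHn}/{S^+_n} = \rLHn$ follows immediately, once one observes that Axiom $(2')$ implies Axiom $(2)$ and that the right and left stabilizers coincide, so the single group $S^+_n$ makes sense. The real content is checking the axioms, all of which are geometric statements about ribbon tubes and broken surface diagrams; crucially, Proposition \ref{def:CircleChange} tells us that link-homotopy is generated by self-circle crossing changes, so each axiom reduces to tracking how a self-circle crossing change interacts with the various operations.

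First I would treat Axiom $(1)$, locality. For $(1)(\textrm{ii})$, horizontal juxtaposition $\mathbf{1}_n\otimes -$ plainly commutes with any self-circle crossing change performed inside the second block of $n$ strands, so $E(\mathbf{1}_n\otimes L_1)=E(\mathbf{1}_n\otimes L_2)$ is immediate; similarly $(1)(\textrm{iii})$, since the left and right actions of $\rT_{2n}$ on $\rTn$ in Figure \ref{leftright} are built by stacking and juxtaposition, a self-circle crossing change on $L_i$ (resp. on $T_i$) remains a self-circle crossing change in $L_i\lhd T_i$ (resp. $T_i\rhd L_i$) -- one has to be a little careful that ``self'' is preserved, i.e. that the two preimages of the singular circle stay on the same component after the action is applied, which is clear from the picture since the action does not merge components. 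The one part requiring genuine work is $(1)(\textrm{i})$: if $E(L_1)=E(L_2)$ then $E(\hat L_1)=E(\hat L_2)$. Here a self-circle crossing change on $L_j$ induces, via the tube-closure, a self-singular move on the closed-up ribbon torus-link, so $\hat L_1$ and $\hat L_2$ are link-homotopic as ribbon torus-links; this needs the observation that the closure operation carries the link-homotopy relation on $\rTn$ to the link-homotopy relation on $\rLn$, which is straightforward from the definitions since closure does not create or destroy components and commutes with the local move of Figure \ref{fig:BS_LocalMove}.

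Next, Axiom $(2')$: for every $L\in\rTn$ one has $E(L\cdot\overline L)=E(\mathbf{1}_n)$, where $\overline L$ is the mirror image of $L$ under the reflection about $B^3\times\{\frac12\}$. The cleanest route is to use Theorem \ref{thm:tube_braids}: $L$ is link-homotopic to a \emph{monotone} ribbon tube $P\in\rPn$, and for monotone ribbon tubes the mirror image $\overline P$ is exactly the inverse $P^{-1}$ constructed in Proposition \ref{prop:Abitbol} (reflection about the middle level reverses the time parameter), so $L\cdot\overline L$ is link-homotopic to $P\cdot P^{-1}=\mathbf{1}_n$ in $\rPn\subset\rTn$. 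I would spell out that mirror image is compatible with link-homotopy (a self-singular tube maps to a self-singular tube under reflection) so that passing to the link-homotopy class of $P$ is legitimate. Finally, Axiom $(3)$: if two ribbon torus-links $L,L'$ are link-homotopic, one must produce a finite chain of ribbon tubes $L_1,\dots,L_m$ with $\hat L_1\cong L$, $\hat L_m\cong L'$, and consecutive terms either link-homotopic as tubes or having isotopic closures. The idea is to fix a base-ball for $L$, decompose the link-homotopy into a finite sequence of self-singular moves (Proposition \ref{def:CircleChange}) together with ambient isotopies; for each elementary self-circle crossing change, isotope the ribbon torus-link so that the ball in which the move occurs is disjoint from the base-ball, cut along the base-ball to get a ribbon tube, perform the crossing change there, and close up again -- this is exactly the ``ribbon tube analogue of \cite[Prop.~2.1]{HL2}'' referred to in the paragraph preceding the statement, whose proof is a direct adaptation of Habegger--Lin's.

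The main obstacle is Axiom $(3)$, specifically making rigorous the step of moving the support of each elementary link-homotopy move away from the base-ball. One must check that a generic $1$-parameter family realizing the link-homotopy can be perturbed so that, at each non-generic time, the self-singular ribbon torus-link still admits the prescribed base-ball (an essential $3$-ball meeting every component once transversally), and that between consecutive non-generic times the base-ball can be isotoped along; this is a transversality/general-position argument in $B^4$ (really in $B^4\times I$, the parameter space of the homotopy) that parallels \cite[Prop.~2.1]{HL2} but must be carried out in the ribbon-tube setting, using that ribbon tubes and their singular analogues are defined via broken surface diagrams, so the argument can in fact be run combinatorially at the diagram level. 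The remaining axioms are essentially formal consequences of the definitions and of Theorem \ref{thm:tube_braids}.
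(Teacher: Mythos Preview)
Your proposal is correct and follows essentially the same approach as the paper: Axiom $(1)$ is verified directly, Axiom $(2')$ is reduced via Theorem~\ref{thm:tube_braids} to the monotone case where $\overline{P}=P^{-1}$ by Proposition~\ref{prop:Abitbol}, and Axiom $(3)$ is handled by choosing a base-ball disjoint from the self-crossing circles. The only difference is that the paper's treatment of Axiom $(3)$ is simpler than the transversality argument you outline: since the self-circle crossing changes occur at a \emph{finite} set $S$ of circles in a fixed representative, one may simply choose the base-ball to be disjoint from $S$ from the outset, so no general-position argument in $B^4\times I$ is needed.
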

\begin{proof}
The fact that the link-homotopy relation is local (Axiom $(1)$) is evident.
Axiom $(2')$ holds as a consequence of Theorem \ref{thm:tube_braids}
and the group structure on monotone ribbon tubes as described
in the proof of Proposition \ref{prop:Abitbol}. 
To see that Axiom $(3)$ holds, suppose that the ribbon torus-link $L'$
is obtained from $L$ by applying circle crossing changes at a given
set $S$ of self-crossing circles. 
Let $L$ be the tube-closure of a ribbon tube $L_1$. As sketched at the
beginning of this section, $L_1$ is specified by the choice of a
base-ball intersecting each component of $L$ exactly once, and we may assume that this ball is disjoint from $S$.
Thus $L'$ is the tube-closure of a ribbon-tube $L_2$, obtained from
$L_1$ by successive circle crossing changes at each of the circles in
the set $S$.  
This shows that link-homotopy for ribbon torus-links is implied by
link-homotopy for ribbon tubes and isotopy, hence concludes the proof. 
\end{proof}

\subsection{Some examples}\label{sec:desexemplesdeoufdanstaface}

We conclude with a few simple examples of applications of the classification theorems \ref{th:Iso} and \ref{prop:classif}.  

Let $H$ be the ribbon tube represented in Figure \ref{fig:longitudemeridien}, and denote by $H^n$ the stacking product of $n$ copies of $H$. 
Using Proposition \ref{prop:WirtingerProjection}, we have that 
$\varphi(H^n)$ acts on $\RF_2$ by mapping $x_1$ to $x_2^{n}x_1x_2^{-n}$ and fixing $x_2$.  
By Theorem \ref{th:Iso}, this provides an infinite $1$-parameter family of $2$-component ribbon tubes that are pairwise non link-homotopic. 

This result is easily tranferred to the case of ribbon torus-links. 
Namely, consider the tube-closure of $H$, which is the ribbon torus-link shown on the right-hand side of Figure \ref{fig:desfiguresdeoufdanstaface}, and more generally the tube-closure of $H^n$ for all $n\ge 1$. These torus-links are pairwise non link-homotopic, which is in striking contrast with the case of ribbon $2$-links, where all objects are link-homotopically trivial \cite{BT}.
This can be proved topologically, by considering the homology class of a longitude of a torus component in the complement of the other component, or algebraically, as an application of Theorem \ref{prop:classif}.
\begin{figure}[!h]
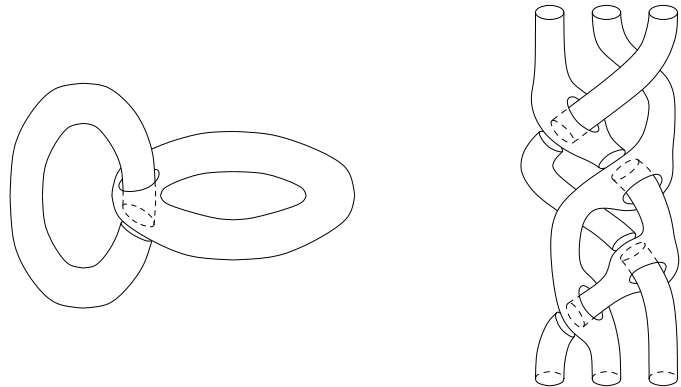

  \[
  \begin{array}{ccc}
    \dessin{3.5cm}{Hopf}&\hspace{1cm}&\dessin{5.5cm}{Borromeilhan}
  \end{array}
  \]
  \caption{Examples of a ribbon torus-link and a ribbon tube that are not link-homotopically trivial}
  \label{fig:desfiguresdeoufdanstaface}
\end{figure}

Further exemples involving more components can also be provided.  
For example, consider the $3$-com\-po\-nent ribbon tube $B$ shown on the right-hand side of Figure \ref{fig:desfiguresdeoufdanstaface}. 
Note that $B$ is \emph{Brunnian}, meaning that removing any component yields the trivial $2$-component ribbon tube. 
Unlike for the ribbon tube $H$, the homology class of a longitude in the complement of the other two components doesn't detect $B$.   
But using Proposition \ref{prop:WirtingerProjection}, the automorphism 
$\varphi(B)$ of $\RF_3$ fixes $x_1$ and $x_2$ and maps $x_3$ to its conjugate by $[x_2^{-1};x_1^{-1}]$. 
Theorem \ref{th:Iso} then implies that $B$ is not link-homotopic to the trivial ribbon tube.  
One can check that the tube-closure of $B$ is likewise not link-homotopic to the trivial torus-link.   
One-parameter families of pairwise non link-homotopic $3$-component ribbon tubes and torus-links can also be obtained by stacking multiples copies of $B$.


\section{Welded string links}
\label{sec:welded-diagrams}
In this section, we introduce two classes of diagrammatic objects,  welded string links and welded pure braids, and we explain how they are
related to the topological objects of the previous section. In particular, we give a classification result for welded string links up to self-virtualization.

\subsection{Definitions}
\begin{defi}\label{def:StringLink}
An $n$-component \emph{virtual string link diagram} is a locally flat
immersion $L$ of $n$ intervals $\psqcup_{i\in\UnN}I_i$ in
$\loverstar{B^2}{B}$, called \emph{strands},
such that
\begin{itemize}
\item each strand $I_i$ has boundary $\p I_i=\{p_i\}\times\{0,1\}$ and is oriented from $\{p_i\}\times\{0\}$ to $\{p_i\}\times\{1\}$ ($i\in\UnN$);
\item the singular set $\Sigma(L)$ of $L$ is a finite set of flatly transverse points.
\end{itemize}
Moreover, for each element of $\Sigma(L)$, a partial ordering is given on
the two preimages.
If the preimages are comparable, the double point is called \emph{a classical crossing}, if
not, it is called \emph{a virtual crossing}.
By convention, this ordering is specified on pictures by erasing a
small neighborhood in $\pcup_{i=1}^n\mathring{a}_i$ of the
lowest preimage of classical crossings, and by circling the
virtual one (see Figure \ref{fig:crossings}).\\
A classical crossing is said to be \emph{positive} 
if and only if the basis made of the tangent vectors of the highest and lowest preimages 
is positive. Otherwise, it is \emph{negative} (see Figure \ref{fig:crossings}).\\
Up to isotopy, the set of virtual string link diagrams is naturally endowed with a monoidal structure by the
stacking product $L\bullet L':=L\pcup_{\p_1L=\p_0L'}L'$, and with unit element the trivial diagram $\pcup_{i\in \UnN} p_i\times I$. 
\end{defi}

\begin{figure}[!h]
\[
\begin{array}{ccc}
  \dessin{2cm}{VirtCr}&\hspace{1cm}&\vcenter{\hbox{\xymatrix@R=-1cm{&\dessin{1.6cm}{PosCr}\
        \textrm{\ : positive}\\\dessin{2cm}{ClasCr}\ar[ur]\ar[dr]&\\&\dessin{1.6cm}{NegCr}\
        \textrm{\ : negative}}}}\\
  \textrm{virtual}&&\textrm{classical}\hspace{1.5cm}
\end{array}
\]
 \caption{Virtual and classical crossings} \label{fig:crossings}
\end{figure}
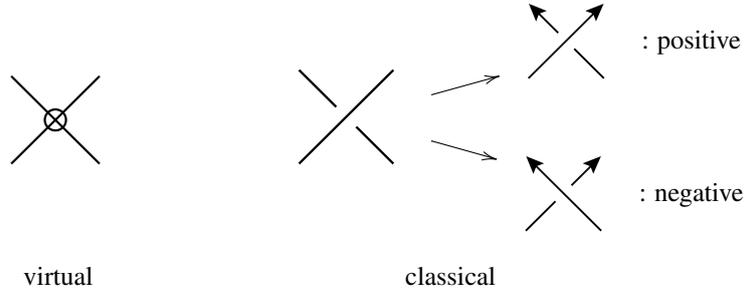

\begin{defi}\label{def:MonotoneSLn}
  A virtual string link diagram is said to be \emph{monotone} if it is flatly transverse to the lamination
  $\pcup_{t\in I}I \times\{t\}$ of $B^2$.
\end{defi}

\begin{defi}
  Two virtual string link diagrams are equivalent if 
  they are related by a finite sequence of the moves, called
  \emph{generalized Reidemeister moves}, represented in
  Figure \ref{fig:AllReidMoves}. There, all lines are pieces of strands which
  may belong to the same strand or not, and can have any orientation. \\
  We denote by $\vSLn$ the quotient of $n$-component virtual string link diagrams up to
  isotopy and generalized Reidemeister moves, which is compatible with the
  stacking product. We call its elements $n$-component \emph{virtual string links}.
\end{defi}
\begin{figure}[!h]
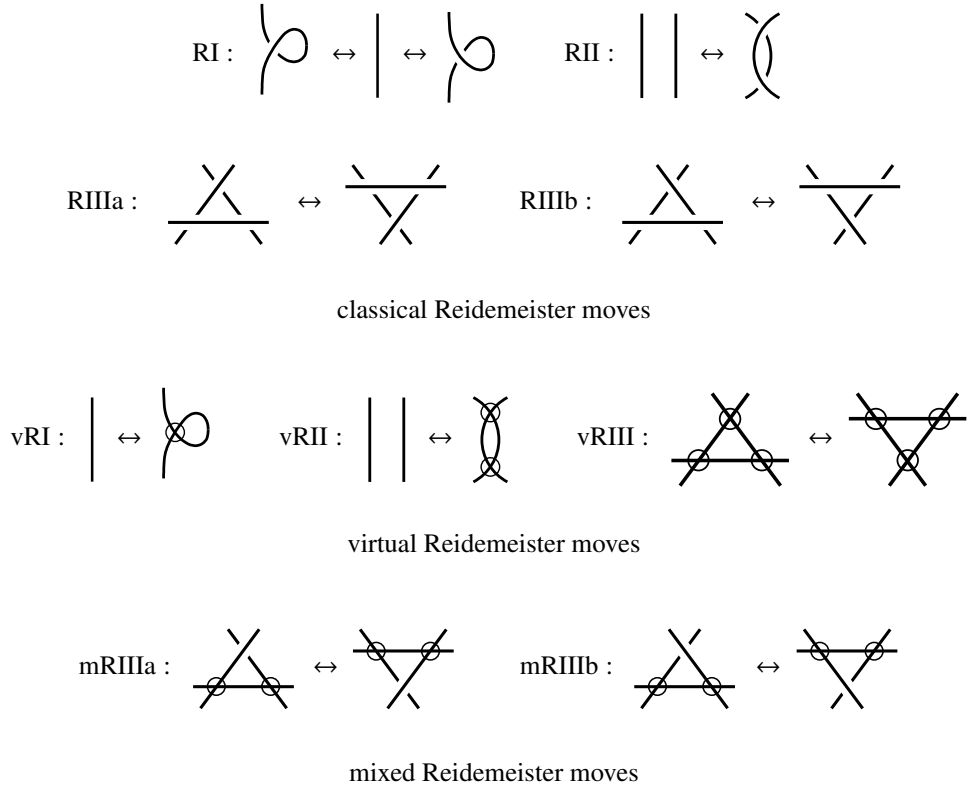

  \[
  \begin{array}{c}
    \begin{array}{c}
       \textrm{RI}:\ \dessin{1.5cm}{ReidI_2}
    \leftrightarrow \dessin{1.5cm}{ReidI_1} \leftrightarrow \dessin{1.5cm}{ReidI_3}
\hspace{.7cm}
\textrm{RII}:\ \dessin{1.5cm}{ReidII_1} \leftrightarrow \dessin{1.5cm}{ReidII_2}\\[1cm]
\textrm{RIIIa}:\ \dessin{1.5cm}{ReidIIIa_1} \leftrightarrow \dessin{1.5cm}{ReidIIIa_2}
\hspace{.7cm}
\textrm{RIIIb}:\ \dessin{1.5cm}{ReidIIIb_1} \leftrightarrow \dessin{1.5cm}{ReidIIIb_2}
\end{array}\\[2cm]
    \textrm{classical Reidemeister moves}
  \end{array}
  \]
\vspace{.5cm}
  \[
     \begin{array}{c}
       \textrm{vRI}:\ \dessin{1.5cm}{vReidI_1}
    \leftrightarrow \dessin{1.5cm}{vReidI_2}
\hspace{.7cm}
\textrm{vRII}:\ \dessin{1.5cm}{vReidII_1} \leftrightarrow \dessin{1.5cm}{vReidII_2}
\hspace{.7cm}
\textrm{vRIII}:\ \dessin{1.5cm}{vReidIII_1} \leftrightarrow \dessin{1.5cm}{vReidIII_2}\\[1cm]
    \textrm{virtual Reidemeister moves}
  \end{array}
   \]
\vspace{.5cm}
  \[
      \begin{array}{c}
       \textrm{mRIIIa}:\ \dessin{1.5cm}{mReidIIIa_1}
    \leftrightarrow \dessin{1.5cm}{mReidIIIa_2}
\hspace{.7cm}
\textrm{mRIIIb}:\ \dessin{1.5cm}{mReidIIIb_1} \leftrightarrow \dessin{1.5cm}{mReidIIIb_2}\\[1cm]
    \textrm{mixed Reidemeister moves}
  \end{array}
  \]
  \caption{Generalized Reidemeister moves on diagrams}
  \label{fig:AllReidMoves}
\end{figure}

Virtual string links are interesting objects on their own, but since
we are motivated in the first place by applications to ribbon tubes,
we will focus on the following quotient.

\begin{defi}
We define the Over Commute (OC) move as
\[
\OC:\ \dessin{1.5cm}{OC_1}\ \longleftrightarrow \dessin{1.5cm}{OC_2}.
\]
\noi We denote by $\wSLn:=\fract{\vSLn}/{\OC}$ the quotient of $\vSLn$
  up to $\OC$ moves, which is compatible with the
  stacking product. We call its elements
  $n$-component \emph{welded string links}.\\
  We denote by $\wPn$ the subset of $\wSLn$ whose elements admit a
  monotone representative.
\end{defi}

\begin{warnings}$\ $
  \begin{itemize}
  \item The following Under Commute (UC) move
    \[
    \begin{array}{rcl}
      \UC:\ \dessin{1.5cm}{UC_1}&\ \longleftrightarrow\
      &\dessin{1.5cm}{UC_2}\\[-.96cm]
      &\textrm{\textcolor{red}{$\times$}}&\\[.6cm]
    \end{array},
    \]
    \noi was forbidden in the virtual context and is still forbidden
    in the welded context.
  \item Virtual and welded notions do not coincide, even for $n=1$, where we get respectively the notion of virtual and welded long knots (see \cite{WKO1}).  
  \end{itemize}
\end{warnings}

Similarly to the ribbon tube case, it is straightforwardly checked that
\begin{prop}
  The set $\wPn$ is a group for the stacking product.
\end{prop}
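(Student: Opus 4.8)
The claim to prove is that $\wPn$, the set of welded string links admitting a monotone representative, forms a group under the stacking product. The phrasing ``Similarly to the ribbon tube case'' signals that the argument should mirror the proof of Proposition~\ref{prop:Abitbol} on monotone ribbon tubes, which itself has a very short proof: the product of monotone objects is visibly monotone, and a monotone object has an explicit inverse obtained by flipping the time parameter.

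So my plan is as follows. First I would observe that $\wPn$ is a submonoid of $\wSLn$: the trivial diagram $\pcup_{i\in\UnN} p_i\times I$ is monotone, and if $L$ and $L'$ both have monotone representatives $D$ and $D'$, then the stacking $D\bullet D'$ is again flatly transverse to the lamination $\pcup_{t\in I} I\times\{t\}$ after the obvious reparametrization of the $I$-coordinate, so $L\bullet L'\in\wPn$. This is the ``obviously monotone'' half, exactly as in Proposition~\ref{prop:Abitbol}.

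The substantive half is exhibiting inverses. Given $L\in\wPn$ with monotone representative $D$, I would define $\oB D$ (or $D^{-1}$) to be the diagram obtained by the hyperplane reflection in the $t$-coordinate, i.e.\ $D^{-1}\cap\big(I\times\{t\}\big):=D\cap\big(I\times\{1-t\}\big)$, with crossing information and virtual/classical decoration carried along — this is still a monotone virtual string link diagram, so its class lies in $\wPn$. Then I must check $D\bullet D^{-1}$ is equivalent to the trivial diagram in $\wSLn$. Here is where one actually needs the Reidemeister moves: stacking a monotone diagram on top of its time-reversal gives, strand by strand, a sequence of crossings immediately followed by their mirror images, which can be cancelled in pairs. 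For virtual crossings this uses vRII; for classical crossings this uses RII; the only subtlety is that the crossings of $D$ must be ``pulled apart'' vertically before cancelling, which is done by repeated applications of RII, RIII, vRIII and the mixed moves mRIIIa, mRIIIb — equivalently, one argues that in a monotone diagram the crossings can be totally ordered by height and cancelled innermost-first. One should note that the $\OC$ quotient is not even needed for this argument, so the statement already holds in $\vPn$; but stating it for $\wPn$ is what is used later.

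The main obstacle — really the only place requiring care rather than routine checking — is making the cancellation $D\bullet D^{-1}\simeq\mathbf{1}_n$ rigorous without hand-waving: one wants a clean induction on the number of crossings of $D$, peeling off the bottommost crossing of $D$ (equivalently the topmost of $D^{-1}$), sliding it down through $D^{-1}$ using the appropriate generalized Reidemeister moves until it meets its mirror partner, and cancelling the pair via RII or vRII. Since monotony guarantees every level set is $n$ transverse points and no strand ever turns back, no RI moves and no genuinely obstructed configurations arise, and the induction goes through. I expect the authors to dispatch all of this in one or two sentences, exactly as they did for ribbon tubes, since the pattern is identical.
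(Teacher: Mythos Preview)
Your proposal is correct and follows exactly the approach the paper intends: the paper in fact gives no proof at all, merely stating that ``similarly to the ribbon tube case, it is straightforwardly checked,'' i.e.\ monotone diagrams are closed under stacking and the inverse is the time-reversed diagram, exactly as in Proposition~\ref{prop:Abitbol}. Your explicit innermost-first RII/vRII cancellation argument for $D\bullet D^{-1}\simeq\mathbf{1}_n$ is a legitimate way to fill in the details the authors leave implicit.
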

\begin{remarque}\label{rem:wTBW}
Note that if two monotone virtual string link diagrams are equivalent in $\wSLn$, then they are related by a
monotone transformation, that is by a sequence of isotopies, generalized Reidemeister and OC moves 
which remain within the set of monotone virtual string links.
Indeed, it is a consequence of Remark \ref{rem:star2} and Proposition \ref{prop:DiagGaussDiag} that any element of $\wPn$ 
induces an action in $\AutC(\Fn)$.\footnote{This fact can also be checked directly, 
by a straightforward adaptation of Rem. \ref{rem:star2} and Prop. \ref{prop:DiagGaussDiag} to welded string links. }
But $\AutC(\Fn)$ is isomorphic to the group $PUR_n$, according to Theorem 2.6 of \cite{WKO1}, and a presentation of the latter 
is given by monotone virtual string links up to monotone transformations \cite{BH}. 
Since two equivalent monotone welded string links induce the same action on $\Fn$, they are related by a
monotone transformation (this is the same argument as in Remark \ref{rem:star}).
It follows that $\wPn$ is isomorphic to the welded pure braid group studied, for instance, in \cite{WKO1}.  
In other words, we have that the  welded pure braid group injects into $\wSLn$. 
On that account, we will freely call welded pure braids the elements of $\wPn$. 
\end{remarque}

\begin{remarque} \label{rem:VirtualBraids}
The subset of $\vSLn$ whose elements admit a monotone representative is, of course, also defined. 
It is a group for the stacking product, 
which maps surjectively onto the  virtual pure braid group, introduced in \cite{Bardakov}.
However, the injectivity of this map remains an open question.
Indeed, unlike in the welded case addressed in Remark \ref{rem:wTBW} above, 
it is still unknown whether a sequence of isotopies and generalized Reidemeister moves can always be
modified into a monotone transformation.
\end{remarque}

\begin{defi}\label{def:selfvirtualization}
  Two virtual string link diagrams are related by a \emph{self-virtualization}
  if one can be obtained from the other by turning a classical self-crossing 
  (i.e. a classical crossing where the two preimages belong to the same component) into a virtual one.
  We call $\sv$--equivalence the equivalence relation on $\wSLn$ generated by self-virtualization. \\
  We denote  by $\wSLHn$ the quotient of $\wSLn$ under $\sv$--equivalence, which is compatible
  with the stacking product.
  We also denote by $\wPHn\subset\wSLHn$ the subset of elements having a monotone representative.
\end{defi}

As we will see in Section \ref{ref:CorrTTleMonde}, we have the following result as a straightforward consequences of  Theorem \ref{th:wGD=wGP}.
 \begin{theo} \label{th:wSLh=wPh}
    Every welded string link is monotone up to self-virtualization. 
  \end{theo}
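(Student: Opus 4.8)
The plan is to derive the statement from its purely combinatorial avatar, Theorem~\ref{th:wGD=wGP}, through the correspondence between welded string links and Gauss diagrams developed in Section~\ref{sec:gauss-diagrams} and used systematically in Section~\ref{ref:CorrTTleMonde}. Recall that a welded string link on $n$ components is faithfully described by a Gauss diagram on $n$ oriented intervals --- $n$ segments carrying signed directed chords that record the \emph{classical} crossings only (virtual crossings leave no trace) --- with equivalence of welded string links translating into a finite list of ``Gauss moves'', the Gauss-diagram incarnations of the generalized Reidemeister moves and of the $\OC$ move (Proposition~\ref{prop:DiagGaussDiag}). Two points of this dictionary are crucial here. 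First, an element of $\wSLn$ is monotone, i.e.\ lies in $\wPn$, exactly when its Gauss diagram is \emph{braid-like} (realizable by a monotone diagram); this is what characterizes the subset $\wGPn\subset\wGDn$. Second, a self-virtualization --- which turns a classical self-crossing into a virtual one --- amounts on the Gauss-diagram side to \emph{erasing a single chord both of whose endpoints lie on the same interval} (a ``self-chord''), and conversely.

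With these translations in hand, ``every welded string link is monotone up to self-virtualization'' --- equivalently, $\wPHn=\wSLHn$ --- becomes the bare assertion that any Gauss diagram can be transformed, by finitely many Gauss moves and erasures of self-chords, into a braid-like one. This is precisely Theorem~\ref{th:wGD=wGP}. So the proof of Theorem~\ref{th:wSLh=wPh} reduces to verifying the two dictionary entries above, which is a routine inspection of the definitions, and then quoting Theorem~\ref{th:wGD=wGP}.

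For completeness, let me indicate the shape of the argument for Theorem~\ref{th:wGD=wGP}, where the real content sits. I would comb the intervals one at a time, say from $I_n$ down to $I_1$. Given a Gauss diagram $G$, first erase every self-chord carried by $I_n$ (legitimate by self-virtualization); afterwards no chord has both endpoints on $I_n$. The endpoints of chords lying on $I_n$ are then of two kinds, according to whether $I_n$ is the over- or the under-strand at the corresponding crossing; the $\OC$-move, read on Gauss diagrams, allows two consecutive endpoints of one of these kinds to be transposed along $I_n$, whereas there is no such move for the other kind --- the latter being exactly the forbidden $\UC$ move. Using this, together with the Reidemeister-type Gauss moves, one rearranges the endpoints on $I_n$ into braid-like position, a stuck configuration always being resolvable either by an $\OC$-transposition, or by the cancellation of an innermost pair of chords (an RII-type move), or having already been removed as a self-chord. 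One then proceeds to $I_{n-1}$, and so on inductively.

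The main obstacle is this inductive step: making a single interval braid-like is straightforward, but one must ensure that combing the lower intervals $I_{n-1},\dots,I_1$ does not undo the braid-like arrangement already obtained above --- i.e.\ that the moves reorganizing a lower interval can be localized away from the endpoints already placed, or absorbed into them without creating new defects. This bookkeeping is the combinatorial heart of Theorem~\ref{th:wGD=wGP}; it is the Gauss-diagram/welded analogue of the mechanism by which Habegger and Lin show that every string link is link-homotopic to a pure braid~\cite{HL}, the asymmetry between the two endpoint kinds ($\OC$ permitted, $\UC$ forbidden) playing the role of the fact that overpasses may be combed freely there.
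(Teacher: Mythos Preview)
Your reduction of Theorem~\ref{th:wSLh=wPh} to Theorem~\ref{th:wGD=wGP} via the welded-string-link/Gauss-diagram dictionary is correct and is exactly the paper's argument: the paper states the theorem as a ``straightforward consequence'' of Theorem~\ref{th:wGD=wGP} through the correspondences of Proposition~\ref{prop:DiagGaussDiag} (monotone $\leftrightarrow$ horizontal, self-virtualization $\leftrightarrow$ self-arrow removal).

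Your supplementary sketch of Theorem~\ref{th:wGD=wGP}, however, diverges from the paper's proof and is not quite right as stated. The paper does argue by induction on $n$, but the inductive scheme is the reverse of yours: it \emph{removes} a strand $I$, applies the induction hypothesis to the remaining $n$ strands to make them horizontal, and then lifts that sequence of moves back to the full diagram while pushing the obstructing $I$-arrows out of the way. The crucial mechanism is not the bare TC/$\OC$ transposition you describe, but the $\Ct$ moves of Figure~\ref{fig:C3Moves}: commuting a head past another arrow is \emph{not} free --- it costs an additional arrow connected to a far-away strand (Corollary~\ref{cor:Bunches}). Your sketch, relying on ``transposing two consecutive endpoints of one kind'' plus RII cancellations, does not account for these extra arrows, and your notion of putting a single strand ``into braid-like position'' is not well-defined, since horizontality is a global condition on the diagram, not a per-strand one. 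The paper handles your ``main obstacle'' (not undoing previous work) precisely by funnelling all the added arrows onto the distinguished strand $I$, then separating the diagram as $G_I\bullet\tB$ and finishing $G_I$ with $\Cd$ moves --- a bookkeeping step your outline does not supply.
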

Furthermore, Theorem \ref{th:wGP=AutC} implies immediately the following classification result for welded string links up to self-virtualization.    
  \begin{theo} \label{th:wSLh=AutC}
    $\wSLHn\cong \wPHn\cong\AutC(\RFn)$ as monoids. 
  \end{theo}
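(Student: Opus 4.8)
The plan is to prove Theorem \ref{th:wSLh=AutC} as a direct consequence of the two main Gauss-diagram-level theorems announced earlier, namely Theorem \ref{th:wGD=wGP} and Theorem \ref{th:wGP=AutC}, together with the faithful dictionary between welded string links and Gauss diagrams. The statement packages three objects: $\wSLHn$, $\wPHn$, and $\AutC(\RFn)$, and asserts they are all isomorphic as monoids. So the strategy is to establish the two isomorphisms $\wSLHn\cong\wPHn$ and $\wPHn\cong\AutC(\RFn)$ separately, check that each respects the stacking product, and then compose.

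\textbf{Step 1: $\wSLHn\cong\wPHn$.} By Theorem \ref{th:wSLh=wPh} (itself a consequence of Theorem \ref{th:wGD=wGP}), every welded string link is monotone up to self-virtualization, so the inclusion $\wPHn\hookrightarrow\wSLHn$ is surjective. For injectivity, one needs that two monotone representatives which are $\sv$-equivalent in $\wSLHn$ are already $\sv$-equivalent through monotone objects; this is the kind of statement that Remark \ref{rem:wTBW} handles in the non-reduced setting, and the reduced case should follow by the same argument once the action on $\RFn$ is shown to be a complete invariant of $\wPHn$ --- which is exactly Step 2. So in fact the cleanest order is to do Step 2 first and deduce injectivity of $\wPHn\hookrightarrow\wSLHn$ from the fact that the composite $\wSLHn\to\AutC(\RFn)$ factors through $\wPHn$ and is injective.

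\textbf{Step 2: $\wPHn\cong\AutC(\RFn)$.} This is where the real content lies, and it is Theorem \ref{th:wGP=AutC}. At the combinatorial level a welded pure braid, read via the Wirtinger-type procedure of Proposition \ref{prop:DiagGaussDiag} and Remark \ref{rem:star2}, induces an automorphism of $\Fn$ conjugating each generator; passing to the reduced quotient gives a well-defined map $\wPHn\to\AutC(\RFn)$, and one checks it is a monoid homomorphism by the functoriality $\varphi_{L\bullet L'}=\varphi_L\circ\varphi_{L'}$ already noted for ribbon tubes (the diagrammatic analogue is identical). Surjectivity comes from realizing each basis-conjugating generator of $\AutC(\RFn)$ by an explicit welded pure braid. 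Injectivity --- that a welded pure braid acting trivially on $\RFn$ is $\sv$-trivial --- is the main obstacle and is where Theorem \ref{th:wGP=AutC}, proved by Gauss diagram techniques in Section \ref{sec:gauss-diagrams}, is essential; it is the welded/self-virtualization analogue of Habegger--Lin's theorem and cannot be shortcut.

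\textbf{Step 3: Assembling.} Once Step 2 gives $\wPHn\xrightarrow{\ \sim\ }\AutC(\RFn)$ as monoids, compose with the inclusion to get $\wPHn\hookrightarrow\wSLHn$; since the induced map $\wSLHn\to\AutC(\RFn)$ (well-defined because $\varphi$ is invariant under generalized Reidemeister, OC, and self-virtualization moves, the last being the content that self-crossings act by conjugation which is trivial in $\RFn$) restricts to an isomorphism on $\wPHn$ and is surjective by Theorem \ref{th:wSLh=wPh}, it must be an isomorphism and $\wPHn\hookrightarrow\wSLHn$ must be onto, hence an isomorphism. This yields the chain $\wSLHn\cong\wPHn\cong\AutC(\RFn)$ of monoid isomorphisms. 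The only mildly delicate bookkeeping is checking that each arrow is multiplicative for the stacking product, but this is routine given the functoriality of $\varphi$; the genuine difficulty is entirely concentrated in Theorem \ref{th:wGP=AutC}, which we invoke.
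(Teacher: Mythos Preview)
Your approach is the paper's: Theorem~\ref{th:wSLh=AutC} is stated there as an immediate consequence of Theorem~\ref{th:wGP=AutC} (together with Theorem~\ref{th:wGD=wGP}) via the dictionary of Proposition~\ref{prop:DiagGaussDiag}, which gives monoid isomorphisms $\wSLHn\cong\wGDHn$ and $\wPHn\cong\wGPHn$ directly.

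Your Step~1 detour on injectivity is unnecessary and stems from a misreading of the definitions. In Definition~\ref{def:selfvirtualization}, $\wPHn$ is \emph{defined} as the subset of $\wSLHn$ consisting of classes that admit a monotone representative; it is not a separate quotient of monotone diagrams by some ``monotone sv-equivalence''. Hence the inclusion $\wPHn\hookrightarrow\wSLHn$ is injective tautologically, and Theorem~\ref{th:wSLh=wPh} gives surjectivity, so $\wPHn=\wSLHn$ on the nose. With this observed, your Steps~2 and~3 collapse to the paper's one-line deduction, and the roundabout argument in Step~3 (recovering injectivity of the inclusion from the action on $\RFn$) is not needed.
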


The notions of self-virtualization and self-crossing change for welded string links and pure braids, as well as for their usual and virtual counterparts, is further studied in \cite{pdm}. 


\subsection{Fundamental group for welded string link}
Let $L$ be a virtual string link diagram.

Any strand of $L$ is cut into smaller pieces by the classical crossings.
More precisely, we call \emph{overstrand}, any piece of
strand of $L$ whose boundary elements consist of either a
strand endpoint or the lowest preimage of a classical crossing, and
such that it contains no other lowest preimage of any classical
crossing in its interior.
Note that any highest preimage of a classical crossing is contained in an overstrand.
We denote by $\Over(L)$ and $\Cross(L)$ the sets of, respectively,
overstrands of $L$ and classical crossings of $L$.

We orient all strands from $\p_0 L$ to $\p_1 L$.
To any $c\in\Cross(L)$, we associate, as show in Figure \ref{fig:RulePN}, 
\begin{itemize}
\item $\e_c$ the sign of $c$;
\item $s_c^0$ the overstrand containing the highest preimage
  of $c$;
\item $s_c^-$ the overstrand whose exiting boundary component
 is the
  lowest preimage of $c$;
\item  $s_c^+$ the overstrand whose entering boundary component
 is the
  lowest preimage of $c$.
\end{itemize}

\begin{defi}
  We define the \emph{fundamental group of $L$} as $\pi_1(L):=\big\langle \Over(L)  \ |\
  s_c^+=(s_c^-)^{(s_c^0)^{\e_c}}\textrm{ for all }c\in\Cross(L) \big\rangle$. 
  See Figure \ref{fig:RulePN} for an illustration.
\end{defi}

\begin{figure}
  \[
\xymatrix@!0@R=1.5cm@C=4.5cm{
&\dessin{2.2cm}{DWirt_p}\\
\dessin{2.5cm}{DWirt_0}\ar[ru]^-{\e_c=1}\ar[rd]_-{\e_c=-1}\\
&\dessin{2.2cm}{DWirt_m}
}
\]
  \caption{Local relations for $\pi_1(L)$}
  \label{fig:RulePN}
\end{figure}
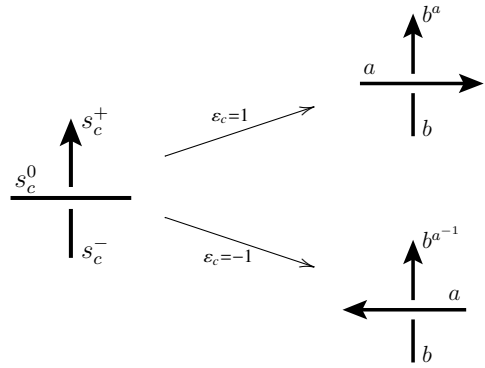

It is well-known that, up to isomorphism, the group associated to a virtual string link diagram is invariant under classical
Reidemeister moves. Kauffman proved that virtual and mixed Reidemeister moves do not change the group presentation \cite{Kauffman}.
It turns out that the ``virtual knot group'' is also invariant under Over Commute, and is thus a welded invariant, i.e.  
is well defined on $\wSLn$ \cite{Kauffman,Citare}.


\subsection{Relations with ribbon tubes}\label{sec:tubemap}

It was shown in Section \ref{sec:broken-surfaces} that 4--dimensional ribbon tubes can be described by 3--dimensional objects, namely symmetric broken surface diagrams.
Following \cite{Citare} and \cite{yajima}, it  is  also possible to  describe ribbon tubes using 2--dimensional welded string links.

Indeed, let $L$ be a welded string link diagram.
One can associate a symmetric broken surface diagram by embedding $B^2$ into
$B^3$ as $B^2\times\big\{\frac{1}{2}\big\}$ and considering 
a tubular neighborhood $N(L)$ of $L$ in $B^3$ so that $\p_\e N(L)=\psqcup_{i\in\UnN}D_i\times\{\e\}$ for $\e\in\{0,1\}$.
The boundary of $N(L)$ then decomposes as a union of $4$--punctured spheres, one for each
crossing. Then, according to the partial order on the associated crossing, we modify each
sphere as shown in Figure \ref{fig:Inflating}.
The result is a symmetric broken surface diagram, to which we can associate a ribbon tube $\Tube(L)$.

\begin{figure}[!h]
\[
\xymatrix@C=1.35cm@R=-2cm{
&&\vcenter{\hbox{\rotatebox{45}{$\dessin{2.5cm}{Infl_1}$}}}\\
\vcenter{\hbox{\rotatebox{45}{$\dessin{2.5cm}{Infl_v}$}}}&\vcenter{\hbox{\rotatebox{45}{$\dessin{2.5cm}{Infl}$}}}\ar[l]|(.48){\dessin{.8cm}{Cvir}}\ar[ru]|(.48){\dessin{.8cm}{Cpos}}\ar[rd]|(.48){\dessin{.8cm}{Cneg}}&\\
&&\vcenter{\hbox{\rotatebox{45}{$\dessin{2.5cm}{Infl_0}$}}}
}
\]
  \caption{Inflating classical and virtual crossings}
  \label{fig:Inflating}
\end{figure}
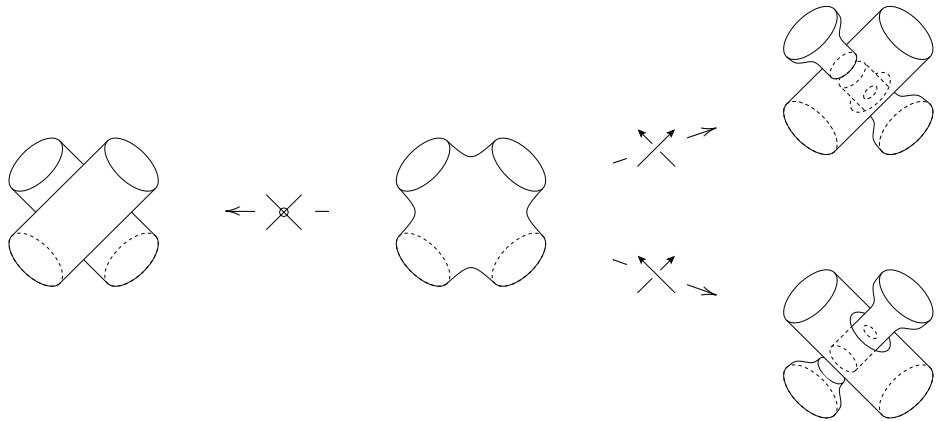

There is a one-to-one correspondence between
overstrands of $L$ and the outside annuli of
the associated symmetric broken surface diagram $S$, and another
one-to-one correspondence between classical crossings of $L$ and
inside annuli of $S$. Moreover, signs
associated to crossing in the definition of $\pi_1(L)$ also corresponds
to signs associated to inside annuli in the Wirtinger presentation of
$\pi_1\big(\Tube(L)\big)$ (compare Figures \ref{fig:BS_Signs} and \ref{fig:RulePN}).
Therefore we have

\begin{prop}[\cite{Citare, yajima}]\label{prop:DWirtinger}
For every welded string link diagram $L$, $\pi_1\big(\Tube(L)\big)\cong \pi_1(L)$.
\end{prop}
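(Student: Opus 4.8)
The plan is to compare the Wirtinger-type presentation of $\pi_1\big(\Tube(L)\big)$ given by Proposition \ref{prop:WirtingerProjection} with the presentation that \emph{defines} $\pi_1(L)$, and to verify that the two are literally the same. First I would note that the symmetric broken surface diagram $S$ produced from $L$ by the inflation procedure of Figure \ref{fig:Inflating} is, by construction, a broken surface diagram representing $\Tube(L)$, so Proposition \ref{prop:WirtingerProjection} applied to it gives
\[
\pi_1\big(\Tube(L)\big)\cong\big\langle\Out(S)\ \big|\ \alpha^+_\beta=(\alpha^-_\beta)^{(\alpha^0_\beta)^{\e_\beta}}\textrm{ for all }\beta\in\In(S)\big\rangle .
\]
The two one-to-one correspondences recalled just before the statement, between $\Over(L)$ and $\Out(S)$ and between $\Cross(L)$ and $\In(S)$, already identify the generating sets and index the relations by the same set, so it only remains to match the relations one crossing at a time.

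For this local step, I would look at the inflation of a single classical crossing (the two classical branches of Figure \ref{fig:Inflating}) side by side with the conventions of Figure \ref{fig:BS_Signs}, and check that, if $c$ is a classical crossing corresponding to the inside annulus $\beta$, then: the overstrand $s^0_c$ carrying the highest preimage of $c$ inflates to the outside annulus $\alpha^0_\beta$ that contains $\partial\beta$; the overstrand $s^-_c$ ending at the lowest preimage of $c$ inflates to $\alpha^-_\beta$ and the overstrand $s^+_c$ starting at it inflates to $\alpha^+_\beta$ (the orientation of the strand, which fixes the direction of the $I$-factor, being what singles out $C^-_\beta$, closer to $\partial_0S$, among the two components of $\beta\cap\alpha^0_\beta$ via the co--orientation order); and $\e_c=\e_\beta$. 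This last equality is precisely the claim that the sign conventions of Figures \ref{fig:BS_Signs} and \ref{fig:RulePN} match, and it is forced by the inflation rule: a positive crossing is inflated so that the preimage of $C^-_\beta$ in $\beta$ lies above the one in $\alpha^0_\beta$, whence $\e_\beta=+1=\e_c$, and symmetrically for a negative crossing. A virtual crossing contributes nothing on either side, since its inflation (the virtual branch of Figure \ref{fig:Inflating}) has no broken surface singularity, hence creates no inside annulus and does not subdivide any overstrand, matching the fact that virtual crossings do not enter the presentation of $\pi_1(L)$.

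With these identifications the relation $\alpha^+_\beta=(\alpha^-_\beta)^{(\alpha^0_\beta)^{\e_\beta}}$ turns into $s^+_c=(s^-_c)^{(s^0_c)^{\e_c}}$, so the two presentations coincide verbatim and the isomorphism follows; it sends an overstrand to the meridian around a point of the corresponding outside annulus near its boundary, so it is compatible with the Wirtinger meridian conventions on both sides. The only genuine work is the local matching of the previous paragraph, namely keeping track of the co--orientation order on $\beta$ against the entering/exiting convention for $s^\pm_c$ and of the two possible heights at the singular circle; this is a finite check performed by superposing Figures \ref{fig:Inflating}, \ref{fig:BS_Signs} and \ref{fig:RulePN}. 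I expect this orientation-and-sign matching to be the main (and essentially the only) obstacle, the remainder being formal.
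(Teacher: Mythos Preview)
Your proposal is correct and follows exactly the approach of the paper: the paragraph immediately preceding the proposition explains that the Wirtinger presentation of $\pi_1\big(\Tube(L)\big)$ from Proposition~\ref{prop:WirtingerProjection} and the defining presentation of $\pi_1(L)$ coincide, via the one-to-one correspondences $\Over(L)\leftrightarrow\Out(S)$ and $\Cross(L)\leftrightarrow\In(S)$ together with the matching of signs (comparing Figures~\ref{fig:BS_Signs} and~\ref{fig:RulePN}). Your write-up is in fact more detailed than the paper's, which simply records these bijections and the sign match and concludes.
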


But the $\Tube$ map is more than just a tool to compute fundamental
groups. Indeed, it provides a way to encode
ribbon tubes.
This has been pointed out by Satoh, but some key
ideas already appeared in early works \cite{yajima} of Yajima.
\begin{prop}[\cite{Citare}]\label{prop:Satoh}
The map $ \Tube: \wSLn \rightarrow \rTn $ is well defined and surjective.
\end{prop}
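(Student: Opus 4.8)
The plan is to prove the two assertions of Proposition~\ref{prop:Satoh} separately: well-definedness (the $\Tube$ map descends to the quotient by generalized Reidemeister and $\OC$ moves, and to isotopy of ribbon tubes) and surjectivity. For well-definedness, I would start from the construction preceding the statement: given a welded string link \emph{diagram} $L$, we thicken it to $N(L)\subset B^3$, decompose $\partial N(L)$ into $4$--punctured spheres (one per crossing), and perform the local modification of Figure~\ref{fig:Inflating} at each classical or virtual crossing to obtain a symmetric broken surface diagram $S$; by Lemma~\ref{lem:symmetric} (and the lemma of Yanagawa preceding it), $S$ is the projection of a unique ribbon tube, which we call $\Tube(L)$. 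The first thing to check is that $\Tube(L)$, \emph{as a ribbon tube}, does not depend on the planar isotopy class of the diagram $L$ but only on its image in $\wSLn$ --- i.e.\ that each of the generalized Reidemeister moves (classical RI, RII, RIIIa/b; virtual vRI, vRII, vRIII; mixed mRIIIa/b) and the $\OC$ move, when applied to $L$, changes $S$ into a symmetric broken surface diagram representing an isotopic ribbon tube.

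The key step here is to verify the moves locally. For the virtual and mixed moves, and for classical RII and RIII, the cleanest argument is the one already invoked in Remark~\ref{rk:Reidbrok}: after inflation, both sides of these moves can be realized inside the projection of a \emph{monotone} ribbon tube, so by Remark~\ref{remarquemoisie} (monotone ribbon tubes are motions of horizontal rings in $3$--space, i.e.\ elements of $\pi_1$ of a configuration space) the equivalence of the two sides reduces to the corresponding relations in the ring motion group $PUR_n$; these are precisely the defining relations checked in \cite{BH} and \cite{WKO1}. The $\OC$ move likewise inflates to a pair of pieces of tube, one passing entirely over the other, which can be pulled apart in $B^4$ --- this is the same mechanism used in Figure~\ref{fig:SBS_LM} for circle crossing changes --- so it does not change the ribbon tube. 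Classical RI is the delicate case: it cannot be modeled monotonically, and I would handle it exactly as Remark~\ref{rk:Reidbrok} suggests, either via Roseman moves in the sense of \cite{roseman} applied to the inflated picture, or by citing the argument of \cite{Jess}; concretely, inflating an RI kink produces a wen, and the square of a wen is trivial (\cite{KS}), while a single wen can be pushed off because the orientation of $\partial T$ is prescribed --- this is the same wen-cancellation argument used in the proof of Lemma~\ref{lem:symmetric}. One must also record that the stacking product is respected, so that $\Tube$ is a monoid homomorphism on $\wSLn$, but this is immediate from the construction since inflation is local. The boundary conditions $\partial A_i = C_i\times\{0,1\}$ with the correct orientations follow from the normalization $\partial_\e N(L)=\psqcup_i D_i\times\{\e\}$.

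For surjectivity, the argument is essentially a reading of Section~\ref{sec:broken-surfaces} backwards. Let $T\in\rTn$ be an arbitrary ribbon tube. By Lemma~\ref{lem:symmetric}, $T$ admits a symmetric broken surface diagram $S$ in $B^3$. Now I would use the structure of symmetric broken surface diagrams recalled in Remark~\ref{rk:inside-outside}: $S$ is the boundary of a thickening of a ribbon band $F=\psqcup_i F_i$, whose singular set is a disjoint union of $2$--dimensional ribbon singularities (flatly transverse intervals). Projecting the ribbon band $F$ generically onto $B^2$ --- equivalently, recording the height information of its singular intervals --- produces a welded string link diagram $L$: each ribbon singularity of $F$ becomes a classical crossing of $L$ (with sign read off from the local ordering $\e_\beta$, matching Figure~\ref{fig:BS_Signs} with Figure~\ref{fig:RulePN}), and the generic self-intersections introduced by the planar projection that carry no height data become virtual crossings. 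Inflating this $L$ by Figure~\ref{fig:Inflating} recovers, up to the equivalences already shown to be irrelevant, the symmetric broken surface diagram $S$, hence the ribbon tube $T$; thus $\Tube(L)=T$. Alternatively, and more concisely, one can simply cite \cite{Citare}: Satoh proved precisely that the Tube map is surjective, and our task reduces to observing that his construction matches ours. The main obstacle in a self-contained treatment is the bookkeeping in the surjectivity argument --- ensuring that every symmetric broken surface diagram genuinely arises from inflating \emph{some} welded diagram, i.e.\ that the correspondences ``overstrands $\leftrightarrow$ outside annuli'' and ``classical crossings $\leftrightarrow$ inside annuli'' of the discussion preceding the proposition can be run in reverse --- but this is exactly what the symmetry condition (Definition of symmetric broken surface diagram, together with Remark~\ref{rk:inside-outside}) is designed to guarantee, and no genuinely new idea beyond \cite{Citare,yajima} is needed.
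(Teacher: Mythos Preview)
Your overall architecture is correct and matches the paper's, but your treatment of well-definedness is more convoluted than necessary and contains a couple of confused side-remarks.

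The paper's argument for well-definedness is cleaner than yours: classical Reidemeister moves on the welded diagram correspond exactly to the broken-surface Reidemeister moves of Figure~\ref{fig:BS_ReidMoves}, and those are handled by Remark~\ref{rk:Reidbrok}. The key observation you miss is that virtual moves, mixed moves, and $\OC$ \emph{do not change the resulting broken surface diagram at all} (up to isotopy in $B^3$). This is because a virtual crossing inflates, via Figure~\ref{fig:Inflating}, to two tubes that simply pass by each other with no singularity; rearranging such non-interactions, or commuting them past a genuine ribbon singularity, is just an ambient isotopy in $B^3$. So there is no need to invoke monotony or the ring-motion group for those moves. Your appeal to Figure~\ref{fig:SBS_LM} for $\OC$ is misplaced: that figure concerns circle crossing changes (switching a local ordering), which is a different operation; $\OC$ requires no such $4$--dimensional pulling-apart. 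Likewise, your claim that ``inflating an RI kink produces a wen'' is not right --- no orientation reversal is involved in RI, and the inflated picture is simply the broken-surface RI of Figure~\ref{fig:BS_ReidMoves}; the paper handles it via Remark~\ref{rk:Reidbrok} (Roseman moves or \cite{Jess}), which you also mention as an alternative.

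For surjectivity, your argument is essentially the paper's: pass to a symmetric broken surface diagram via Lemma~\ref{lem:symmetric}, use the underlying ribbon band, and read off a welded diagram in which ribbon singularities become classical crossings and crossings arising only from the planar projection become virtual. The paper phrases this as contracting the band onto its core in $B^3$, taking a diagram, and then swapping classical/singular crossings for virtual/classical ones (Figure~\ref{fig:Contracting}); this is the same idea. One detail you omit is that, in order to match the sign convention of Figure~\ref{fig:Inflating}, some inside annuli may first have to be ``turned around'' by an isotopy (Figure~\ref{fig:TurningAround}); this is a bookkeeping point but worth recording.
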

\begin{figure}
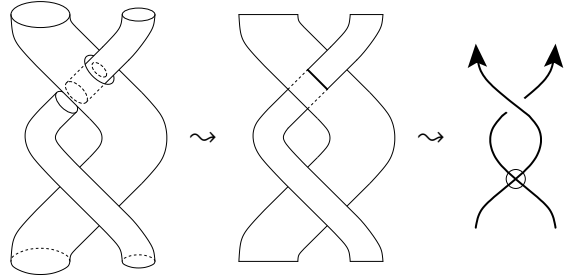

  \[
\dessin{4cm}{Contract_1}\leadsto \dessin{4cm}{Contract_2}\leadsto
\dessin{3cm}{Contract_4}
\]
  \caption{Contracting symmetric broken surface diagrams}
  \label{fig:Contracting}
\end{figure}
\begin{proof}
  It is easily seen that classical Reidemeister moves on welded diagrams
corresponds to Reidemeister moves on symmetric broken surfaces defined in Remark \ref{rk:Reidbrok}, 
and that virtual and mixed Reidemeister moves, as well as $\OC$,
preserves the associated broken surface diagram.  
It then follows from Remark \ref{rk:Reidbrok} that $\Tube$ is well defined
on $\wSLn$.

Now, a symmetric broken surface diagram $S$ can be seen as given by a ribbon band, 
in the sense of Figure \ref{fig:RibbonBands}. 
Contracting each band $I\times I$ onto its core $\big\{\frac{1}{2}\big\}\times I$
so that, at each ribbon singularity, the cores intersect transversally, yields a singular string
link. Let $D$ be a diagram for this singular string link.
By turning the classical crossings of $D$
into virtual ones and its singular crossings into classical with signs
corresponding to the initial local ordering on $S$, we obtain a welded
string link diagram which is sent to $S$ by the above process. This
operation is illustrated in Figure \ref{fig:Contracting}.

Note however that some inside annuli of $S$ may have to be turned
around so that the sign rule given in Figure \ref{fig:Inflating} can be reversed, 
as illustrated in Figure \ref{fig:TurningAround}.  
Surjectivity of the $\Tube$ map follows then from Lemma \ref{lem:symmetric}.
\end{proof}

\begin{figure}[!h]
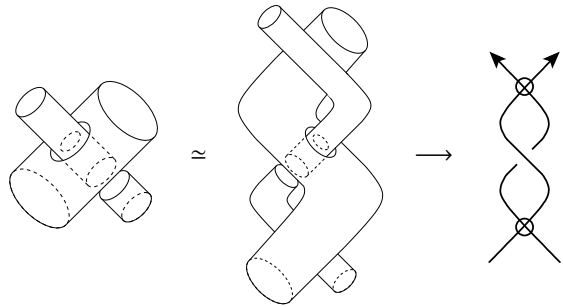

  \[
\dessin{2cm}{BadSens}\hspace{.3cm} \simeq \hspace{.3cm} \dessin{4cm}{OkSens}\hspace{.3cm} \longrightarrow \hspace{.3cm} \dessin{3cm}{OkDiag}
\]
  \caption{Turning inside annuli around}
  \label{fig:TurningAround}
\end{figure}

\begin{remarque}\label{rem:injectivityTube}
  The injectivity of the  $\Tube$ map is still an open question; see \cite{WKO1,WKO2}.\footnote{In the knot case, the $\Tube$ map is known to have nontrivial kernel; see for example \cite{IK}. } 
  However, Brendle and Hatcher proved in \cite{BH} that it is an isomorphism when
  restricting to monotone objects on both sides.
  This can be compared with the following proposition,
  although it is a not a consequence of Brendle--Hatcher's result since link-homotopy is inherently a non-monotone transformation.
\end{remarque}

\begin{prop}\label{prop:DBraidIso}
  The map $ \Tube: \wSLHn \rightarrow \rTHn $ is a well defined group isomorphism.
\end{prop}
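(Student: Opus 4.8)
The plan is to assemble the isomorphism from pieces already established in the excerpt, and to identify the one new ingredient that genuinely requires proof, namely that the $\Tube$ map descends to the link-homotopy quotients and that the resulting map is bijective. First I would recall that $\Tube:\wSLn\to\rTn$ is well defined and surjective by Proposition \ref{prop:Satoh}. To see that it descends to $\wSLHn\to\rTHn$ I would check compatibility with the generating moves: a self-virtualization on a welded string link diagram turns a classical self-crossing into a virtual one, and under the inflating procedure of Figure \ref{fig:Inflating} this replaces an inside annulus (lying entirely above or below a parallel piece of the \emph{same} tube component) by a configuration with no singular circle at all; equivalently, applying the self-circle crossing change of Proposition \ref{def:CircleChange} followed by the isotopy pushing the two tube pieces apart (Figure \ref{fig:SBS_LM}) realizes exactly this effect on $\Tube(L)$. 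Since, by Proposition \ref{def:CircleChange}, self-circle crossing changes generate the link-homotopy relation on ribbon tubes, $\Tube$ induces a well-defined monoid homomorphism $\wSLHn\to\rTHn$, and it is a group homomorphism once both sides are known to be groups (the target by the Corollary following Theorem \ref{thm:tube_braids}, the source because $\wSLHn$ is a group by Theorem \ref{th:wSLh=AutC}).

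The core of the argument is then to compare this induced map with the two classifying isomorphisms. On the diagrammatic side, Theorem \ref{th:wSLh=AutC} gives $\wSLHn\cong\AutC(\RFn)$, and on the topological side Theorem \ref{th:Iso} gives $\varphi:\rTHn\xrightarrow{\ \sim\ }\AutC(\RFn)$, $T\mapsto\varphi_T$. I would show that the triangle formed by these two maps and $\Tube$ commutes, i.e.\ that for every welded string link diagram $L$ one has $\varphi_{\Tube(L)}$ equal to the conjugating automorphism of $\RFn$ assigned to $L$ by Theorem \ref{th:wSLh=AutC}. This is essentially Proposition \ref{prop:DWirtinger} read at the level of reduced fundamental groups together with the compatibility of the Wirtinger presentations: the one-to-one correspondences between overstrands of $L$ and outside annuli of the associated symmetric broken surface diagram, and between classical crossings and inside annuli (with matching signs, comparing Figures \ref{fig:BS_Signs} and \ref{fig:RulePN}), identify the Wirtinger presentation of $\pi_1(L)$ with that of $\pi_1(\Tube(L))$ in a way that carries meridians to meridians; passing to the reduced quotients via Proposition \ref{prop:ReducedIso} and tracking the boundary identifications shows that the map "reading top meridians in terms of bottom meridians" agrees on both sides. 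Once this triangle commutes, $\Tube:\wSLHn\to\rTHn$ is a composite of two isomorphisms (one of them the inverse of $\varphi$), hence an isomorphism.

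The step I expect to be the main obstacle is verifying cleanly that $\Tube$ descends to the link-homotopy quotient, i.e.\ that a self-virtualization does not change the link-homotopy class of the associated ribbon tube. The subtlety is that the inflating construction of Figure \ref{fig:Inflating} produces, from a classical self-crossing, an inside annulus, and one must argue that \emph{when the crossing is a self-crossing} this inside annulus can be removed by a self-circle crossing change together with a genuine $B^4$-isotopy (pushing the two tube pieces of the same component apart) rather than by an arbitrary homotopy — this is exactly the content of the discussion around Figure \ref{fig:SBS_LM}, but one must be careful that the ambient move stays within the class of ribbon tubes and self-singular ribbon tubes. A secondary, more bookkeeping-heavy point is keeping the orientations and the "turning inside annuli around" issue of Figure \ref{fig:TurningAround} under control when identifying the two Wirtinger presentations, so that the sign $\e_\beta$ of an inside annulus really matches the sign $\e_c$ of the corresponding crossing; but this is already implicit in Propositions \ref{prop:WirtingerProjection}, \ref{prop:DWirtinger} and requires only a careful comparison of the figures.
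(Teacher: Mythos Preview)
Your proposal is correct and follows essentially the same route as the paper. The paper's proof reads: self-virtualization corresponds to a self-circle crossing change (Figure~\ref{fig:SBS_LM}), surjectivity comes from Proposition~\ref{prop:Satoh}, and injectivity from Corollary~\ref{cor:TubeInj} together with Proposition~\ref{prop:DiagGaussDiag}. Your commuting-triangle argument is precisely the content of Lemma~\ref{lem:FVarphi} ($\fDA=\varphi\circ\Tube$), which is exactly what underlies Corollary~\ref{cor:TubeInj}; the only cosmetic difference is that the paper passes through the Gauss-diagram avatar $\wGDHn$ rather than invoking Theorems~\ref{th:Iso} and~\ref{th:wSLh=AutC} by name, but the substance is identical.
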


\begin{proof}
  It is easily seen that a self-virtualization corresponds to a
  self-circle crossing change, see Figure \ref{fig:SBS_LM}.
It is a surjective group homomorphism by
Proposition \ref{prop:Satoh}. Injectivity is immediate after
Corollary \ref{cor:TubeInj} and Proposition \ref{prop:DiagGaussDiag}.
\end{proof}


\section{Gauss diagrams}
\label{sec:gauss-diagrams}
Our main tool for the study of ribbon tubes and welded knotted objects is the theory of Gauss diagrams \cite{F,GPV,PV}. 

\subsection{Definitions}

\begin{defi}
A \emph{Gauss diagram} is a set of signed and oriented (thin) arrows
between points of $n$ ordered and oriented vertical (thick) strands,
up to isotopy of the underlying strands. Endpoints of arrows are
called \emph{ends} and are divided in two parts, \emph{heads} and
\emph{tails},  defined by the orientation of the arrow (which goes by
convention from the tail to the head).
\end{defi}
This definition is illustrated on the left-hand side of Figure \ref{fig:DGD_Corr}. 

For all $i\in\UnN$, we will denote the $i^\textrm{th}$ strand by $I_i$.
An arrow is said to be \emph{connected} to a strand if
it has an end on this strand.
An arrow having both ends on the same strand is called a
\emph{self-arrow}.

There is a natural stacking product operation, denoted by $\bullet$, for Gauss
diagrams defined by gluing the top endpoints of the strands of the first summand to
the bottom endpoints of the strands of the second.

\begin{defi}
  A Gauss diagram is said to be \emph{horizontal} if all of its arrows are horizontal.
\end{defi}

\begin{remarque}
An alternative definition for a Gauss diagram $G$ being horizontal
would be to ask for a global order on the set of its arrows such that
if two arrows $a_1$ and $a_2$ with $a_1\leq a_2$ have ends $e_1$ and $e_2$ on a same
strand, then $e_1$ is below $e_2$ on this strand.
Note in particular that it forbids self-arrows. In the proof of Theorem \ref{th:wGD=wGP}, we shall refer to this global order.
\end{remarque}

\begin{defi}\label{def:Rmoves}
  Two Gauss diagrams are equivalent if 
  they are related by a finite sequence of the following moves, called
  \emph{Reidemeister moves}:
  \[
   \Ru:\ \dessin{2.5cm}{R1_2}
    \leftrightarrow \dessin{2.5cm}{R1_3}
\hspace{.7cm}
\Rd:\ \dessin{2.5cm}{R2_1} \leftrightarrow \dessin{2.5cm}{R2_2}
\hspace{.7cm}
\Rt:\ \dessin{2.5cm}{R3_1} \leftrightarrow \dessin{2.5cm}{R3_2}
  \]
  Here, all vertical lines are pieces of strands which
  may belong to the same strand or not, and can be oriented upward or
  downward; each $\e_*$, for $*\in\{\emptyset,1,2,3\}$, is
  either $1$ or $-1$. 
  Moreover, there is an additional condition for applying move R3: it is required that 
  $\tau_1\varepsilon_1=\tau_2\varepsilon_2=\tau_3\varepsilon_3$, where
  $\tau_i=1$ if the $i^\textrm{th}$ strand (from left to right) is
  oriented upwards, and $-1$ otherwise. 

  We denote by $\vGDn$ the quotient of Gauss diagrams up to isotopy
  and Reidemeister moves, which is compatible with the
  stacking product. 
\end{defi}

For the study of ribbon tubes and welded string links, we consider the following quotient of Gauss diagrams.

\begin{defi}
We define the Tail Commute (TC) move as
\[
\TC:\ \dessin{2.5cm}{W_1}\ \longleftrightarrow\ \dessin{2.5cm}{W_2},
\]
\noi where $\varepsilon,\eta\in\{\pm1\}$.  
  We denote by $\wGDn:=\fract{\vGDn}/{\TC}$ the quotient of $\vGDn$
  by relation TC, which is compatible with the
  stacking product. We call its elements
  \emph{welded Gauss diagrams}.
  We also denote by $\wGPn\subset\wGDn$ the subset of elements which
  have a horizontal representative.
\end{defi}

\begin{remarque}
  In the welded case, that is when quotiented by TC, the condition
  $\tau_1\varepsilon_1=\tau_2\varepsilon_2=\tau_3\varepsilon_3$ for 
  move R3 can be simplified to $\varepsilon_2\varepsilon_3=\tau_2\tau_3$. 
  In other words, the signs of the two arrows which have their tails on the same
  strand should agree if and only if the two others strands have parallel orientations.
\end{remarque}

The following can be easily checked.
\begin{prop}
  The set $\wGPn$ is a group for the stacking product.
\end{prop}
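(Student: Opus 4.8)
The plan is to exhibit explicitly the group structure on $\wGPn$ by the same routine verification used for the analogous statements about $\wPn$ and $\rPn$ earlier in the paper, but working directly at the level of horizontal Gauss diagrams. The key observation is that a horizontal Gauss diagram is a finite ordered sequence of ``elementary generators'' $g$, each of which is a single signed arrow with a specified pair of (distinct) strands carrying its tail and head and a specified relative height; the stacking product $\bullet$ simply concatenates these sequences. Since the trivial Gauss diagram (no arrows) is clearly a two-sided unit for $\bullet$ and $\bullet$ is associative on the nose (concatenation of sequences is associative, and this descends to the quotient by Reidemeister and TC moves), the only thing to prove is the existence of inverses.

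First I would fix an element $[G]\in\wGPn$ and choose a horizontal representative $G$, written as a word $g_1 g_2\cdots g_k$ in elementary generators as above. I would then define $\overline{G}:=\overline{g_k}\,\overline{g_{k-1}}\cdots\overline{g_1}$, where $\overline{g}$ denotes the same arrow as $g$ but with the opposite sign. This $\overline{G}$ is again horizontal, hence represents an element of $\wGPn$. The claim is that $G\bullet\overline{G}$ is equivalent to the trivial Gauss diagram. This is proved by induction on $k$: the innermost pair $g_k\,\overline{g_k}$ consists of two parallel arrows between the same two strands with opposite signs and adjacent heights, which is exactly the configuration removed by an R2 move (one checks the R2 hypotheses are met since the two arrows are genuinely parallel), so $g_k\bullet\overline{g_k}$ cancels; then one is reduced to $g_1\cdots g_{k-1}\bullet\overline{g_{k-1}}\cdots\overline{g_1}$ and the induction applies. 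Symmetrically $\overline{G}\bullet G$ is trivial, so $[\overline{G}]$ is a two-sided inverse of $[G]$.

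The one point requiring a little care — and the step I expect to be the main obstacle — is checking that the R2 cancellation of $g_j\bullet\overline{g_j}$ can legitimately be performed \emph{inside} the horizontal diagram $g_1\cdots g_{j-1}\bullet g_j\overline{g_j}\bullet\overline{g_{j-1}}\cdots\overline{g_1}$, i.e. that the two arrows to be cancelled are genuinely adjacent along both of their strands so that no other arrow-end is interleaved between them. Here the horizontality hypothesis is essential: in a horizontal diagram the arrows are totally ordered and $g_j,\overline{g_j}$ are consecutive in that order by construction, so on each of the two strands they touch, their (equal-height, up to the infinitesimal separation) ends are consecutive; any potentially obstructing end belonging to some $g_i$ or $\overline{g_i}$ with $i<j$ lies entirely on one side in the order, hence can be slid clear by an isotopy of the underlying strands. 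Once this is granted the R2 move applies verbatim, and the induction goes through; well-definedness of $[\overline G]$ independently of the chosen representative $G$ follows since it is forced to be the inverse of $[G]$ in the monoid, which is unique. This completes the proof that $\wGPn$ is a group.
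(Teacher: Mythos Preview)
Your argument is correct and is precisely the natural verification the paper has in mind: the paper gives no proof here (it only says ``the following can be easily checked''), and for the analogous Proposition on $\rPn$ it constructs the inverse by time-reversal, which at the Gauss-diagram level is exactly your $\overline{G}=\overline{g_k}\cdots\overline{g_1}$ followed by iterated R2 cancellations. Your care about adjacency is warranted but, as you note, horizontality makes it automatic; so there is nothing to add.
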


\begin{defi}
  Two Gauss diagrams are
  related by a \emph{self-arrow} move, denoted by $\SA$,
if one can be obtained from the other by removing a self-arrow.\\
  We denote by $\wGDHn$ the quotient of $\wGDn$ by $\SA$, which is compatible
  with the stacking product.
  We also denote by $\wGPHn\subset\wGDHn$ the subset of elements which
  have a horizontal representative.\\
Two elements of $\wGDn$ are said to be \emph{$\sa$--equivalent} if 
they are sent to the same element in $\wGDHn$.
\end{defi}

\subsection{Commutation of arrows}

In this section, we address the notion of commutation of arrows, 
which means swapping the relative position of two adjacent arrow ends on a strand. 
The Tail Commute move is a special case of such a commutation, when both ends are tails. 

More generally, and even in the non welded case, commutations of arrows can always be performed at the cost of some additional surrounding arrows.    
In the welded case, these additional arrows can be conveniently positioned, as shown below: 

\begin{defi}
  The $\textrm{C}^3$ moves are local moves on Gauss diagrams defined in three versions, shown in Figure \ref{fig:C3Moves}.
  There, $\e$ and $\eta$ are either $1$ or $-1$, all strands are assumed to be simultaneously upward or downward oriented, 
  and non oriented arrows can have either orientation.
\end{defi}

\begin{figure}
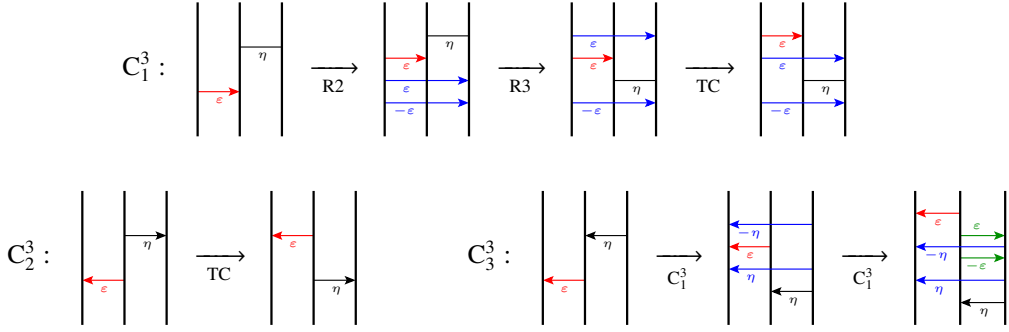

  \[
  \begin{array}{c}
    \hspace{-.6cm}
\Ctu:\ \dessin{2.5cm}{C2_1}\xrightarrow[\ \Rd\
      ]{}\dessin{2.5cm}{C2_2}\xrightarrow[\ \Rt\
      ]{}\dessin{2.5cm}{C2_3}\xrightarrow[\ \TC\
      ]{}\dessin{2.5cm}{C2_4}\\
    \begin{array}{ccc}
      \Ctd:\ \dessin{2.5cm}{C1_1}\xrightarrow[\ \TC\
      ]{}\dessin{2.5cm}{C1_2}&\hspace{.5cm}&
   \Ctt:\ \dessin{2.5cm}{C4_1}\xrightarrow[\ \Ctu\ ]{}\dessin{2.5cm}{C4_3}\xrightarrow[\ \Ctu\ ]{}\dessin{2.5cm}{C4_4}
    \end{array}
  \end{array}
  \]
\caption{The $\textrm{C}^3$ moves on welded Gauss diagrams}
\label{fig:C3Moves} 
\end{figure}

The $\textrm{C}^3$ moves can be seen as pushing one arrow accross another, at the cost of several additional arrows located below the pushed one.
Note that $\textrm{C}^3$ moves are supported by three pieces of strands, which may or may not belong to the same components.  

By iterated $\textrm{C}^3$ moves, one has the following general commutation rule: 
\begin{cor}\label{cor:Bunches}
 In $\wGDn$, commuting an arrow with a bunch of adjacent arrows all connected to a far away strand 
 is achieved at the cost of additional arrows connected to the far away strand, as shown below:
\[
\dessinH{2cm}{Bunch1}\ \longrightarrow \dessinH{2cm}{Bunch2}
\]
where $\dessin{1cm}{BoxNoI}$ denotes a bunch of arrows (not necessarily with the same number of arrows), 
$\varepsilon$ is either $1$ or $-1$, and all orientations are arbitrary.
\end{cor}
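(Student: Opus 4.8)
The plan is to prove the statement by induction on the number of arrows making up the bunch, the elementary step being a single $\textrm{C}^3$ move from Figure \ref{fig:C3Moves}. Recall that, by construction, each of $\Ctu$, $\Ctd$ and $\Ctt$ pushes one arrow end past a single adjacent arrow end on a strand, at the price of a few new arrows that are located below the moved end and that all reach out to the third strand involved in the move. Hence, when the bunch consists of a single arrow connected to the far away strand, the assertion is nothing but an instance of the appropriate version of the $\textrm{C}^3$ move; which version to use is dictated by the head/tail nature of the two ends being commuted, and in the degenerate case of two tails it is simply the $\TC$ move itself, with no new arrow needed.

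For the inductive step, write $b_1,\dots,b_m$ for the arrows of the bunch, indexed along the strand in the order in which the moving end of our arrow, call it $a$, will meet them. Using the base case, first commute the relevant end of $a$ past the end of $b_1$: this is one $\textrm{C}^3$ move, after which $a$ has crossed $b_1$ and a (possibly empty) packet of new arrows, all connected to the far away strand, has appeared \emph{below} the moved end of $a$, hence below the still-untouched ends of $b_2,\dots,b_m$. We may therefore repeat the argument, commuting $a$ successively past $b_2$, then $b_3$, and so on, each step consuming one more $\textrm{C}^3$ move and only creating further arrows connected to the far away strand and lying below $a$. After $m$ steps the chosen end of $a$ has been commuted past the whole bunch, and the total cost is the superposition of the packets produced at the individual steps, which is again a bunch of arrows connected to the far away strand. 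In particular the output bunch need not have the same cardinality as the input one, as the statement indicates.

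The only genuine work, and the main (mild) obstacle, is the bookkeeping of orientations and of heads versus tails, so as to be entitled to invoke a $\textrm{C}^3$ move at each step: a priori the commuted end of $a$ may be a head or a tail, the end of $b_\ell$ on the same strand may be a head or a tail, and the three strands carry orientations that must fit the hypotheses of the $\Rd$ and $\Rt$ moves hidden inside $\Ctu$. All these configurations are covered by combining the three versions of the $\textrm{C}^3$ move with the standard symmetries of Gauss diagrams (reflecting a strand reverses its orientation and exchanges heads and tails); moreover, in the welded setting the $\Rt$ condition simplifies to $\varepsilon_2\varepsilon_3=\tau_2\tau_3$, so the needed instance of $\Rt$ is always available. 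Finally, that the newly created arrows really do sit below the moved end of $a$, which is what makes the iteration go through without the new arrows ever obstructing a later commutation, is read off directly from the right-hand sides in Figure \ref{fig:C3Moves}.
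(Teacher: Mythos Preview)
Your proof is correct and follows essentially the same approach as the paper: both argue by iterated application of the $\textrm{C}^3$ moves, noting that the new arrows always land on the far away strand and below the moved end, so the iteration proceeds unobstructed. The paper's version is more terse and handles the orientation issue by saying the final step of $\Ctu$ and $\Ctt$ must be adapted when the two local strands are not co-oriented; your appeal to diagram symmetries serves the same purpose, though note that reflecting a strand does \emph{not} exchange heads and tails of arrows (those are intrinsic to the arrow), so that particular remark is slightly off---the correct point is simply that the relevant $\Rt$ instance is still available in the welded setting thanks to the simplified condition $\varepsilon_2\varepsilon_3=\tau_2\tau_3$.
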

\begin{proof}
  If the two strands are simultaneously upward or downward, it is a direct applications of the $\textrm{C}^3$ moves as described in Figure \ref{fig:C3Moves}, where the far away strand is the rightmost one. If not, the final step of moves $\textrm{C}^3_1$ and $\textrm{C}^3_3$ have to be changed consequently.
\end{proof}

When working up to SA, and dealing with arrows whose ends are on the same two strands, a genuine commutation result holds (without additional arrows), 
even in the non welded case. We shall refer to such a commutation as a $\textrm{C}^2$ move.
\begin{prop}
  Up to self-arrow moves, two arrows with adjacent ends can commute whenever the other two ends are on the same strand.
\end{prop}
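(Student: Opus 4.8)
The plan is to derive this strict commutation --- a $\Cd$ move --- from the commutation-with-extra-arrows already at our disposal, namely the $\Ct$ moves of Figure \ref{fig:C3Moves} and their consequence Corollary \ref{cor:Bunches}, and then to use the hypothesis to see that here the extra arrows produced are all self-arrows, so that they vanish up to $\SA$.

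First I would fix notation: let $a$ and $b$ be the two arrows, let $I_p$ be the strand carrying the two adjacent ends we wish to swap, and let $I_q$ be the strand carrying the two remaining ends. If one of $a,b$ is a self-arrow it may simply be erased by an $\SA$ move, and there is nothing to commute; so assume neither is, which forces $I_p\neq I_q$ and makes both $a$ and $b$ run from $I_p$ to $I_q$. If the two adjacent ends on $I_p$ are both tails, the statement is exactly the $\TC$ move. The remaining configurations (head/head, head/tail, tail/head) are finitely many once one also fixes the two signs and the relative orientation of the strand pieces, and, up to reversing an arrow or reflecting the picture, they are all treated in the same way: one pushes $a$ past $b$ using the appropriate $\Ct$ move, that is, Corollary \ref{cor:Bunches} with the ``bunch'' reduced to the single arrow $b$ and with $I_q$ in the role of the far-away strand. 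This produces the desired swapped position of the two ends on $I_p$ at the cost of a collection of additional arrows; tracking the defining sequence of $\Rd$, $\Rt$ and $\TC$ moves of the $\Ct$ move, each such additional arrow joins the portion of $I_q$ carrying the end of $a$ to the portion of $I_q$ carrying the end of $b$. Since both portions lie on the single strand $I_q$, every additional arrow is a self-arrow and can be deleted by an $\SA$ move, leaving precisely $a$ and $b$ in swapped position. (The non-welded case is handled identically, using the analogous --- less controlled --- commutation rule: one only needs the extra arrows to be self-arrows, which is still the case.)

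The main obstacle is exactly the verification of this last point, that the arrows created by the $\Ct$ moves have \emph{both} ends on $I_q$ and not one end on $I_p$ and one on $I_q$. This is a direct but somewhat tedious inspection of the local pictures of Figure \ref{fig:C3Moves} in each of the remaining head/tail cases, and it is precisely where the hypothesis enters: because the two ``far'' ends of $a$ and $b$ lie on one and the same strand, the two ``far'' strand-portions appearing in the $\Ct$ move belong to a single strand, which is what turns the newly created arrows into self-arrows.
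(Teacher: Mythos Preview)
Your approach is genuinely different from the paper's, and while it is plausible in the welded setting, it leaves a gap precisely where the proposition is strongest.

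The paper does not go through the $\Ct$ machinery at all. Instead it argues directly: on the ``far'' strand $I_q$ one \emph{adds} a single self-arrow, chosen so that together with $a$ and $b$ it forms a valid $\Rt$ configuration (the freedom in orientation, sign, and position of the added self-arrow is exactly what guarantees the constraint $\tau_1\varepsilon_1=\tau_2\varepsilon_2=\tau_3\varepsilon_3$ can always be met). One then performs a single $\Rt$ move, which swaps the adjacent ends on $I_p$, and finally removes the self-arrow by $\SA$. This uses only $\Rt$ and $\SA$ --- no $\Rd$, no $\TC$ --- and so it immediately proves the non-welded statement as well. This is the content of the remark preceding the proposition (``even in the non welded case'') and is used again later in the paper.

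Your route instead invokes the $\Ct$ moves of Figure~\ref{fig:C3Moves}, each of which is built from $\Rd$, $\Rt$, and crucially $\TC$. So as written your argument only establishes the welded case. The parenthetical about an ``analogous --- less controlled --- commutation rule'' for the non-welded case is not a proof: once you drop the $\TC$ step from $\Ctu$ or $\Ctt$, you must re-verify that (i) the requisite $\Rt$ move is still available with the given signs and orientations, and (ii) the surviving extra arrows still have both ends on $I_q$. Neither is obvious, and in fact the cleanest way to check (i) is precisely to choose the auxiliary arrow freely --- which is the paper's argument. Even in the welded case, your claim that the extra arrows are self-arrows on $I_q$ is asserted rather than verified; it is true, but tracing it through $\Ctu$ and $\Ctt$ with the two outer strand-pieces identified to $I_q$ is more work than the single $\Rt$ of the paper's proof.

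In short: your idea of ``commute with extras, then kill the extras'' is sound in spirit, but the paper's direct $\SA$--$\Rt$--$\SA$ argument is both shorter and strictly stronger, since it avoids $\TC$ entirely.
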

\begin{proof}
  The strategy is to add a self arrow on the strand which does not support the adjacent ends, so that an R3 move involving the three arrows can be performed, 
  and finally to remove the self-arrow. See Figure \ref{fig:C2Move} for an example. 
  Nonetheless, the self-arrow has to be choosen so that the R3 move is valid. 
  This means that the global position of the three arrows should be as in the definition of an R3 move (Def.~\ref{def:Rmoves}), 
  and the condition $\tau_1\varepsilon_1=\tau_2\varepsilon_2=\tau_3\varepsilon_3$ should hold. 
  However, we have a complete freedom in choosing the self--arrow: orientation, sign, and relative position to the adjacent arrow ends. 
  The choice of orientation ensures that the arrows are in position of a R3 move, 
  while the sign and positions give independant control on $\tau_1\varepsilon_1$, $\tau_2\varepsilon_2$ and $\tau_3\varepsilon_3$. 
\end{proof}
\begin{figure}
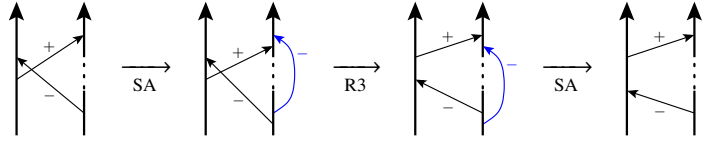

  \[
\dessin{2.5cm}{Ctl_1}\xrightarrow[\ \SA\ ]{}\dessin{2.5cm}{Ctl_2}\xrightarrow[\ \Rt\ ]{}\dessin{2.5cm}{Ctl_3}\xrightarrow[\ \SA\ ]{}\dessin{2.5cm}{Ctl_4}
\]
  \caption{Example of a $\textrm{C}^2$ move}
  \label{fig:C2Move}
\end{figure}

\subsection{Reduction of welded Gauss diagrams to horizontal Gauss diagrams}

 The main result of this section is the following
\begin{theo}\label{th:wGD=wGP}
 Every welded Gauss diagram is $\sa$--equivalent to a horizontal Gauss
 diagram. 
  Equivalently, the natural inclusion $\wGPHn\hookrightarrow\wGDHn$ is onto, that is 
  $\wGPHn\cong \wGDHn$.
\end{theo}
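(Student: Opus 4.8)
The plan is to \emph{comb} a welded Gauss diagram into horizontal form, one arrow at a time, using the commutation machinery of the previous subsections. Since $\wGPHn$ is by definition the subset of $\wGDHn$ consisting of elements admitting a horizontal representative, the assertion is equivalent to the statement that every welded Gauss diagram is $\sa$--equivalent to a horizontal one, and that is what I would prove.

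First I would dispose of self-arrows: as the $\SA$ move erases a self-arrow and the conclusion is stated up to $\sa$--equivalence, one may assume once and for all that the welded Gauss diagram $G$ one starts with has no self-arrow, so that each of its arrows joins two \emph{distinct} strands.

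The heart of the argument is a \emph{combing step}: if $a$ is an arrow of $G$ one of whose ends lies at the very top of some strand $I_i$, then $G$ is $\sa$--equivalent to $G'\bullet a'$, where $a'$ is a single horizontal arrow placed above everything and $G'$ is a welded Gauss diagram on the same strands. To prove this I would push the second end of $a$, on a strand $I_j$ with $j\neq i$, upward past the ends lying above it on $I_j$, one at a time. For a blocking arrow $b$: if the second end of $b$ also lies on $I_i$, then the two opposite ends of $a$ and $b$ are on the common strand $I_i$, the commutation is a $\Cd$ move, and by the commutation proposition above it is legal up to $\SA$ and introduces no new arrow; if instead the second end of $b$ lies on a third strand, the commutation is performed through the $\Ct$ moves of Figure \ref{fig:C3Moves} — grouping the blocking arrows according to the strand carrying their second end and applying Corollary \ref{cor:Bunches} — at the cost of auxiliary arrows, all inserted strictly below the end being pushed. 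Since the auxiliary arrows are always left below $a$, the second end of $a$ reaches the top in finitely many commutations, and the combing step follows.

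The main obstacle is then to turn the combing step into an induction, and this is where the precise form of the moves in Figure \ref{fig:C3Moves} and Corollary \ref{cor:Bunches} really enters. If the $\Ct$ moves cost no arrow, iterating the combing step would finish the proof by a plain induction on the number of arrows; but they do create arrows, and this number genuinely has to be allowed to grow (for instance a cyclic ``Borromean'' configuration of three arrows is not horizontal yet is not $\sa$--equivalent to any diagram with fewer arrows). I would therefore comb against a more refined, lexicographic complexity — my first attempt would be an outer induction on the number $n$ of strands together with an inner measure of how far the arrows incident to the last strand $I_n$ are from forming a single horizontal block at the top — but the real content is to engineer this complexity so that it strictly decreases at each combing step \emph{despite} the auxiliary arrows: this forces one to exploit the exact placement of those arrows guaranteed by Corollary \ref{cor:Bunches} and the explicit moves of Figure \ref{fig:C3Moves}, and is where essentially all the difficulty of the theorem is concentrated. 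Once the $I_n$--incident arrows form a horizontal block at the top, what remains below is a welded Gauss diagram on $I_1,\dots,I_{n-1}$ to which the outer inductive hypothesis applies, and a horizontal diagram on $I_1,\dots,I_{n-1}$ followed by a horizontal block involving $I_n$ is again horizontal, which closes the induction; Steps 1 and 2 above are routine given the material already developed, so the whole weight of the proof rests on the design and verification of this complexity.
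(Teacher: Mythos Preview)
Your outer induction on $n$ matches the paper's, but your inner step is essentially inverted, and this is where the gap lies. You propose to first push all $I_n$--incident arrows into a horizontal block at the top and only then apply the inductive hypothesis to what remains below; you correctly identify that the auxiliary arrows produced by $\Ct$ moves make termination of this pushing process the crux of the argument—but you do not actually supply the complexity measure, you only say it must be ``engineered'' and that ``the whole weight of the proof rests'' on it. As written, this is not a proof but a description of the obstacle.

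The paper sidesteps the termination problem entirely by reversing the order of operations. It first \emph{forgets} the chosen strand $I$ and all $I$--arrows, applies the inductive hypothesis to the resulting diagram $\hG$ on $n$ strands to obtain a finite sequence $S$ of moves making $\hG$ horizontal, and then \emph{lifts} $S$ to the full diagram $G$: each move of $S$ may be obstructed by intervening $I$--arrows, but Corollary~\ref{cor:Bunches} (with $I$ as the far--away strand) lets one clear them at the cost of further $I$--arrows only. Since $S$ is a fixed finite sequence, no termination issue arises here. After $S$ has been performed, the non--$I$--arrows form a horizontal diagram $\tB$ with a global order; one then pushes its arrows, in decreasing order, above all $I$--arrows via $\Ct$ moves (again with $I$ as the far--away strand, so the auxiliary arrows are $I$--arrows and stay below). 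This produces a decomposition $G_I\bullet\tB$ with $\tB$ horizontal and $G_I$ consisting \emph{solely} of $I$--arrows. Now the point you are missing: since every arrow of $G_I$ touches the common strand $I$, any two of them can be commuted by a $\Cd$ move, which creates \emph{no} new arrow—so $G_I$ is made horizontal in finitely many steps with no growth at all. The trick is thus not to design a subtle complexity, but to arrange that the only rearrangement left at the end involves arrows sharing a common strand, where the $\Ct$ obstruction simply vanishes.
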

\begin{proof}
  We prove the statement by induction on $n$.
 For $n=1$ the statement is trivial since $\wGDH_1$ is reduced to one element.

  Now, we assume that the result is true for $n\in\N$ and we consider
  $G\in\wGD_{n+1}$. We choose a strand $I$ of $G$ and we call
  \emph{$I$--arrow} the arrows connected to $I$. We denote
  by $\hG$ the Gauss diagram obtained from  $G$ by removing the strand $I$, 
  that is, by removing all  $I$--arrows and forgetting $I$. 
  The welded Gauss diagram $\hG$ is then a diagram on $n$ strands, and by the 
  induction hypothesis, there is a finite sequence $S$ of moves R1, R2,
  R3, $\TC$ and $\SA$ which transforms $\hG$ into a horizontal Gauss diagram $\tB$.
  Now, this sequence cannot be directly performed on $T$, since most moves
  require that some ends of arrows are adjacent 
  on strands, and $I$--arrows may interfere in that. 
  Nevertheless, Figure \ref{fig:Imoves} illustrates how to use Corollary \ref{cor:Bunches} 
  to push away such $I$--arrows at the cost of additional $I$--arrows, 
  and perform the desired moves. There, boxes
  $\dessin{1cm}{Box}$ stands for bunches of $I$--arrows.

  \begin{figure}[!h]
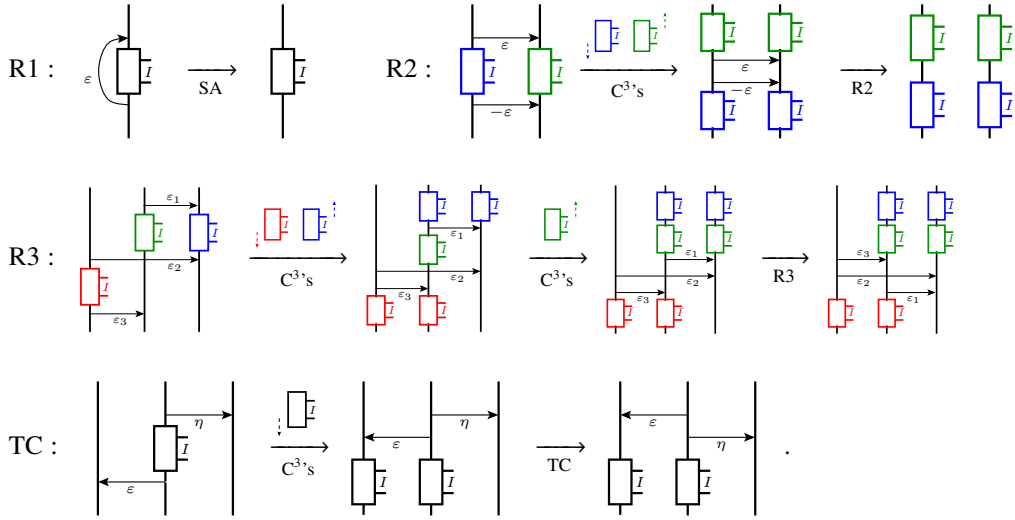

    \[
    \begin{array}{l}
      \Ru:\ \dessin{2.5cm}{FR1_2} \xrightarrow[\ \SA\ ]{}
      \dessin{2.5cm}{FR1_1}
      \hspace{.7cm}
      \Rd:\ \dessin{2.5cm}{FR2_1} \xrightarrow[\ \Ct\textrm{'s}\ ]{\dessin{.7cm}{MR2}}\dessin{2.5cm}{FR2_2}\xrightarrow[\ \Rd\ ]{}\dessin{2.5cm}{FR2_3}\\
      \Rt:\ \dessin{2.5cm}{FR3_1} \xrightarrow[\ \Ct\textrm{'s}\ ]{\dessin{.7cm}{MR3_1}}
      \dessin{2.5cm}{FR3_2}\xrightarrow[\ \Ct\textrm{'s}\ ]{\dessin{.7cm}{MR3_2}}
      \dessin{2.5cm}{FR3_3}\xrightarrow[\ \Rt\ ]{}
      \dessin{2.5cm}{FR3_4}
      \\
      \TC:\ \dessin{2.5cm}{FW_1} \xrightarrow[\ \Ct\textrm{'s}\ ]{\dessin{.7cm}{MW}}
      \dessin{2.5cm}{FW_2}\xrightarrow[\ \TC\ ]{}
      \dessin{2.5cm}{FW_3}.
    \end{array}
    \]
  \caption{Performing Reidemeister and TC moves obstructed by $I$-arrows}
\label{fig:Imoves} 
\end{figure}

It follows that the whole sequence $S$ can be performed on $T$, giving a representative of $T$ obtained by adding $I$--arrows to $\tB$.

Now, since $\tB$ is horizontal, 
there is a natural total order, from bottom to top, on its arrows.  The arrows of $\tB$ can then be 
pushed up above the $I$--arrows, successively in decreasing order,  using moves $\Ct$ (for $I$ being the strand on the right). 
This leads to the following decomposition:
\[
\dessin{3.5cm}{Decomp}
\]
\noi where $\tB$ is horizontal and $G_I$ contains only $I$--arrows. 
Now, for any strand $J\neq I$ restricted to $G_I$, 
all arrows connected to $J$ are also connected to $I$, so that one can use moves $\Cd$ 
(for $I$ being the strand on the right), to rearrange the ends on strand $J$ so that their order corresponds
to the order induced by $I$. Applying this operation for all strands $J\neq I$, 
we eventually obtain a horizontal Gauss diagram $\widetilde{G}_I$, 
such that $G$ is  $\sa$--equivalent to the product of
$\widetilde{G}_I$ with $\tB$, which is still horizontal.
This concludes the proof.  
\end{proof}

\begin{remarque}
As noted above, $\textrm{C}^2$ moves
still hold in $\vGDHn$, i.e. without using TC moves. 
This implies that any element of $\vGDH_2$ is
$\sa$--equivalent to a horizontal Gauss diagram.
This is not likely to hold for $\vGDHn$ with $n\geq 3$.
\end{remarque}

\begin{cor}
  The set $\wGDHn$ is a group for the stacking product.
\end{cor}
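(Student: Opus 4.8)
The plan is to deduce this from Theorem~\ref{th:wGD=wGP} together with the already recorded fact that $\wGPn$ is a group. Since the stacking product is compatible with all the quotients defining $\wGDn$, $\wGDHn$, $\wGPn$ and $\wGPHn$, the set $\wGDHn$ is a priori a monoid, with unit the trivial Gauss diagram $\mathbf{1}_n$ on $n$ strands carrying no arrow; so the only point to establish is the existence of inverses, and I would transfer them from $\wGPn$.

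Concretely, here is how I would organize the argument. Let $x\in\wGDHn$. By Theorem~\ref{th:wGD=wGP}, $x$ has a horizontal representative, i.e.\ there is an element $\widetilde{x}\in\wGPn$ whose image under the composite $\wGPn\hookrightarrow\wGDn\twoheadrightarrow\wGDHn$ is $x$; this composite is a morphism of monoids, being the restriction of a quotient map that respects stacking. As $\wGPn$ is a group, $\widetilde{x}$ admits a two-sided inverse $\widetilde{x}^{-1}\in\wGPn$; let $y\in\wGDHn$ be its image. Applying the monoid morphism to the identities $\widetilde{x}\bullet\widetilde{x}^{-1}=\widetilde{x}^{-1}\bullet\widetilde{x}=\mathbf{1}_n$ in $\wGPn$ yields $x\bullet y=y\bullet x=\mathbf{1}_n$ in $\wGDHn$. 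Hence every element of $\wGDHn$ is invertible, so $\wGDHn$ is a group. Equivalently, one may phrase it as: $\wGDHn\cong\wGPHn$ by Theorem~\ref{th:wGD=wGP}, and $\wGPHn$ is the image of the group $\wGPn$ under a morphism of monoids, hence a group.

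I do not expect a genuine obstacle here: the whole content is carried by Theorem~\ref{th:wGD=wGP}, and the remainder is the formal observation that the image of a group under a monoid homomorphism is again a group. The only mild points worth spelling out are that the horizontal representative supplied by Theorem~\ref{th:wGD=wGP} genuinely lives in $\wGPn$ (so that the previous proposition applies and an inverse is available there), and that the reduction-to-horizontal passage respects the stacking product — both immediate from the definitions. In particular, the inverse is computed inside $\wGPn$ and then pushed forward, rather than constructed directly at the level of $\wGDHn$.
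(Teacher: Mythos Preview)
Your proposal is correct and is precisely the intended argument: the paper states this as an immediate corollary of Theorem~\ref{th:wGD=wGP} (without proof), and the reasoning is exactly that $\wGDHn\cong\wGPHn$ is the image of the group $\wGPn$ under a monoid homomorphism, hence a group. Both phrasings you give are fine and match the paper's implicit logic.
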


Now we establish a second normalized form for welded Gauss diagrams up to $\sa$--equivalence.  
\begin{defi}\label{Defi:AscendingDiag}
  A Gauss diagram is said to be \emph{ascending}
  if all tails belong to the lowest halves of the strands, whereas all
  heads are on the highest halves, that is, if any tail is below any head.
\end{defi}

\begin{lemme}\label{lem:ascending}
  Every welded Gauss diagram is $\sa$-equivalent to an ascending Gauss diagram.
\end{lemme}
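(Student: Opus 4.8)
The plan is to bootstrap from Theorem \ref{th:wGD=wGP}: since every welded Gauss diagram is $\sa$--equivalent to a horizontal one, it suffices to prove that every horizontal Gauss diagram is $\sa$--equivalent to an ascending diagram. So I would fix a horizontal Gauss diagram $G$, order its arrows $a_1,\dots,a_m$ from bottom to top, and view $G = a_1\bullet\cdots\bullet a_m$ as a stacking product of elementary one--arrow diagrams.

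First I would record the elementary commutations available along a single strand. Two adjacent tails commute for free by a $\TC$ move; a head sitting just below an adjacent tail can be exchanged with it — turning the descending pattern ``head, then tail'' into the ascending pattern ``tail, then head'' — by a $\Ct$ move, or, when the two remaining ends of the arrows involved lie on a common strand, by a $\Cd$ move which is free up to $\SA$; in either case at the cost of a bounded number of auxiliary arrows deposited below. The key point is that we never need to exchange two heads, so the forbidden under--commutation $\UC$ — whose Gauss--diagram incarnation is head--commutation — never enters the picture, and all the moves used are legitimate. When a whole block of heads attached to one far--away strand sits below the tail to be lowered, Corollary \ref{cor:Bunches} performs the exchange at once, with all auxiliary arrows attached to that far--away strand and lying below the lowered end; this is the form I would use in practice.

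The heart of the argument is then to iterate these exchanges so as to bring every tail below every head on each strand, and to prove that this terminates despite the auxiliary arrows. I would set this up as a double induction: an outer induction on the number of arrows $m$ (peeling off, say, the bottom arrow $a_1$, so that by the inductive hypothesis $a_2\bullet\cdots\bullet a_m$ is $\sa$--equivalent to an ascending diagram $A$, whence $G$ is $\sa$--equivalent to $a_1\bullet A$; since $a_1$ is a single arrow its tail already lies in the lower part of its strand, and only its head must be raised past the block of tails of $A$ above it), together with an inner induction measuring how far that head still has to travel. Alternatively one can mirror the proof of Theorem \ref{th:wGD=wGP} and induct on the number of strands, peeling off one strand and rearranging its arrows last. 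The step I expect to be the main obstacle is precisely the bookkeeping of the auxiliary arrows created by the $\Ct$ moves and by Corollary \ref{cor:Bunches}: one must choose the order of the commutations, and the ``far--away'' strand at each stage, so that these new arrows always land in the already--sorted region and do not reintroduce an uncontrolled supply of descending patterns — exhibiting a complexity that strictly decreases at every step is where the real work lies.
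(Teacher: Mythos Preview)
Your approach is viable but significantly overcomplicated, and you have correctly identified---without resolving---the one genuine obstacle (termination of the rewriting process). The paper's proof sidesteps both issues with a single observation.

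First, the detour through Theorem~\ref{th:wGD=wGP} is unnecessary: the paper works directly with an arbitrary Gauss diagram, not a horizontal one. Second, rather than inducting on the number of arrows and peeling off the bottom one, the paper simply orders the strands and sorts them one at a time: on each strand in turn, push every tail below every head using repeated $\Ctu$ moves. The worry you raise---that the auxiliary arrows produced by each $\Ctu$ move might pile up uncontrollably and reintroduce descending patterns on already-sorted strands---is dissolved by the following observation, which is the entire content of the paper's argument: \emph{whenever a $\Ctu$ move creates a new arrow, its tail lies in a neighbourhood of a pre-existing tail and its head in a neighbourhood of a pre-existing head}. Hence a strand that has already been sorted (all tails below all heads) stays sorted after any $\Ctu$ move performed elsewhere, and on the strand currently being sorted the number of out-of-order head/tail pairs strictly decreases. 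Termination is then immediate.

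This same observation would rescue your induction as well: the auxiliary arrows you create when raising the head of $a_1$ through the tail region of $A$ all have their tails near existing tails and their heads near existing heads, so they land in ascending position automatically and no ``inner induction'' or delicate choice of far-away strand is needed. In short, you were missing not a clever ordering of the commutations but the structural fact about $\Ctu$ that makes any reasonable ordering work.
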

\begin{proof}
  Consider a Gauss diagram and choose an arbitrary order for its  strands. 
  Then, using move $\Ctu$ repeatedly, each strand can be successively sorted, 
  in the sense that all tails can be pushed below all heads.
  When performing such $\Ctu$ moves, the tails (resp. heads) of added arrows are in the neighborhood
  of a tail (resp. head) of a pre-existent arrow, so none of the
  performed $\Ctu$ moves will unsort the already sorted strands.
\end{proof}

\subsection{Classification of welded Gauss diagram up to $\sa$-equivalence.}

In this section, we prove the following theorem. 
 \begin{theo}\label{th:wGP=AutC}
 There is a group isomorphism $\wGDHn\cong\AutC(\RFn)$.
 \end{theo}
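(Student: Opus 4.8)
The goal is an isomorphism $\wGDHn\cong\AutC(\RFn)$. I would construct it by sending a welded Gauss diagram to the automorphism it induces on the reduced free group, and build the inverse from horizontal (in fact ascending) representatives. The main structural input is already in place: Theorem \ref{th:wGD=wGP} reduces every element of $\wGDHn$ to a horizontal Gauss diagram, and Lemma \ref{lem:ascending} gives ascending representatives; moreover $\wGPHn\cong\wGDHn$. So it suffices to understand the horizontal (braid-like) picture and match it with $\AutC(\RFn)$.

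\textbf{Step 1: Define the map $\Psi:\wGDHn\to\AutC(\RFn)$.} A Gauss diagram, read as the Gauss data of a welded string link diagram, carries a Wirtinger-type presentation of a group exactly as in Section \ref{sec:welded-diagrams}: each arrow contributes a conjugation relation $s^+=(s^-)^{(s^0)^\e}$. Passing to the reduced quotient and using Proposition \ref{prop:ReducedIso}/Proposition \ref{prop:DiagGaussDiag}-type identifications, a Gauss diagram $G$ on $n$ strands determines an endomorphism $\varphi_G$ of $\RFn=\langle x_1,\dots,x_n\rangle$ sending each $x_i$ to a conjugate of itself (the image of the $i^{\mathrm{th}}$ longitude). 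One checks $\varphi_{G\bullet G'}=\varphi_G\circ\varphi_{G'}$ and invariance under R1, R2, R3, TC (these are the moves built into $\wGDn$), and under SA: an SA move deletes a self-arrow, and in the reduced group a generator commutes with all its conjugates, so a self-arrow acts trivially after reduction. Hence $\Psi([G]):=\varphi_G$ is a well-defined monoid homomorphism into $\AutC(\RFn)$; since $\wGDHn$ is a group (Corollary after Theorem \ref{th:wGD=wGP}), $\Psi$ is a group homomorphism.

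\textbf{Step 2: Surjectivity.} Given $f\in\AutC(\RFn)$, with $f(x_i)=x_i^{w_i}$ for words $w_i$, write each $w_i$ as a product of conjugates of generators $x_j^{\pm1}$ (possible since $w_i$ lies in the subgroup generated by conjugates of the $x_j$, which in the reduced group is where longitudes live), and realize each elementary conjugation $x_i\mapsto x_i^{x_j^{\pm1}}$ by a single arrow from strand $j$ to strand $i$. Stacking these gives a Gauss diagram $G$ with $\varphi_G=f$. (One must check that the obvious ``generators'' of $\AutC(\RFn)$ suffice — this is the reduced-group analogue of McCool's generating set for basis-conjugating automorphisms, and the verification that the induced automorphism on $\RFn$ is exactly $f$ uses that in $\RFn$ the conjugating word is well-defined independently of the longitude choice, as already noted after Proposition stating $\varphi_T\in\AutC(\RFn)$.)

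\textbf{Step 3: Injectivity — the hard part.} Suppose $\varphi_G=\mathrm{Id}$; I must show $G$ is trivial in $\wGDHn$. By Theorem \ref{th:wGD=wGP} and Lemma \ref{lem:ascending} I may take $G$ horizontal and ascending, i.e. a welded pure braid on $n$ strands in normalized form. The plan is: (a) use the ascending/horizontal form to read off from $\varphi_G$ the conjugating words $w_i$ explicitly in terms of the arrows; (b) since $\varphi_G=\mathrm{Id}$ means each $w_i$ is trivial in $\RFn$, deduce combinatorially that $G$ can be reduced — the relation $[x_j;x_j^g]=1$ defining $\RFn$ must be matched by sequences of $\textrm{C}^3$/SA/R-moves on $G$ that cancel the arrows. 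Concretely I expect to argue by induction on the number of strands, peeling off one strand $I$ as in the proof of Theorem \ref{th:wGD=wGP}: the $I$-arrows record exactly the conjugating word $w_I$ and its ``dual'' contribution, and triviality of $\varphi_G$ forces these to be removable up to SA, after which the induction hypothesis finishes the remaining $n$ strands. The genuine obstacle is controlling the interaction between strands — showing that the word-level triviality in $\RFn$ (a statement about one generator at a time) propagates to a diagram-level reduction respecting all strands simultaneously; this is precisely where one needs the reduced-group identity and the $\textrm{C}^2$/$\textrm{C}^3$ commutation machinery of the previous subsections, and it is the part I would expect to occupy most of the proof. Once injectivity and surjectivity are established, $\Psi$ is the desired isomorphism.
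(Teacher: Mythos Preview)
Your Steps 1 and 2 are essentially the paper's construction of the forward map $\fDA$ and of preimages, and they are fine. The divergence is in Step 3, and this is where your plan has a real gap and also misses the paper's key simplification.

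You propose to prove injectivity directly: assume $\varphi_G=\Id$ and reduce $G$ to the trivial diagram by induction on the number of strands, using $\textrm{C}^2$/$\textrm{C}^3$ moves to realize the reduced-group relations combinatorially. You correctly flag this as the obstacle and leave it as a programme rather than an argument. The difficulty is genuine: you need to show that \emph{every} word-level equality in $\RFn$ (not just triviality of a single $w_i$) can be realized by diagram moves, and the ``peel off one strand'' induction does not obviously control this, since modifying the $I$-arrows to kill $w_I$ will in general alter the other $w_j$'s via the added $\textrm{C}^3$ arrows.

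The paper sidesteps this entirely. Instead of proving injectivity of $\fDA$, it promotes your surjectivity construction to a \emph{well-defined map} $\fAD:\AutC(\RFn)\to\wGDHn$, sending $f$ (with $f(x_i)=x_i^{g_i}$, $g_i\in\RF^{(i)}_{n-1}$) to the ascending diagram built from chosen word representatives of the $g_i$. The technical work is then Lemma~\ref{lem:mu}: showing that $\fAD$ does not depend on the choice of representatives, i.e.\ that the defining relations of $\RFn$ --- cancellation $x_j^{\pm1}x_j^{\mp1}=1$ and the commutation $x_jx_j^g=x_j^gx_j$ --- correspond to equalities of ascending Gauss diagrams in $\wGDHn$. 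This is exactly the combinatorial computation you anticipate, but isolated as a single local statement (equation~\eqref{eq:mu}) rather than buried inside a global induction. Once $\fAD$ is well-defined, $\fDA\circ\fAD=\Id$ holds by construction, and $\fAD\circ\fDA=\Id$ is \emph{immediate} on ascending representatives (both sides read off the same conjugating words), which suffices by Lemma~\ref{lem:ascending}. Injectivity then falls out for free.

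So the content you are missing is not a different idea but a reorganization: prove well-definedness of the inverse rather than injectivity of the forward map. This turns your open-ended Step~3 into a single concrete diagram identity.
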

Recall that $\AutC(\RFn)$ is the group of conjugating
automorphisms defined in Section \ref{sec:general-setting}.
The isomorphism between $\wGDHn$ and $\AutC(\RFn)$ is given explicitely.
In Section \ref{sec:fDA} we establish the existence of a group homomorphism 
 $\func{\fDA}{\wGDHn}{\AutC(\RFn)}$, 
and in Section \ref{sec:fAD} we construct an inverse $\fAD$ for $\fDA$.

\subsubsection{Definition of $\fDA$}\label{sec:fDA}

To construct the map $\fDA$, we need some notation.

\begin{defi}
  Let $G$
  be a Gauss diagram.
  A \emph{tail interval} is a pair
  $(h_1,h_2)$ such that, either
  \begin{itemize}
  \item $h_1$ and $h_2$ are heads on a same strand, with $h_1$ lower
    than $h_2$, and there is no other head between them;
  \item $h_1$ is a head and $h_2$ is the top endpoint of the strand containing $h_1$, 
     and there is no other head on this strand above $h_1$;
  \item $h_2$ is a head and $h_1$  is the bottom endpoint of the strand containing $h_2$, 
     and there is no other head on this strand below $h_2$;
  \item $h_1$ and $h_2$ are respectively the bottom and top endpoints of 
    a strand that doesn't contain any head. 
  \end{itemize}
  Graphically, tail intervals are portions of strand comprised between
  two successive heads and/or strand endpoints. A tail interval may contain some tails, but no head.\\
We denote by $T_G$ the set of all tail intervals of $G$.
\end{defi}

\begin{nota}\label{Nota:TailsIntervals}
  Let $G$ be a Gauss diagram.
  For every head $h$, we denote by $T^+_h$
  (resp. $T^-_h$) the unique tail interval of the form $(h,\ .\ )$
  (resp. $(\ .\ ,h)$) and by $T_h^0$ the unique tail interval that
  contains the tail which is connected to $h$ by an arrow, as illustrated below.
  \[
  \dessin{2cm}{hTails}
  \]
  Finally, for every $i\in\UnN$, we denote by
  $T^+_i$ and $T^-_i$ the unique tail intervals containing respectively the top and bottom endpoint of the $i^{\textrm{th}}$ strand
  (note that $T^+_i=T^-_i$ if the $i^{\textrm{th}}$ strand contains no arrow head).\\
\end{nota}

\begin{lemme}\label{lem:wtf}
  For any Gauss diagram $G$, there is a unique
  coloring map $\func{\xi_G}{T_G}{\RFn}$ such that
  \begin{enumerate}
  \item\label{cond:pp} for every $i\in\UnN$, $\xi_G(T^-_i)=x_i$;
  \item\label{cond:ps} for every head $h$,
    $\xi_G(T^+_h)=\xi_G(T^-_h)^{\xi_G(T^0_h)^{\e_h}}$, where $\e_h$ is
      the sign of the arrow that contains $h$.
  \end{enumerate}
Moreover, for every $i\in\UnN$, $\xi_G(T_i^+)$
only depends on the $\sa$--equivalence class of $G$.
\end{lemme}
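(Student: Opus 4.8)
The plan is to prove the existence and uniqueness of $\xi_G$ by an induction along the lower central series of $\RFn$, and then to deduce the $\sa$-invariance of the elements $\xi_G(T^+_i)$ by checking that they are preserved by each of the generating moves $\Ru,\Rd,\Rt,\TC,\SA$ of $\sa$-equivalence. The point is that conditions $(\ref{cond:pp})$--$(\ref{cond:ps})$ form, in general, a system of equations with circular dependencies --- the value on a tail interval may involve, via $(\ref{cond:ps})$, the value on a tail interval higher up, possibly on the same strand --- so one cannot simply ``read up the strands''; it is the nilpotency of $\RFn$ that makes the system uniquely solvable.

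Recall that $\RFn$ is nilpotent: by Habegger--Lin's Lemma~1.3 in \cite{HL}, already used in the proof of Proposition~\ref{prop:ReducedIso}, one has $\RFn\cong R\left(\fract{\Fn}/{\Gamma_n\Fn}\right)$, a quotient of the nilpotent group $\fract{\Fn}/{\Gamma_n\Fn}$, so $\Gamma_N\RFn=\{1\}$ for $N$ large. I would build approximations $\xi^{(m)}\colon T_G\to\RFn$ as follows: set $\xi^{(0)}(\iota):=x_{i(\iota)}$, where $i(\iota)\in\UnN$ indexes the strand carrying $\iota$; and, given $\xi^{(m)}$, define $\xi^{(m+1)}$ by $\xi^{(m+1)}(T^-_i):=x_i$ and, reading each strand from bottom to top, $\xi^{(m+1)}(T^+_h):=\xi^{(m+1)}(T^-_h)^{\xi^{(m)}(T^0_h)^{\e_h}}$ at each head $h$. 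This is well posed, since $T^-_h$ lies below $h$ hence is already colored, while the exponent invokes only the previous approximation. A double induction --- on $m$, and, within each step, reading up the strands --- shows $\xi^{(m+1)}\equiv\xi^{(m)}\pmod{\Gamma_{m+1}\RFn}$; the only ingredient is that conjugation respects the lower central filtration, i.e. $a_1\equiv a_2\pmod{\Gamma_{m+1}}$ and $b_1\equiv b_2\pmod{\Gamma_m}$ imply $a_1^{\,b_1}\equiv a_2^{\,b_2}\pmod{\Gamma_{m+1}}$. By nilpotency the sequence $(\xi^{(m)})$ is eventually constant, and its limit $\xi_G$ satisfies $(\ref{cond:pp})$ and $(\ref{cond:ps})$. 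For uniqueness, any solution $\xi$ has each $\xi(\iota)$ a conjugate of $x_{i(\iota)}$ --- read up the strand from $(\ref{cond:pp})$ using $(\ref{cond:ps})$ --- hence $\xi\equiv\xi^{(0)}\pmod{\Gamma_2\RFn}$, and the same double induction gives $\xi\equiv\xi^{(m)}\pmod{\Gamma_{m+1}\RFn}$ for all $m$, so $\xi=\xi_G$.

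For the $\sa$-invariance, I would use that two diagrams of $\wGDn$ with the same class in $\wGDHn$ are related by a sequence of moves $\Ru,\Rd,\Rt,\TC,\SA$. If $G'$ is obtained from $G$ by a single such move, supported --- after an isotopy, away from the strand tops --- in a ball meeting the strands in finitely many arcs, then the tail intervals lying outside this ball correspond canonically between $G$ and $G'$. The claim is that assigning to those outside intervals of $G'$ their $\xi_G$-colors extends to a coloring of $G'$ satisfying $(\ref{cond:pp})$--$(\ref{cond:ps})$; by uniqueness this extension is $\xi_{G'}$, and since each $T^+_i$ lies outside the ball, $\xi_{G'}(T^+_i)=\xi_G(T^+_i)$. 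For $\TC$ this is automatic: commuting two adjacent tails changes neither the set of tail intervals nor the incidences occurring in $(\ref{cond:ps})$. For $\Ru$ and $\SA$ one deletes a self-arrow with head $h$ whose tail lies on the same strand $I_i$; then $\xi_G(T^-_h)$ and $\xi_G(T^0_h)$ are both conjugates of $x_i$, hence commute in the \emph{reduced} group, so $(\ref{cond:ps})$ forces $\xi_G(T^+_h)=\xi_G(T^-_h)$ and the intervals $T^-_h,T^+_h$ merge coherently when the arrow is erased --- here the defining relation of $\RFn$ is essential. For $\Rd$ the two crossings introduce conjugations by $w^{\e_1}$ and $w^{\e_2}$ with a common word $w$ and opposite signs $\e_1=-\e_2$, which cancel; for $\Rt$, that the three outgoing arcs receive equal colors on both sides is the familiar computation showing Reidemeister-invariance of Wirtinger-type presentations, the move's side condition $\tau_1\e_1=\tau_2\e_2=\tau_3\e_3$ on signs and orientations being precisely what makes the two compositions of conjugations coincide.

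I expect the main obstacle to be the $\Rt$ case of the invariance step: unlike the others it requires genuinely comparing two nontrivial compositions of conjugations among several tail intervals, with careful orientation and sign bookkeeping encoded by the move's side condition. The existence and uniqueness argument, though it needs the lower-central-series induction, is routine once nilpotency of $\RFn$ is available, and the self-arrow cases $\Ru,\SA$ --- while relying on the defining relation of the reduced group --- are short.
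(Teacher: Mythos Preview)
Your argument is correct, and the invariance half --- checking that each of $\Ru,\Rd,\Rt,\TC,\SA$ preserves the coloring outside the local ball, with the reduced-group relation doing the work for $\Ru$ and $\SA$ --- is exactly what the paper does (its Figure~\ref{fig:MapsCorr}). Where you genuinely diverge is in the existence/uniqueness part. The paper does \emph{not} iterate along the lower central series; instead it first treats the horizontal case directly (where one can simply read up the strands, since every $T^0_h$ then lies strictly below $h$), observes that each move induces a bijection between the sets of colorings on either side, and then invokes Theorem~\ref{th:wGD=wGP} to conclude that every Gauss diagram is connected by moves to a horizontal one and hence admits a unique coloring.

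Your nilpotency-iteration approach has the advantage of being self-contained: it establishes existence and uniqueness for an arbitrary Gauss diagram without appealing to Theorem~\ref{th:wGD=wGP}, so the lemma becomes logically independent of the ``reduction to horizontal'' machinery. The paper's route, by contrast, avoids the bookkeeping with the $\Gamma_k$ filtration and the congruence $a_1^{\,b_1}\equiv a_2^{\,b_2}\pmod{\Gamma_{m+1}}$, but pays for this by importing the nontrivial combinatorial result of Theorem~\ref{th:wGD=wGP}. Either way one uses nilpotency of $\RFn$ somewhere in the overall story; you have simply front-loaded it here rather than hiding it in the horizontalization step.
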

See remark \ref{rk:asc} for an example. 

Note that condition (\ref{cond:ps}), together with condition (\ref{cond:pp}), imply that, for each $i\in\UnN$, every tail interval which  belongs to $I_i$ is necessarily mapped to a conjugate of  $x_i$.

\begin{proof}
  First, we deal with the case of horizontal Gauss diagrams. 
  If $G$ contains a single $\varepsilon$--labelled non self-arrow, then the lemma is
  clear.
Indeed, the only possible coloring sends $T_1^-$ to $x_1$, $T_1^+$ to $x_1^{x_2^\varepsilon}$
and all other $T_k^+=T_k^-$ to $x_k$, as pictured below. 
  \[
\begin{array}{c}
\dessin{3cm}{Corr_Braid}\\
\end{array}
\] 
By induction on the number of arrows the result then follows whenever $G$ is horizontal. 
  
  Now, each Reidemeister, $\TC$ or $\SA$ move between two Gauss diagrams $G_1$
  and $G_2$ induces a one-to-one correspondence between
  the coloring maps $\xi_{G_1}$ and $\xi_{G_2}$.
  These correspondences are shown in Figure \ref{fig:MapsCorr}.
  \begin{figure}[!h]
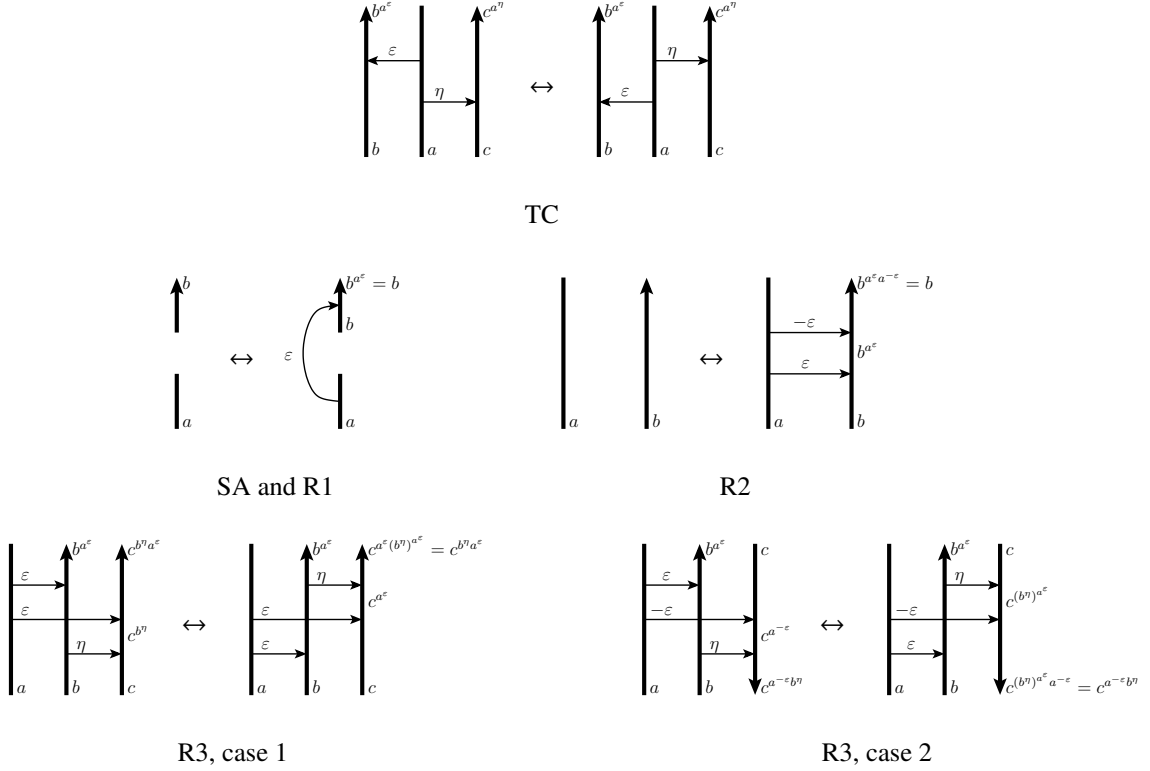

\[
\begin{array}{c}
  \dessin{3cm}{Corr_TC_1} \ \leftrightarrow\ \dessin{3cm}{Corr_TC_2}\\
  \textrm{TC}
\end{array}
\]
\[
\begin{array}{ccc}
 \dessin{3cm}{Corr_Hp_1} \ \leftrightarrow\ \dessin{3cm}{Corr_Hp_2}
& \hspace{1cm} &
  \dessin{3cm}{Corr_R2_1} \ \leftrightarrow\ \dessin{3cm}{Corr_R2_2} \\
\textrm{$\SA$ and R1}&&\textrm{R2}
\end{array}
\]
\[
\begin{array}{ccc}
 \dessin{3cm}{Corr_R3_2} \ \leftrightarrow\ \dessin{3cm}{Corr_R3_1}
&\hspace{1cm} &
  \dessin{3cm}{Corr_R3b_1} \ \leftrightarrow\ \dessin{3cm}{Corr_R3b_2}\\
  \textrm{R3, case 1} && \textrm{R3, case 2}
\end{array}
\]
    \caption{One-to-one correspondence between the coloring maps $\xi$}
    \label{fig:MapsCorr}
  \end{figure}
 As usual, for every picture, there is a non represented part which is identical on both
 side of each move. Note that this is consistent since the ends of the
 represented tail intervals are pairwise coherently labelled.
 In particular, in the case of moves $\SA$ and R1, the equality $b^a=b$ holds in
 $\RFn$ since $a$ and $b$ are both conjugate of a same generator $x_i$
 for some $i\in\UnN$. 
  By Theorem \ref{th:wGD=wGP}, this implies that any Gauss diagram has a unique coloring $\xi$ with the desired property.
Moreover, Figure \ref{fig:MapsCorr} also shows that tail intervals which are not of the form $T_i^+$ or
$T_i^-$ are possibly modified by Reidemeister, $\TC$ and $\SA$ moves. As a matter of fact,
$\xi(T_i^+)$ is kept unchanged for every $i\in\UnN$, and depends on the $\sa$--equivalency class of $G$ only.
\end{proof}

\begin{remarque}\label{rk:GenWTF}
  The above proof of Lemma \ref{lem:wtf} can be easily adapted to show that the number of $\Fn$--coloring map $\xi_G:T_G\to \Fn$ for a
  given Gauss diagram $G$ is a well defined invariant on $\GDn$.
Moreover, when such an $\Fn$--coloring map is unique, for instance when
$G\in\GPn$, then the value $\big(\xi_G(T_i)\big)_{i\in\UnN}\in\F_n^n$
is also a well defined invariant.
\end{remarque}

\begin{defi}
  For every $G\in\wGDHn$, we define $\fDA(G)\in\AutC(\RFn)$ as
  the automorphism of $\RFn$ which sends, for every $i\in\UnN$, the
  generator $x_i$ to $\xi_G(T_i^+)$.
\end{defi}

It is easily checked that $\fDA:\wGDHn\to\AutC(\RFn)$ is a group
homomorphism, so that it defines an action of $\wGDHn$ on $\RFn$.

\begin{remarque}\label{rk:asc}
  If $G$ is ascending, as defined in Definition \ref{Defi:AscendingDiag},
  then, for any $i\in\UnN$, the element $g_i\in\RFn$ by which
  $\fDA(G)$ conjugates $x_i$ can be read directly on $G$ as
  $x_{i_1}^{\e_1}\cdots x_{i_s}^{\e_s}$ where
  $T_i^-$ contains exactly the tails $t_1,\cdots,t_s$ in this
  order and for each $k\in\llbracket1,s\rrbracket$, $t_k$ is
  connected by a $\e_k$--signed arrow to a head on $I_{i_k}$.
  For example, 
\[
\xymatrix{\dessin{2.5cm}{Gasc} \ar@{|->}[r]^(.47){\fDA}& \left\{
{\begin{array}{ccl}
  x_1 & \mapsto & x_1^{x_2x_3^{-1}}\\
     x_2 & \mapsto & x_2^{x_1}\\
    x_3 & \mapsto & x_3^{x_2^{-1}}
  \end{array}}
\right..
}
\]
\end{remarque}

\begin{remarque}\label{rem:star2}
According to Remark \ref{rk:GenWTF}, $\fDA$ can be refined into an
action of $\GPn$ on $\Fn$.
\end{remarque} 

 \subsubsection{Definition of $\fAD$}\label{sec:fAD}

We will now define an inverse for $\fDA$. 
Let us first define, for every $i\in\UnN$, the map 
\[
 \func{\rho_i}{\AutC(\RFn)}{\RF^{(i)}_{n-1}}=R\langle x_1,\cdots,\hat{x}_i,\cdots,x_n\rangle
\]
as the unique function such that, for every $\varphi\in\AutC(\RFn)$, we have 
$\varphi(x_i)=x_i^{\rho_k(\varphi)}$.
\begin{lemme}
  For all $i\in\UnN$, the map $\rho_i$ is well defined.
\end{lemme}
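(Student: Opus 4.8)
The plan is to handle existence and uniqueness separately, the second being the real content. Write $p_i\colon\RFn\twoheadrightarrow\RF^{(i)}_{n-1}$ for the homomorphism killing $x_i$ and fixing the other generators, and regard $\RF^{(i)}_{n-1}$ as a subgroup of $\RFn$ via the inclusion $x_j\mapsto x_j$ ($j\ne i$). Both maps are well defined on reduced groups since each defining relation $[x_k;x_k^{g}]$ is carried to one of the same shape; moreover $p_i$ restricts to the identity on $\RF^{(i)}_{n-1}$, so $\RFn=\Ker p_i\rtimes\RF^{(i)}_{n-1}$ and, comparing presentations, $\Ker p_i=\langle\langle x_i\rangle\rangle$, the normal closure of $x_i$. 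I will use throughout that, by the very definition of $\RFn$, conjugating the relation $[x_i;x_i^{w}]=1$ shows that any two conjugates of $x_i$ commute; hence $\langle\langle x_i\rangle\rangle$ is abelian and each of its elements commutes with $x_i$.

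For existence, given $\varphi\in\AutC(\RFn)$ there is, by definition of $\AutC$, an element $g\in\RFn$ with $\varphi(x_i)=x_i^{g}$; set $\rho_i(\varphi):=p_i(g)\in\RF^{(i)}_{n-1}$. Since $g\,p_i(g)^{-1}\in\Ker p_i$ commutes with $x_i$ and $g=\big(g\,p_i(g)^{-1}\big)\,p_i(g)$, one gets $x_i^{g}=\big(x_i^{\,g\,p_i(g)^{-1}}\big)^{p_i(g)}=x_i^{\,p_i(g)}$, so $\varphi(x_i)=x_i^{\rho_i(\varphi)}$ with $\rho_i(\varphi)$ in $\RF^{(i)}_{n-1}$, as wanted.

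For uniqueness it suffices to prove that $c\in\RF^{(i)}_{n-1}$ with $[x_i;c]=1$ in $\RFn$ forces $c=1$: indeed, if $x_i^{h}=x_i^{h'}$ with $h,h'\in\RF^{(i)}_{n-1}$, then $c:=hh'^{-1}$ commutes with $x_i$, and $c=1$ gives $h=h'$. To prove the displayed implication I would invoke Milnor's reduced Magnus expansion, that is, the \emph{injective} homomorphism $E\colon\RFn\to\mathcal{R}^{\times}$ with $E(x_j)=1+X_j$, where $\mathcal{R}$ is $\Z\langle\langle X_1,\dots,X_n\rangle\rangle$ modulo the two-sided ideal generated by all monomials containing a repeated letter. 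Since $c$ is a word in the $x_j$ with $j\ne i$, its expansion is $E(c)=1+P$ with $P$ a $\Z$-combination of (distinct-letter) monomials none of which involves $X_i$. The relation $[x_i;c]=1$ yields $(1+X_i)(1+P)=(1+P)(1+X_i)$, that is, $X_iP=PX_i$. But every monomial occurring in $X_iP$ begins with $X_i$, whereas no monomial occurring in $PX_i$ does (the monomials of $P$ contain no $X_i$), so the two sides are supported on disjoint sets of monomials and therefore both vanish; hence $P=0$, $E(c)=1$, and $c=1$ by faithfulness of $E$. This gives $h=h'$ in $\RFn$, hence in $\RF^{(i)}_{n-1}$.

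The only genuine obstacle is the uniqueness step, which amounts to saying that the centralizer of $x_i$ in $\RFn$ meets the subgroup $\RF^{(i)}_{n-1}$ only in the identity (equivalently, that this centralizer is exactly $\langle\langle x_i\rangle\rangle$); the clean way through is the reduced Magnus expansion, which is the standard structural tool for $\RFn$ in this circle of ideas (Milnor; Habegger--Lin \cite{HL}). Everything else --- the elementary properties of $p_i$ and of the inclusion $\RF^{(i)}_{n-1}\hookrightarrow\RFn$, the commutation of the conjugates of $x_i$, and the explicit choice $\rho_i(\varphi)=p_i(g)$ --- is routine manipulation with the defining relations.
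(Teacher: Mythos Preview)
Your proof is correct and follows essentially the same approach as the paper: existence by reducing the conjugator modulo $x_i$, and uniqueness via the Magnus expansion argument that $X_iP=PX_i$ forces $P=0$ when $P$ avoids $X_i$. Your existence step is packaged more structurally (via the retraction $p_i$ and the semidirect product $\RFn=\langle\langle x_i\rangle\rangle\rtimes\RF^{(i)}_{n-1}$) where the paper does the equivalent one-line computation $x_i^{g_1x_i^{\pm1}g_2}=x_i^{g_1g_2}$, and you are in fact a bit more careful in invoking the \emph{reduced} Magnus expansion on $\RFn$, whereas the paper references the free-group expansion from Section~\ref{sec:ribbon-milnor-invariants}.
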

\begin{proof}
  By definition of $\AutC(\RFn)$, there exists some $g\in\RFn$ such
  that $\varphi(x_i)=x_i^g$. 
  Since for every $g_1,g_2\in\RFn$ and every
  $\e\in\{\pm1\}$, we have the equality 
 \[
x_i^{g_1x_i^\e
  g_2}=g_2^{-1}x^{-\e}_ig_1^{-1}x_ig_1x_i^\e
  g_2=g_2^{-1}g_1^{-1}x_ig_1x^{-\e}_ix_i^\e
  g_2=g_2^{-1}g_1^{-1}x_ig_1g_2=x_i^{g_1g_2}
\] 
\noi  in $\RFn$, we may assume that $g$ is represented by a word which does not contain $x_i$,
  \ie that $g\in\RF^{(i)}_{n-1}$.
  Now, it remains to prove that this $g$ is uniquely defined in $RF^{(i)}_{n-1}$. 
  Suppose that there exists $g_1,g_2\in\RF^{(i)}_{n-1}$ such that
  $x_i^{g_1}=x_i^{g_2}$. Then $x_i$ commutes with
  $g_1g_2^{-1}$, that is, $[x_i,g_1g_2^{-1}]=1$. 
  Applying the Magnus expansion 
  $E$, defined in Section \ref{sec:ribbon-milnor-invariants}, to the latter equality gives that 
  $X_i.G - G.X_i=0$, 
  where $E(x_i)=1+X_i$ and $E(g_1g_2^{-1})=1+G$.  
  But since $g_1g_2^{-1}\in \RF^{(i)}_{n-1}$, the power series $G$ does not contain the variable $X_i$, 
  which implies that $X_i.G = G.X_i=0$. 
  So $G=0$, which by injectivity of the Magnus expansion (see \cite{MKS}) implies $g_1=g_2$.
\end{proof}

Now, we define a map
\[
\func{\eta}{\disp{\prod_{i=1}^n\RF^{(i)}_{n-1}}}{\wGDHn}
\]
as follows.  
Let  $(g_1,\cdots,g_n)\in \prod_{i=1}^n\RF^{(i)}_{n-1}$.  
For each $i\in\UnN$, choose a word 
$g_i=x_{j^i_1}^{\e_1^i}\cdots x_{j^i_{s_i}}^{\e_{s_i}^i}$ representing
$g_i$.
Then we define $\eta(g_1,\cdots,g_n)$ as the image in $\wGDHn$ of the
Gauss diagram obtained from the trivial one by adding successively,
according to the lexicographical order on
$\big\{(i,k)\big|i\in\UnN,k\in\llbracket1,s_i\rrbracket\big\}$,
an $\e_k^i$--signed arrow with head at the very top of $I_i$ and tail at the very bottom of $I_{j_k^i}$.
Note that this actually defines an ascending Gauss diagram. 
See Figure \ref{fig:phiAG} for an exemple.

\begin{lemme}\label{lem:mu}
  The map $\eta$ is well defined.
\end{lemme}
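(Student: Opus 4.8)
The plan is to check that $\eta$ does not depend on the two choices made in its definition: the choice of a word representing each $g_i\in\RF^{(i)}_{n-1}$, and (implicitly) the particular ascending Gauss diagram built from that word. Since the target is $\wGDHn$, a quotient of Gauss diagrams by $\Ru,\Rd,\Rt,\TC,\SA$, it suffices to show that any two words representing the same element of $\prod_i \RF^{(i)}_{n-1}$ give rise to $\sa$--equivalent Gauss diagrams.

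First I would reduce to a single relator. Two words represent the same element of $\RF^{(i)}_{n-1}$ if and only if one is obtained from the other by a finite sequence of the defining relations of a reduced free group on the generators $\{x_j : j\ne i\}$, namely: (a) free reductions/insertions $x_j^{\e} x_j^{-\e}\leftrightarrow \emptyset$, and (b) the reduced-group relations $[x_j; x_j^{w}] = 1$, i.e.\ commuting a generator with a conjugate of itself. So I would examine the effect of each such elementary move on the associated ascending Gauss diagram, component by component (the construction of $\eta$ treats each strand $I_i$ independently, so I may fix $i$ and only track the arrows with head on $I_i$).

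\emph{Move (a)}: inserting $x_j^{\e}x_j^{-\e}$ adds two arrows with heads at the top of $I_i$ and tails at the bottom of $I_j$, with opposite signs and adjacent on both $I_i$ and $I_j$. On $I_i$ the two heads are adjacent, on $I_j$ the two tails are adjacent; after a $\Rd$ move (using that the two ends are genuinely adjacent, which is where I would invoke Corollary \ref{cor:Bunches}/the $\Cd$ commutation results to make them adjacent on $I_j$ as well) this is an $\Rd$ cancellation. \emph{Move (b)}: replacing a subword $x_j x_j^{w}$ by $x_j^{w} x_j$ (for $w$ a word not involving $x_j$) swaps the order of two arrows with heads adjacent on $I_i$; I would realize this as a $\Cd$ move (commutation up to $\SA$ when the other two ends lie on the same strand $I_j$), using the Proposition following Corollary \ref{cor:Bunches}. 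When $w$ involves several generators one inserts the intermediate arrows for $w$ and commutes past them using the $\Ct$ moves of Figure \ref{fig:C3Moves}, exactly as in the proof of Theorem \ref{th:wGD=wGP}; the extra arrows created are themselves connected to far-away strands and get absorbed, up to $\SA$, because they arise in cancelling pairs. Finally, independence of the order in which the arrows for the different $g_i$'s are laid down follows from $\TC$ (tails of arrows with distinct heads can be commuted) together with $\Cd$ on the head side.

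The main obstacle I expect is bookkeeping: making precise that every $\Cd$/$\Ct$ commutation used above is \emph{legal} (the strand and sign conditions in Definition \ref{def:Rmoves} and in the $\Cd$-move proposition), and that the auxiliary arrows generated along the way always cancel in pairs or reduce to self-arrows, so that no residual arrows survive in $\wGDHn$. This is precisely the same mechanism already exploited in the proof of Theorem \ref{th:wGD=wGP} and in Lemma \ref{lem:ascending}, so I would phrase the argument by reduction to those, keeping the case analysis (free cancellation vs.\ reduced-group relation vs.\ reordering of the blocks $g_i$) explicit but the diagrammatic manipulations by reference to the already-established $\Cd$, $\Ct$ and commutation lemmas.
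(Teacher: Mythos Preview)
Your overall architecture matches the paper's: reduce to a single coordinate $g_i$, and then to the two relator types (free cancellation and the reduced--group commutator). Move (a) is handled correctly; the paper disposes of it in one line via $\Rd$ after using $\TC$ to make the two tails adjacent on $I_j$, so your invocation of Corollary~\ref{cor:Bunches} is unnecessary but harmless.

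The gap is in move (b). First, your parenthetical ``for $w$ a word not involving $x_j$'' is a genuine reduction that you must justify: in $\RF^{(i)}_{n-1}$ the defining relators are $[x_j;x_j^{g}]$ for \emph{all} $g$, and one reduces to $g\in\RF^{(i,j)}_{n-2}$ via the group identity $x_j^{g_1 x_j^{\pm 1} g_2}=x_j^{g_1 g_2}$; the paper makes this explicit. Second, and more seriously, your description ``swaps the order of two arrows with heads adjacent on $I_i$; realize this as a $\Cd$ move'' does not work as stated. The two $x_j$--arrows have their \emph{tails} adjacent on $I_j$ (after $\TC$), but their \emph{heads} on $I_i$ are separated by the whole block of $w^{-1}$--arrows, so the $\Cd$ proposition does not apply directly. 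You then propose to bridge this using $\Ct$ moves and assert that the extra arrows so created ``arise in cancelling pairs''; this is precisely the content of the lemma and cannot be taken for granted. Each $\Ctu$ move creates a new arrow between $I_j$ and some $I_k$, and there is no a priori reason for the arrows produced when pushing one $x_j$--arrow \emph{down} through $w^{-1}$ to cancel against those produced when pushing the other $x_j$--arrow \emph{up} through $w$.

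The paper's proof avoids this cancellation bookkeeping entirely. Rather than transforming the left side directly into the right side, it transforms \emph{both} sides to a common third diagram. The key manoeuvre is to pull the outer $x_j$--arrow through the adjacent $w^{\pm 1}$--block using $\Ctu$ moves configured so that the resulting extra arrows have their heads transferred from $I_i$ to $I_j$ (this is the $\widetilde B_k^{\e}$ notation in the paper), after which a single $\Cd$ move on the now-adjacent $x_j$--heads yields a symmetric middle form reachable from either side. This is the missing idea in your sketch: you need either this ``meet in the middle'' trick, or an explicit verification (which is not short) that your direct route produces cancelling debris.
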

\begin{proof}
  The fact that $\eta$ does not depend on the order of the strands is guaranteed by relation TC, 
  so we only have to show that $\eta(g_1,\cdots,g_n)$ does not depend on the words representing the $n$ variables. 
  Fix $i\in \UnN$. Any two words representing $g_i$ differ by a finite number of the following moves
  \begin{enumerate}
  \item\label{cond:movei} $(1\leftrightarrow x_j^{-\e}x_j^{\e})$, for some $j\in\UnN$ and some $\e\in\{\pm1\}$,
  \item\label{cond:moveii} $(x_j x_j^g\leftrightarrow x_j^gx_j)$, for some $j\in\UnN$ 
  and some word $g:=x^{\e_1}_{i_1}\cdots x_{i_s}^{\e_s}\in\RF^{(i)}_{n-1}$,
  \end{enumerate}
  so we only need to check invariance of $\eta$ under these two types of moves.
  Invariance under move (\ref{cond:movei}) follows easily from move R2 on Gauss diagrams. 
  Now, let us consider move (\ref{cond:moveii}).  Using the identity
  $x_j^{g_1x_j^{\pm 1}g_2}
  =x_j^{g_1g_2}$,
  we can assume that the word $g$ involved in the move is in 
  $\RF^{(i,j)}_{n-2}:=R\langle x_1,\cdots,\hat{x_i},\hat{x_j},\cdots,x_n\rangle$.
  We need to prove that the following equality holds in $\wGDHn$ 
  \begin{equation}\label{eq:mu}
  \dessinH{3cm}{GDComm_1}\ =\ \dessinH{3cm}{GDComm_2},
  \end{equation}
  \noi where $B_k^\e$  stands for the Gauss diagram with only one 
  $\e$--labeled arrow 
  whose tail is on $I_k$ and head on $I_i$.
  Note that, in particular, all arrows in the boxes have their head on
  strand $I_i$.
  The proof is done by proving each side to be equal to a third Gauss diagram. 
  This is shown in two main steps, and the reader is encouraged to consult the example given in Figure \ref{fig:proofGD} while reading the following proof.  

  First, we pull the lowest ($+$--signed) arrow in the left-hand side Gauss diagram in (\ref{eq:mu}) across the lowest block 
  (i.e. across $B^{-\varepsilon_s}_{i_s}\bullet \dots  \bullet
  B^{-\varepsilon_1}_{i_1}$) by a sequence of moves TC and $\Ctu$ (with,
  in Figure \ref{fig:C3Moves}, $I_j$ being the rightmost strand, $I_i$ the
  middle one, and with all strands oriented downwards), and then use
  move $\Cdd$ to commute the heads of the two pictured $+$--signed arrows.
  This produces the following equality in $\wGDHn$ 
  \[
  \dessinH{3cm}{GDComm_1b}\ =\ \dessinH{2.4cm}{GDComm_3}\ =\ \dessinH{2.4cm}{GDComm_4},
  \]
  \noi where $\widetilde{B}_k^\e$ is obtained from $B_k^\e$ 
  by moving the head of each arrow lying on $I_i$ to the strand $I_j$ (preserving the
  horizontality and height of the arrow).

  Next, we consider the right-hand side Gauss diagram in (\ref{eq:mu}). Using TC moves, we pull the
  tail of the highest ($+$--signed) arrow just below the tail of the
  middle ($+$--signed) arrow.
Then, we pull its head across the highest block, using moves $\Ctu$ (with, in Figure \ref{fig:C3Moves}, $I_j$ the rightmost strand, $I_i$ the middle one, but with all strands oriented upwards).
This leads to
  \[
  \dessinH{3cm}{GDComm_2b}\ =\ \dessinH{3.1764cm}{GDComm_5}\ =\  \dessinH{2.4cm}{GDComm_4},
  \]
  \noi and proves that (\ref{eq:mu}) holds in $\wGDHn$.  
\end{proof}

The example in Figure \ref{fig:proofGD} illustrates both steps of the above proof.

\begin{figure}[!h]
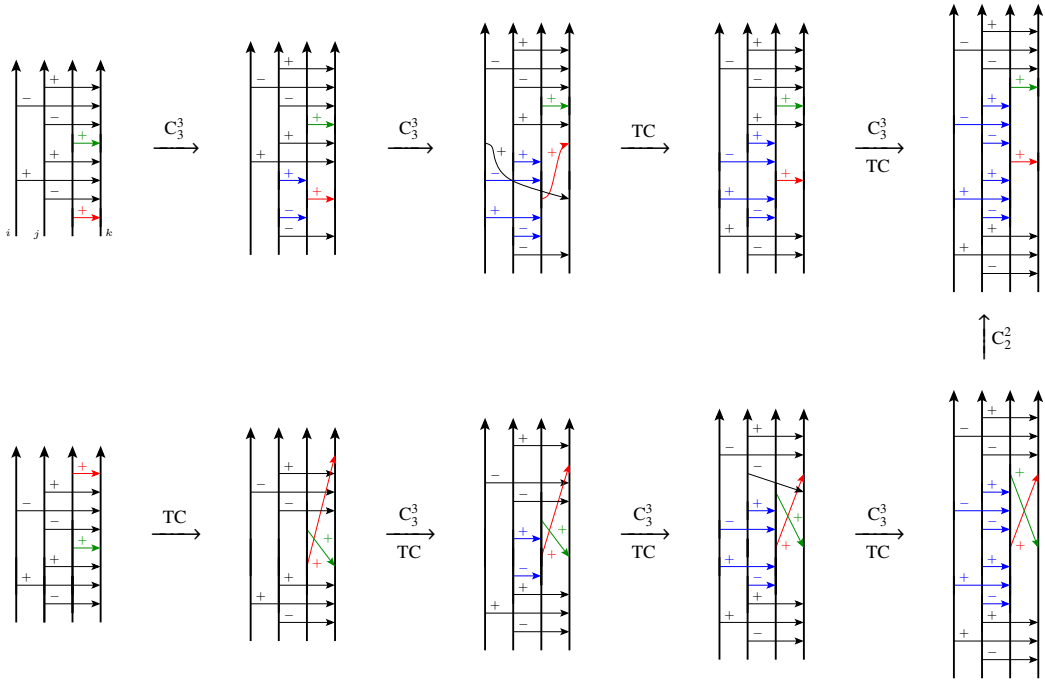

  \[
\begin{array}{ccccccccc}
\dessinH{1.7cm}{Ex_1}&\xrightarrow[]{\ \Ctt\
}&\dessinH{1.7cm}{Ex_2}&\xrightarrow[]{\ \Ctt\
}&\dessinH{1.7cm}{Ex_3}&\xrightarrow[]{\ \TC\
}&\dessinH{1.7cm}{Ex_4}&\xrightarrow[\ \TC\ ]{\ \Ctt\ }&\dessinH{1.7cm}{Ex_5}\\
&&&&&&&&\rotatebox{90}{$\ \xrightarrow[\ \rotatebox{270}{\scriptsize $\hspace{-.2cm}\Cdd$}\ ]{}\ $}\\
\dessinH{1.7cm}{Ex_10}&\xrightarrow[]{\ \TC\ }&\dessinH{1.7cm}{Ex_9}&\xrightarrow[\ \TC\ ]{\ \Ctt\ }&\dessinH{1.7cm}{Ex_8}&\xrightarrow[\ \TC\ ]{\ \Ctt\ }&\dessinH{1.7cm}{Ex_7}&\xrightarrow[\ \TC\ ]{\ \Ctt\ }&\dessinH{1.7cm}{Ex_6}
\end{array}
\]
  \caption{Illustration of the two-step procedure in the proof of equality (\ref{eq:mu})}
  \label{fig:proofGD}
\end{figure}

\begin{defi}
The map   $\func{\fAD}{\AutC(\RFn)}{\wGDHn}$  
is defined as the composition $\eta\circ\big(\prod_{i=1}^n\rho_i\big)$.
\end{defi}

The example for $\fAD$ given in Figure \ref{fig:phiAG} is to be compared with Remark \ref{rk:asc}. 
 
\begin{figure}[h!]
\[
  \xymatrix@C=1.5cm@R=-1.2cm{
&(x_2x_3,x_3^{-1},x_3,x_3^{-1}x_1)&\\
\left\{ {\begin{array}{ccl}
          x_1 & \mapsto & x_1^{x_2x_3}\\[.15cm]
          x_2 & \mapsto & x_2^{x_3^{-1}}\\[.25cm]
          x_3 & \mapsto & x_3\\[.15cm]
          x_4 & \mapsto & x_4^{x_3^{-1}x_1}
        \end{array}}
    \right.
    \ar@<-.5cm>@{|->}[rr]_{\fAD}\ar@<.5cm>@{|->}[r]^{\prod \rho_i}&\textcolor{white}{(x_2x_3,x_3^{-1},x_3,x_3^{-1}x_1)}\ar@<.5cm>@{|->}[r]^{\eta}&
    \dessin{2.7cm}{Gasc2}
  }
\]
\caption{An example for $\fAD$}\label{fig:phiAG}
\end{figure}

\begin{lemme}\label{lem:inverse}
  The map $\fAD$ is an inverse for $\fDA$.
\end{lemme}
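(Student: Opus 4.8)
The plan is to establish the two identities $\fDA\circ\fAD=\Id_{\AutC(\RFn)}$ and $\fAD\circ\fDA=\Id_{\wGDHn}$ separately. Both arguments rest on two facts already available: the explicit reading of $\fDA$ on ascending Gauss diagrams given in Remark~\ref{rk:asc}, and Lemma~\ref{lem:ascending}, which guarantees that every class in $\wGDHn$ has an ascending representative (and even one without self-arrows, since those are killed in $\wGDHn$).

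First I would treat $\fDA\circ\fAD=\Id$. Given $\varphi\in\AutC(\RFn)$, set $g_i:=\rho_i(\varphi)\in\RF^{(i)}_{n-1}$, so that $\varphi(x_i)=x_i^{g_i}$, and fix words representing the $g_i$. By construction, the diagram $G:=\fAD(\varphi)=\eta(g_1,\dots,g_n)$ is ascending, and along each strand $I_i$ its heads correspond, in order from bottom to top, to the letters of $g_i$: the arrow carrying the $k$-th head of $I_i$ has its tail on the strand named by the $k$-th letter of $g_i$ and the matching sign. Remark~\ref{rk:asc} then identifies $\fDA(G)$ with the automorphism conjugating each $x_i$ precisely by the word $g_i$; hence $\fDA(\fAD(\varphi))(x_i)=x_i^{g_i}=\varphi(x_i)$ for every $i\in\UnN$, and since an automorphism is determined by the images of the generators, $\fDA\circ\fAD=\Id$.

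Next I would prove $\fAD\circ\fDA=\Id$. By Lemma~\ref{lem:ascending} it suffices to check $\fAD(\fDA(G))=G$ in $\wGDHn$ for $G$ an ascending diagram without self-arrows. Applying Remark~\ref{rk:asc} in the other direction, $\fDA(G)$ conjugates $x_i$ by the word $g_i\in\RF^{(i)}_{n-1}$ obtained by reading the heads of $I_i$ in order; the uniqueness built into the definition of $\rho_i$ then forces $\rho_i(\fDA(G))=g_i$, so that $\fAD(\fDA(G))=\eta(g_1,\dots,g_n)$. Now $\eta(g_1,\dots,g_n)$ and $G$ are both ascending and, by the very correspondence used in the previous paragraph, carry the same family of arrows — same head-strand, tail-strand and sign for each — with the heads occurring in the same order along every strand; the only possible discrepancy is the order in which the tails appear along a given strand. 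But in an ascending diagram the tails on a strand form a consecutive block lying below every head, so these orders can be reconciled by a sequence of Tail Commute moves (followed by an isotopy of the underlying strands), which shows $\eta(g_1,\dots,g_n)=G$ in $\wGDHn$. This proves the second identity, hence the lemma.

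The only step that is not pure bookkeeping is this last identification: recognizing that two ascending diagrams bearing literally the same arrows, but with their tails possibly permuted along the strands, coincide in $\wGDHn$. What makes it harmless is exactly that the ascending condition keeps the tails on each strand consecutive, so that adjacent tails may be transposed by $\TC$ — the feature that is lost in the non-welded setting (compare the remark following Theorem~\ref{th:wGD=wGP}). I expect this to be the crux; the remainder is a careful translation through Remark~\ref{rk:asc} and the definitions of $\eta$ and $\rho_i$.
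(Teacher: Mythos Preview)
Your proof is correct and follows the same route as the paper: first check $\fDA\circ\fAD=\Id$ directly from the construction of $\eta$, then use Lemma~\ref{lem:ascending} to reduce $\fAD\circ\fDA=\Id$ to the ascending case. The paper's version is extremely terse (``by construction'' and ``clear for ascending Gauss diagrams''); you have unpacked precisely the content behind those phrases, including the observation that the only ambiguity between $G$ and $\eta(g_1,\dots,g_n)$ is the ordering of tails along each strand, which is absorbed by $\TC$ moves.
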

\begin{proof}
  The relation $\fDA\circ\fAD=\Id_{\AutC(\RFn)}$ holds by construction. 
  The fact that $\fAD\circ\fDA=\Id_{\wGDHn}$
  is clear for ascending Gauss diagrams, which is enough, by Lemma \ref{lem:ascending}.
\end{proof}

We conclude with the following corollary which, curiously enough, is not easily checked directly. 
\begin{cor}
  The map $\fAD$ is a group homomorphism.
\end{cor}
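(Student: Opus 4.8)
The plan is to deduce the corollary formally, without touching the combinatorial definition of $\eta$ or of the maps $\rho_i$: the only inputs needed are that $\fDA\colon\wGDHn\to\AutC(\RFn)$ is a group homomorphism (already observed just after the definition of $\fDA$) and that $\fAD$ is a two-sided inverse for it (Lemma~\ref{lem:inverse}).

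First I would unpack Lemma~\ref{lem:inverse} to record the two identities $\fDA\circ\fAD=\Id_{\AutC(\RFn)}$ and $\fAD\circ\fDA=\Id_{\wGDHn}$. In particular $\fDA$ is a bijection whose inverse is exactly $\fAD$; being moreover a group homomorphism, $\fDA$ is then a group isomorphism, and $\fAD=\fDA^{-1}$.

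It then remains only to invoke the elementary fact that the set-theoretic inverse of a bijective group homomorphism is itself a homomorphism. Concretely, given $\varphi,\psi\in\AutC(\RFn)$, set $G:=\fAD(\varphi)$ and $H:=\fAD(\psi)$, so that $\fDA(G)=\varphi$ and $\fDA(H)=\psi$ by $\fDA\circ\fAD=\Id$; then
\[
\fAD(\varphi\psi)=\fAD\bigl(\fDA(G)\fDA(H)\bigr)=\fAD\bigl(\fDA(G\bullet H)\bigr)=G\bullet H=\fAD(\varphi)\bullet\fAD(\psi),
\]
using that $\fDA$ is a homomorphism and that $\fAD\circ\fDA=\Id_{\wGDHn}$.

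There is no genuine obstacle here; the content of the remark is simply that the homomorphism property, awkward to verify straight from the construction of $\fAD$, becomes transparent once one knows that $\fAD$ coincides with $\fDA^{-1}$. A direct check would amount to comparing, for arbitrary conjugating automorphisms $\varphi$ and $\psi$, the ascending Gauss diagram produced by $\eta$ from a word representing the composite $\rho_i(\varphi\psi)$ with the stacking product of the two diagrams built from words for $\rho_i(\varphi)$ and $\rho_i(\psi)$ — a bookkeeping exercise that the argument above bypasses entirely.
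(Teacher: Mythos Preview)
Your proposal is correct and matches the paper's approach exactly: the corollary is stated immediately after Lemma~\ref{lem:inverse} with no further argument, the intended proof being precisely that the inverse of the group homomorphism $\fDA$ is automatically a homomorphism. Your closing remark also echoes the paper's own comment that this fact ``is not easily checked directly'' from the construction of $\fAD$.
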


 \subsection{Relation with welded string link and ribbon
   tubes}
\label{ref:CorrTTleMonde}

As the notation suggests, welded Gauss diagrams are merely combinatorial tools for describing and studying 
welded string links.

Indeed, to a string link $D$, one can associate a Gauss diagram
$G$ by considering the strands of $G$ as parametrization intervals for
the strands of $D$ and add arrows between the preimages of each classical crossing of $D$, oriented from the overpassing strand to the underpassing
one, and signed by the sign of the corresponding crossing.
Then Reidemeister moves on $D$ correspond to the eponymous moves on $G$,
$\OC$ corresponds to $\TC$ and a self-virtualization corresponds to
the addition or removal of a self-arrow.

Conversely, to a Gauss diagram $G$, one can associate a string link by
drawing one crossing $c_a$ for each arrow $a$ labelled by the sign of $a$,
and then, for each piece of strand of $G$ comprised between two ends
$e_1$ and $e_2$ ---with $e_1$ lower than $e_2$ and $a_1$
and $a_2$ the arrows that respectively contain $e_1$ and $e_2$--- connecting the
outgoing endpoint of the overpassing (resp. underpassing) strand of $c_{a_1}$ if $e_1$ is a
tail (resp. a head), to the ingoing endpoint of the overpassing (resp. underpassing) strand of
$c_{a_2}$ if $e_2$ is a
tail (resp. a head) ---this can always be done at the cost of adding
some virtual crossings--- and finally closing similarly the string link to its
bottom and top endpoints according to the lowest and highest
portions of strands of $G$. See Figure \ref{fig:DGD_Corr} for an example. 
This is well defined up to virtual moves and the correspondence
between moves on diagrams and moves on Gauss diagrams mentioned above
also holds.

\begin{figure}[!h]
  \[
\xymatrix{
\dessin{3.7cm}{GDn}\ar@{^<-_>}[r]&\dessin{3.5cm}{SLDn}
}
  \]
  \caption{Correspondence between string link diagrams and Gauss diagrams}
  \label{fig:DGD_Corr}
\end{figure}
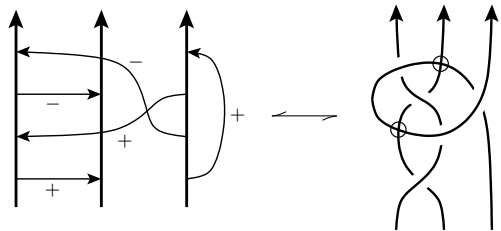

Moreover, it is easily seen that horizontality for Gauss diagrams corresponds to
monotony for string links.
As a consequence we have the following statement, which is considered as folklore in the literature:
\begin{prop}\label{prop:DiagGaussDiag}
$\wSLn\cong \wGDn$, $\wPn\cong \wGPn$, $\wSLHn\cong \wGDHn$ and
$\wPHn\cong \wGPHn$  as monoids.
Moreover, in these isomorphisms, there is a one-to-one correspondence
between
\begin{itemize}
\item classical crossings and arrows;
\item overstrands and tail intervals.
\end{itemize}
\end{prop}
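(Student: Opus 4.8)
The plan is to exhibit explicit, mutually inverse maps between welded string link diagrams and Gauss diagrams, and check that they descend to the various quotients compatibly with the stacking product and the stated bookkeeping correspondences. First I would make precise the assignment $D \mapsto G_D$: given a virtual string link diagram $D$, use the strands of $D$ (as parametrized intervals) as the thick strands of $G_D$, and for each classical crossing of $D$ insert a thin arrow running from the preimage on the overpassing strand to the preimage on the underpassing strand, signed by the sign of the crossing. Virtual crossings are simply forgotten. Conversely, for $G \mapsto D_G$ I would use the recipe already laid out in the text preceding the statement: draw one classical crossing $c_a$ for each arrow $a$, with sign $\varepsilon_a$, connect consecutive ends along each strand of $G$ by arcs (joining outgoing endpoints of the appropriate over/under strand of $c_{a_1}$ to ingoing endpoints of $c_{a_2}$, the over/under choice dictated by whether the end is a tail or a head), closing up at the bottom and top, and inserting virtual crossings wherever arcs are forced to cross; this is well defined up to virtual Reidemeister and mixed moves, which is exactly the ambiguity killed in $\vSLn$.

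Next I would verify the move-by-move dictionary. The key point is that a classical Reidemeister move RI, RII, RIIIa/b on $D$ corresponds precisely to the Gauss-diagram move R1, R2, R3 (with the orientation condition $\tau_1\varepsilon_1=\tau_2\varepsilon_2=\tau_3\varepsilon_3$ matching the condition in Definition \ref{def:Rmoves}), that virtual and mixed Reidemeister moves do not alter $G_D$ at all (they only rearrange virtual crossings, which the Gauss diagram does not record), that the Over Commute move $\OC$ is exactly the Tail Commute move $\TC$ (both swap two adjacent tails, since the over strand at a crossing corresponds to the tail of the arrow), and that a self-virtualization is exactly the addition or deletion of a self-arrow $\SA$. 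These verifications are local and pictorial; each is a routine inspection of the relevant figure. Having established the dictionary, it follows that $D \mapsto G_D$ and $G \mapsto D_G$ induce mutually inverse bijections $\vSLn \cong \vGDn$, and hence, passing to the respective quotients, $\wSLn \cong \wGDn$ (quotient by $\OC$, resp. $\TC$) and $\wSLHn \cong \wGDHn$ (further quotient by self-virtualization, resp. $\SA$). Compatibility with the stacking product $\bullet$ is immediate, since both constructions concatenate strands top-to-bottom, so these are monoid isomorphisms.

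For the monotone/horizontal statements, I would observe that a virtual string link diagram is monotone (Definition \ref{def:MonotoneSLn}) exactly when, reading along the height parameter, no strand ever turns back, which under $D \mapsto G_D$ translates verbatim into the existence of a global order on the arrows of $G_D$ with the property in the Remark following Definition \ref{def:Rmoves} characterizing horizontality; conversely a horizontal Gauss diagram yields a monotone diagram $D_G$ after the virtual crossings are arranged. Hence the sub-monoids match: $\wPn \cong \wGPn$ and $\wPHn \cong \wGPHn$. Finally, the two one-to-one correspondences in the statement are built into the definitions: classical crossings of $D$ are by construction in bijection with arrows of $G_D$; and an overstrand of $D$ (Section \ref{sec:tubemap}, a maximal piece of strand containing no lowest preimage in its interior) corresponds precisely to a tail interval of $G_D$ (a piece of strand between successive heads and/or endpoints, containing possibly tails but no head), since heads of arrows are exactly the lowest preimages of classical crossings. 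I do not expect a genuine obstacle here — the proposition is essentially a change of bookkeeping — but the one point demanding care is checking that the orientation-sign condition on R3 in the Gauss-diagram setting corresponds correctly to the admissible RIII moves on diagrams (and similarly that the simplification of this condition in the welded case noted in the Remark is consistent), so I would devote the bulk of the written proof to that single figure-level check.
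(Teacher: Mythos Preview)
Your proposal is correct and follows essentially the same approach as the paper: the paper itself presents this proposition as folklore, with the ``proof'' being precisely the dictionary described in the two paragraphs preceding the statement (the mutually inverse assignments $D\mapsto G_D$ and $G\mapsto D_G$, the move-by-move correspondence $\Reid\leftrightarrow\Reid$, $\OC\leftrightarrow\TC$, self-virtualization $\leftrightarrow\SA$, and the observation that horizontality corresponds to monotony). You have simply spelled out these checks in more detail, which is exactly what a complete write-up of this folklore result should do.
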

\noindent Note that these correspondences already hold in the virtual setting.

Using these correspondences, constructions and results on
welded string link can be transferred to Gauss diagrams and {\it vice versa}. 
In particular, we obtain Theorems \ref{th:wSLh=wPh} and \ref{th:wSLh=AutC} as direct corollaries of Theorems \ref{th:wGD=wGP} and \ref{th:wGP=AutC}, respectively.  

Furthermore, Satoh's $\Tube$ map can be defined on $\wGDn$.
By abuse of notation, we will still denote it by $\Tube$.
Using Notation \ref{Nota:TailsIntervals}, it is a consequence of Proposition \ref{prop:DWirtinger}
that 
\begin{prop}\label{prop:GaussWirtinger}
  For every Gauss diagram,
\[
\pi_1\big(\Tube(G)\big)\cong\big\langle
T_G\ \big|\ T_h^+=(T_h^-)^{(T_h^0)^{\e_h}}\textrm{
for each arrow head }h\textrm{ of }G \big\rangle,
\]
\noi where $\e_h$ is the sign of the arrow that contains the head $h$.
Moreover, through this isomorphism and for every
$i\in\UnN$, the generator $T_i^-$ (resp. $T_i^+$)
corresponds to the loop that positively enlaces the $i^\textrm{th}$
bottom (resp. top) meridian of $\Tube(G)$.
\end{prop}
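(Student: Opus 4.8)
The plan is to deduce this presentation from the one already available for welded string link diagrams, transported across the diagram/Gauss\nobreakdash-diagram dictionary of Section \ref{ref:CorrTTleMonde}. First I would choose, for the given Gauss diagram $G$, a welded string link diagram $L$ representing it as in Section \ref{ref:CorrTTleMonde}; by construction of the $\Tube$ map on $\wGDn$ one then has $\Tube(G)=\Tube(L)$, so Proposition \ref{prop:DWirtinger} gives $\pi_1(\Tube(G))\cong\pi_1(L)$, and by definition $\pi_1(L)=\big\langle\Over(L)\mid s_c^+=(s_c^-)^{(s_c^0)^{\e_c}}\textrm{ for all }c\in\Cross(L)\big\rangle$.

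Next I would translate each ingredient of this presentation through the bijections of Proposition \ref{prop:DiagGaussDiag}: classical crossings of $L$ correspond to arrows of $G$, and overstrands of $L$ correspond to tail intervals of $G$ --- the latter because overstrands are exactly the portions of strand lying between two consecutive lowest preimages of classical crossings and/or strand endpoints, which under the dictionary are precisely the portions lying between consecutive arrow heads and/or strand endpoints, i.e.\ the tail intervals. Fixing an arrow with head $h$, tail $t$ and sign $\e_h$, corresponding to a crossing $c$ with $\e_c=\e_h$, I would check that $s_c^0\leftrightarrow T_h^0$ (both are carried by the overpassing strand, which contains the tail $t$), and that $s_c^-\leftrightarrow T_h^-$ and $s_c^+\leftrightarrow T_h^+$ (the overstrands ending, resp.\ starting, at the lowest preimage of $c$, i.e.\ at $h$). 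Substituting these identifications into the presentation of $\pi_1(L)$ yields exactly $\pi_1(\Tube(G))\cong\big\langle T_G\mid T_h^+=(T_h^-)^{(T_h^0)^{\e_h}}\big\rangle$.

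For the last assertion I would trace the meridians through the same chain of identifications. Recall from the construction of $\Tube$ that overstrands of $L$ correspond to the outside annuli of the associated symmetric broken surface diagram, and that in the isomorphism of Proposition \ref{prop:DWirtinger} --- via Proposition \ref{prop:WirtingerProjection} --- an outside annulus $\alpha$ is sent to $m_a$ for $a$ a point on $\alpha$ close to $\p\alpha$. Since the tail interval $T_i^-$ (Notation \ref{Nota:TailsIntervals}) contains the bottom endpoint of $I_i$, its outside annulus meets the boundary circle $C_i\times\{0\}$ of $\Tube(G)$, so the corresponding generator is the bottom meridian; symmetrically $T_i^+$ yields the top meridian.

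The main obstacle is purely bookkeeping: one must verify, comparing Figures \ref{fig:RulePN}, \ref{fig:DGD_Corr}, \ref{fig:Inflating} and \ref{fig:BS_Signs}, that the roles $0/-/+$ of the strands at a crossing and the signs are matched correctly across all three translations (diagram $\leftrightarrow$ Gauss diagram $\leftrightarrow$ symmetric broken surface diagram), and that the convention ``tail $=$ over, head $=$ under'' for Gauss diagrams is compatible with ``highest preimage $=$ over'' for diagrams and with the sign conventions of the Wirtinger presentation of Proposition \ref{prop:WirtingerProjection}. Beyond this matching of conventions, no new topological input is needed, everything following from Propositions \ref{prop:DWirtinger}, \ref{prop:DiagGaussDiag} and \ref{prop:WirtingerProjection}.
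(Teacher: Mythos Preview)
Your proposal is correct and follows exactly the approach intended by the paper, which simply records the proposition as ``a consequence of Proposition \ref{prop:DWirtinger}'' via the dictionary of Proposition \ref{prop:DiagGaussDiag}. You have spelled out the bookkeeping (overstrands $\leftrightarrow$ tail intervals, classical crossings $\leftrightarrow$ arrows, and the matching of $s_c^{0},s_c^{\pm},\e_c$ with $T_h^0,T_h^{\pm},\e_h$) in more detail than the paper does, but the argument is the same.
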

\begin{nota}\label{nota:WirtingerMap}
   For every Gauss diagram $G$, we denote by $W_G:T_G\to
   R\pi_1\big(\Tube(G)\big)$ the map induced by the isomorphism mentioned in the
   previous proposition.
\end{nota}

Recall that $\varphi$ denotes the isomorphism of the classification theorem \ref{th:Iso}. We have 
\begin{lemme}\label{lem:FVarphi}
  $\fDA=\varphi\circ\Tube$. 
\end{lemme}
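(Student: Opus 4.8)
The plan is to verify the identity pointwise on a representative Gauss diagram $G\in\wGDHn$, the essential observation being that the coloring map $\xi_G$ of Lemma \ref{lem:wtf} is nothing but the Wirtinger map $W_G$ of Notation \ref{nota:WirtingerMap} transported to $\RFn$ along the isomorphism $\iota_0^*$ of Proposition \ref{prop:ReducedIso}. Both $\fDA$ and $\varphi\circ\Tube$ are already known to be well-defined group homomorphisms $\wGDHn\to\AutC(\RFn)$ (by Lemma \ref{lem:wtf}, Proposition \ref{prop:DBraidIso} together with Proposition \ref{prop:DiagGaussDiag}, and Theorem \ref{th:Iso}), so checking the equality on a single Gauss diagram suffices.

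First I would set up the dictionary. Write $T=\Tube(G)$. By Proposition \ref{prop:GaussWirtinger}, $R\pi_1(T)$ has the reduced presentation $\langle\, T_G \mid T_h^+=(T_h^-)^{(T_h^0)^{\e_h}}\,\rangle$, and under the isomorphism $\RFn\cong R\pi_1(\p_0T)\xrightarrow{\ \iota_0^*\ }R\pi_1(T)$ the generator $x_i$ is sent to $W_G(T_i^-)$, the reduced class of the $i^\textrm{th}$ bottom meridian; likewise the composite isomorphism $\RFn\cong R\pi_1(\p_1T)\xrightarrow{\ \iota_1^*\ }R\pi_1(T)$ sends $x_i$ to $W_G(T_i^+)$, the reduced class of the $i^\textrm{th}$ top meridian. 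Since $\varphi_T=(\iota_0^*)^{-1}\circ\iota_1^*$ by definition, this already gives $\varphi_T(x_i)=(\iota_0^*)^{-1}\big(W_G(T_i^+)\big)$ for every $i\in\UnN$.

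Next I would identify $(\iota_0^*)^{-1}\circ W_G\colon T_G\to\RFn$ with $\xi_G$. As $\iota_0^*$ is a group isomorphism, it carries the defining relations of $R\pi_1(T)$ exactly onto the relations of Lemma \ref{lem:wtf}, i.e.\ $(\iota_0^*)^{-1}(W_G(T_h^+))=\big((\iota_0^*)^{-1}(W_G(T_h^-))\big)^{\big((\iota_0^*)^{-1}(W_G(T_h^0))\big)^{\e_h}}$ for every head $h$, which is condition (\ref{cond:ps}); and $(\iota_0^*)^{-1}(W_G(T_i^-))=x_i$ is condition (\ref{cond:pp}). By the uniqueness clause of Lemma \ref{lem:wtf}, $(\iota_0^*)^{-1}\circ W_G=\xi_G$. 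In particular $\fDA(G)(x_i)=\xi_G(T_i^+)=(\iota_0^*)^{-1}\big(W_G(T_i^+)\big)=\varphi_T(x_i)$ for all $i$, hence $\fDA(G)=\varphi_{\Tube(G)}=(\varphi\circ\Tube)(G)$.

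The only point requiring care is the bookkeeping in the second paragraph: one must check that the Wirtinger generators $T_i^\pm$ genuinely correspond to the bottom/top meridians in a way compatible with the identifications $\RFn\cong R\pi_1(\p_\e T)$ entering the definition of $\varphi_T$, and that passing to the reduced quotient is compatible on both sides — this is precisely what Proposition \ref{prop:GaussWirtinger} supplies, combined with the fact that $R\pi_1(T)$ is normally generated by either family of meridians. Once these identifications are pinned down, the statement follows formally from the uniqueness of the coloring $\xi_G$.
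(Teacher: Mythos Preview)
Your argument is correct and follows the paper's proof essentially verbatim: identify $(\iota_0^*)^{-1}\circ W_G$ with $\xi_G$ via the uniqueness clause of Lemma~\ref{lem:wtf}, then use $\iota_1^*(x_i)=W_G(T_i^+)$ from Proposition~\ref{prop:GaussWirtinger} to conclude.

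One bookkeeping issue in your opening paragraph: you cite Proposition~\ref{prop:DBraidIso} and Theorem~\ref{th:Iso} to justify that $\varphi\circ\Tube$ is well defined on $\wGDHn$, but in the paper's logical order both of these results depend on Corollary~\ref{cor:TubeInj}, which is itself a consequence of the very lemma you are proving. This is not a genuine gap, since the well-definedness you actually need is available earlier (surjectivity and well-definedness of $\Tube$ from Proposition~\ref{prop:Satoh}, and link-homotopy invariance of $\varphi$ from the proposition preceding Theorem~\ref{th:Iso}), and in any case your core computation in paragraphs two and three works directly on an arbitrary representative and does not rely on these citations. Just replace the references to avoid the apparent circularity.
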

\begin{proof}
  We freely use the notation of Section \ref{sec:Action}.  
  Let $G$ be a Gauss diagram.
  We consider $\iota_0$ and $\iota_1$ the inclusion
  maps associated to $\Tube(G)$.
  It follows from the Wirtinger presentation that the map ${\iota_0^*}^{-1}\circ
  W_G$ satisfies
  conditions (\ref{cond:pp}) and (\ref{cond:ps}) of Lemma \ref{lem:wtf}. By uniqueness, it follows
  that ${\iota^*_0}^{-1}\circ W_G$ is the coloring map $xi_G$ of this lemma.
  Moreover, by Proposition \ref{prop:GaussWirtinger} and the
  definition of $\iota_1$, we also know that
  $\iota_1^*(x_i)=W_G(T^+_i)$.
  So, finally, we get
\[
\varphi\big(\Tube(G)\big)(x_i)=\big({\iota_0^*}^{-1}\circ \iota_1^*\big)(x_i)={\iota_0^*}^{-1}\big(W_G(T^+_i)\big)=\xi_G(T^+_i).
\]
\end{proof}
Since $\fDA$ is an isomorphism, we obtain:
\begin{cor}\label{cor:TubeInj}
  The map $\Tube:\wGDHn\to\rTHn$ is injective.
\end{cor}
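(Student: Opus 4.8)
The plan is to deduce injectivity of $\Tube$ purely formally from the factorization recorded in Lemma \ref{lem:FVarphi}. That lemma asserts the equality $\fDA=\varphi\circ\Tube$ of maps $\wGDHn\to\AutC(\RFn)$, where $\Tube\colon\wGDHn\to\rTHn$ and $\varphi\colon\rTHn\to\AutC(\RFn)$ is the map appearing in the classification Theorem \ref{th:Iso}. Since $\fDA$ is an isomorphism — this is Theorem \ref{th:wGP=AutC}, established through the coloring construction of Lemma \ref{lem:wtf} together with the explicit inverse $\fAD$ of Lemma \ref{lem:inverse} — it is in particular injective. A map that factors as $\varphi\circ\Tube$ can only be injective if its right-most factor $\Tube$ is: if $\Tube(G)=\Tube(G')$ in $\rTHn$, then $\fDA(G)=\varphi(\Tube(G))=\varphi(\Tube(G'))=\fDA(G')$, whence $G=G'$. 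This is precisely the one-line argument anticipated in the text by ``Since $\fDA$ is an isomorphism, we obtain''.

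\textbf{Key steps.} First, invoke Lemma \ref{lem:FVarphi} to obtain $\fDA=\varphi\circ\Tube$; here one should note only the bookkeeping point that this is an equality of maps of monoids on the nose, which is immediate since both sides are determined by their common values $x_i\mapsto\xi_G(T_i^+)$ as computed in that lemma's proof. Second, invoke Theorem \ref{th:wGP=AutC} (equivalently, Lemmas \ref{lem:wtf} and \ref{lem:inverse}) for the injectivity of $\fDA$; since $\wGDHn$ is moreover a group (the corollary following Theorem \ref{th:wGD=wGP}) and $\fDA$ a group isomorphism, there is no subtlety between ``injective as a map of sets'' and any stronger notion. Third, conclude that $\Tube$ is injective by the elementary fact that a left factor of an injective map is injective.

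\textbf{Main obstacle.} For the corollary itself there is no genuine obstacle: all the substantive work has been carried out upstream, the delicate parts being the proof that $\fDA$ factors through $\Tube$ (Lemma \ref{lem:FVarphi}, resting on the uniqueness clause of Lemma \ref{lem:wtf} and the Wirtinger presentation of Proposition \ref{prop:GaussWirtinger}) and the proof that $\fDA$ is an isomorphism (Theorem \ref{th:wGP=AutC}, whose hard direction is the well-definedness of $\fAD$, i.e. Lemma \ref{lem:mu}). The corollary is the harvest of these.

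\textbf{Remark to record.} Although the corollary only claims injectivity, it is worth pointing out that combining it with the surjectivity of $\Tube$ (Proposition \ref{prop:Satoh}, transported to $\wGDHn$ via Proposition \ref{prop:DiagGaussDiag}) shows $\Tube\colon\wGDHn\to\rTHn$ is a bijection, hence, together with Proposition \ref{prop:DiagGaussDiag}, that $\Tube\colon\wSLHn\to\rTHn$ is an isomorphism of monoids — which is exactly Proposition \ref{prop:DBraidIso}. This is the point in the paper where injectivity of the Tube map, at least up to link-homotopy, is finally obtained.
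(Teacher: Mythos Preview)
Your proposal is correct and follows exactly the paper's own argument: the corollary is deduced immediately from Lemma~\ref{lem:FVarphi} ($\fDA=\varphi\circ\Tube$) together with the injectivity of $\fDA$ from Theorem~\ref{th:wGP=AutC}, just as the paper indicates with the single line ``Since $\fDA$ is an isomorphism, we obtain''. One minor terminological slip: in your third step you call $\Tube$ the ``left factor'', but in $\varphi\circ\Tube$ it is the right (first-applied) factor---your spelled-out argument is nonetheless the correct one.
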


Since $\Tube$ was already known to be surjective, we obtain that the map $\varphi:\rTHn\to\AutC(\RFn)$ is a group isomorphism, as stated in Theorem \ref{th:Iso}.


 
 \section{Virtual Milnor invariants}
 \label{sec:milnor}

John Milnor defined in the fifties a family of link invariants, known today as Milnor's $\om$-invariants \cite{Milnor,Milnor2}.
Given an $n$--component link $L$ in the $3$--sphere, Milnor invariants $\om_I(L)$ of $L$ are defined for any finite sequence $I$ of (possibly repeating) indices in $\UnN$. 
Roughly speaking, Milnor's $\om$--invariants measure the longitudes in the lower central series of the fundamental group of the link complement.   
These invariants, however, are in general not well-defined integers, and their definition contains a rather intricate self-recurrent indeterminacy $\Delta$.
In \cite{HL2}, Habegger and Lin showed that the indeterminacy in Milnor invariants of a link is equivalent to the indeterminacy in representing this link as the closure of a string link, and that Milnor invariants are actually well defined invariants of string links; the latter integer--valued string link invariants are usually denoted $\mu_I$.  

Several authors have given tentative extensions of Milnor invariants to virtual knot theory \cite{DK,KP,kotorii}. 
They are all based on various combinatorial approaches of Milnor invariants, and are only partial extensions; in particular, they are limited to the case of link-homotopy invariants, that is, to the case of Milnor invariants indexed by sequences with no repetitions. 
In this section, we give a full extension of Milnor invariants to the virtual setting, in what seems to be the most natural way. 
Habegger and Lin's observation mentioned above suggests that it is most natural to consider the \emph{string link} case. 
Moreover, we have seen in the introduction that we are actually seeking for an invariant of \emph{welded} string links, rather than virtual, 
since Milnor invariant are extracted from the fundamental group, which is a welded invariant.
Finally, our construction is purely topological, as we shall see below, and well-behaved with respect to the Tube map, so that 
it naturally coincides with Milnor's original invariants when restricted to classical string links (see Theorem \ref{dlaballe}). 

\subsection{Milnor invariants for ribbon tubes}\label{sec:ribbon-milnor-invariants}

In this subsection, we develop a higher-dimensional analogue of Milnor invariants for ribbon tubes. 
This follows closely, and generalizes the construction given in Section \ref{sec:RedFundGr} in the case of the reduced free group. 
As a consequence, we freely borrow all notation from there.

Let $T$ be a ribbon tube with tube components $\psqcup_{i\in\UnN}A_i$.
and let $W=B^4\setminus \overstar{N}(T)$ be the complement of an open tubular neighborhood of $T$ in $B^4$.
By Proposition \ref{prop:TubeHomology}, the inclusion $\iota_\e:\p_\e
W\hookrightarrow W$ ($\e\in\{0,1\}$) induces isomorphisms both at the level of the first and second homology groups, so by Stallings theorem the maps 
\[
(\iota_\e)_k:\fract{\pi_1(\p_\e W)}/{\Gamma_k \pi_1(\p_\e W)}
\xrightarrow[]{\hspace{.3cm} \simeq\hspace{.3cm} }\fract{\pi_1(T)}/{\Gamma_k \pi_1(T)}
\]
\noi are isomorphisms for every $k\in \mathbb{N}^*$.

Recall from Remark \ref{rk:preflong} that the preferred $i^\textrm{th}$ longitude $\lambda_i$ of $T$ is longitude of $A_i$ having linking number zero 
with $A_i$.
For any $k\in\N$ and $i\in\UnN$, we define
 $\lambda^k_i:=(\iota_0)^{-1}_k(\lambda_i)\in\fract{\Fn}/{\Gamma_k\Fn}$. 
 
Denote by $\Z\langle\langle X_1,\cdots,X_n\rangle\rangle$ the ring of
formal power series in non-commutative variables $X_1,\cdots,X_n$.
The \emph{Magnus expansion} $E:\Fn\to \Z\langle\langle
X_1,\cdots,X_n\rangle\rangle$ is the injective group homomorphism
which maps $x_i$ to $1 + X_i$ and $x_i^{-1}$ to
$\psum_{k\in\N}(-1)^kX_i^k$, for each $i\in\UnN$.  
\begin{prop}
  For any $m\in\N^*$, any $(i_1,\cdots,i_m)\in\UnN^m$,
  any $j\in\UnN$ and any integer $k>m$, the coefficient $\mu^{(4)}_{i_1,\cdots,i_m j}(L)$ of
  $X_{i_1}\cdots X_{i_m}$ in the Magnus expansion $E(\lambda_j^k)$ is a well-defined invariant of $T$.
\end{prop}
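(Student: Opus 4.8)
The plan is to follow the definition of $\mu^{(4)}_{i_1\cdots i_m j}(L)$ through its three ingredients and check that each indeterminacy involved is harmless. The ingredients are: the preferred $j^{\textrm{th}}$ longitude $\lambda_j\in\pi_1(T)$; the Stallings isomorphism $(\iota_0)_k\colon \fract{\Fn}/{\Gamma_k\Fn}\xrightarrow{\ \simeq\ }\fract{\pi_1(T)}/{\Gamma_k\pi_1(T)}$ recalled just above the proposition; and the Magnus expansion $E$.

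First I would check that $\lambda_j$ is a well-defined element of $\pi_1(T)$, and not merely a conjugacy class. The closing arc $c_j^0\cup c_j^1$ turns each choice of arc on $\p N(A_j)$ into a definite element of $\pi_1(T)$, and by Remark~\ref{rk:preflong} any two such choices differ by a power of the bottom meridian $m_j^0$, which is recorded by the linking number with $A_j$; imposing linking number $0$ therefore singles out $\lambda_j$ uniquely. As in Sections~\ref{sec:fun-da-mental} and~\ref{sec:Action}, the pair $\big(\pi_1(T),\lambda_j\big)$ depends only on $T$ up to isotopy fixing the boundary circles. Applying $(\iota_0)_k^{-1}$ --- which makes sense precisely because $(\iota_0)_k$ is an isomorphism, by Stallings' theorem as recalled above --- shows that $\lambda_j^k\in\fract{\Fn}/{\Gamma_k\Fn}$ is well defined and is an isotopy invariant of $T$.

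It then remains to see that the coefficient of $X_{i_1}\cdots X_{i_m}$ in $E(\lambda_j^k)$ does not depend on the choice of a lift of $\lambda_j^k$ to $\Fn$. This is the standard compatibility of the Magnus expansion with the lower central series: writing $I\subset\Z\langle\langle X_1,\cdots,X_n\rangle\rangle$ for the ideal of series with vanishing constant term, one has $E(\Gamma_k\Fn)\subset 1+I^k$ (see \cite{MKS}), so that $E$ descends to a well-defined map $\fract{\Fn}/{\Gamma_k\Fn}\to\fract{(1+I)}/{(1+I^k)}$. In particular, if $w,w'\in\Fn$ both represent $\lambda_j^k$ then $E(w)-E(w')\in I^k$, and since $k>m$ the monomial $X_{i_1}\cdots X_{i_m}$, of degree $m\le k-1$, has the same coefficient in $E(w)$ and in $E(w')$. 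Together with the previous paragraph this proves that $\mu^{(4)}_{i_1\cdots i_m j}(L)$ is a well-defined invariant of $T$. For completeness I would also remark that the value is independent of the auxiliary integer $k>m$: the projections $\fract{\Fn}/{\Gamma_{k+1}\Fn}\to\fract{\Fn}/{\Gamma_k\Fn}$ are compatible with the maps $(\iota_0)_k$ and with $E$ up to truncation, so the $\lambda_j^k$ assemble into a single element of $\varprojlim_k\fract{\Fn}/{\Gamma_k\Fn}$ whose Magnus expansion lives in $\Z\langle\langle X_1,\cdots,X_n\rangle\rangle$ and records all the coefficients $\mu^{(4)}_{i_1\cdots i_m j}(T)$ at once. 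I do not expect a genuine obstacle here: beyond the topological input already in hand --- the existence of a preferred longitude and the Stallings isomorphism --- this is just the classical Milnor--Magnus formalism, and the self-recurrent indeterminacy $\Delta$ that plagues Milnor invariants of closed links does not appear, precisely because ribbon tubes, like string links, come equipped with canonical basings. The one point that deserves care is the first one, namely that $\lambda_j$ --- and not just its conjugacy class --- is canonical, which is exactly what the linking-number normalization secures.
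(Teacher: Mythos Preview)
Your proof is correct. The paper actually states this proposition without proof, treating it as a routine transposition of the classical Milnor--Magnus formalism (see e.g.\ \cite{HL,HM}) to the ribbon tube setting, where all the necessary topological input --- the preferred longitude from Remark~\ref{rk:preflong} and the Stallings isomorphisms $(\iota_0)_k$ --- has already been set up. Your argument spells out exactly the verifications this entails: well-definedness of $\lambda_j\in\pi_1(T)$ via the linking-number normalization, invariance of $\lambda_j^k\in\fract{\Fn}/{\Gamma_k\Fn}$ under isotopy, and independence of the degree-$m$ coefficient from the choice of lift using $E(\Gamma_k\Fn)\subset 1+I^k$. The additional remark on independence from the auxiliary $k>m$ is a nice touch and makes the statement cleaner than what the paper asserts.
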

\begin{defi}
The coefficient $\mu^{(4)}_{i_1. . .i_k}$ of $X_{i_1}\cdots X_{i_m}$ in  $E(\lambda_j^k)$ is called a \emph{Milnor $\mu^{(4)}$--invariant} of length $m+1$. 
\end{defi}
\noindent Here, the $(4)$-exponent refers to the $4$--dimensional nature of ribbon tubes.

For each non negative integer $k$, we define $\rTn(k)$ to be the
submonoid of $\rTn$ consisting of elements with vanishing Milnor $\mu^{(4)}$--invariants
of length lower than $k$.
We thus have a descending filtration of monoids
$\rTn = \rTn(1) \supset \rTn(2) \supset\cdots \supset \rTn(k) \supset
\cdots$, called the \emph{Milnor filtration}.

Now, we generalize the construction given in Section \ref{sec:Action}.
Indeed, for each non negative integer, a ribbon tube
$T$ induces an automorphism $(\iota_0)_k^{-1} \circ(\iota_1)_k$ of
$\fract{\Fn}/{\Gamma_k\Fn}$.
Actually, this automorphism maps $x_i$ to its conjugate by $\lambda_i^k$, for each $i\in\UnN$,  so that we have
defined a monoid homomorphism
\[
A_k : \rTn \to \AutC \big(\fract{\Fn}/{\Gamma_k\Fn}\big).
\]
One can check that $T$ is in $\rTn(k)$ if and only if $\lambda_i^k$ is
trivial modulo $\Gamma_k\Fn$ for all $i\in\UnN$.
So for all non negative integer $k$, we have $\rTn(k) = \Ker(A_k)$,
and we can consider the map
\[
\mu^{(4)}_{k+1} : \rTn(k) \to\fract{\Fn}/{\Gamma_2\Fn}\otimes \fract{\Gamma_k\Fn}/{\Gamma_{k+1}\Fn}.
\]
\noi which maps $T$ to the sum
\[
\mu^{(4)}_{k+1}(T):=\sum_{i\in\UnN} x_i\otimes \lambda^{k+1}_i,
\]
\noi We call it \emph{the universal length k + 1 Milnor invariant}.
This map $\mu^{(4)}_{k+1}$ is strictly equivalent to the collection of all Milnor $\mu^{(4)}$--invariants of length $k + 1$, via the formula
$\mu^{(4)}_{k+1}(L) :=\psum_{i\in\UnN}x_i \otimes E_k(\lambda_i^{k+1})$, where $E_k$ denotes the composition of $E$ with the
projection onto the degree $k$ part, that is
\[
E_k(\lambda_j^{k+1}):=\sum_{i_1,\cdots,i_k\in\UnN^k}\mu^{(4)}_{i_1,\cdots,i_k j}X_{i_1}\cdots
X_{i_k}.
\]
Note that $E_k(\lambda_j^{k+1})$ lives in the isomorphic image of
$\fract{\Gamma_k\Fn}/{\Gamma_{k+1}\Fn}$ in $\Z\langle\langle X_1,\cdots,X_n\rangle\rangle$.

\subsection{Milnor invariants for welded string links}\label{sec:welded-milnor-invariants}

The above $4$-dimensional version $\mu^{(4)}$ of Milnor invariants provides, through the Tube map, 
a natural and general extension $\mu^w$ of Milnor invariants to virtual/welded string links. 
\begin{defi}
For any sequence $I$ of (possibly repeating) indices in $\UnN$, 
\emph{Milnor invariant} $\mu^w_I$ of $n$-component welded string links is defined by 
 $$ \mu^w_{I} := \mu^{(4)}_{I}\circ Tube. $$
\end{defi}

The construction of Milnor invariants for ribbon tubes given in Section \ref{sec:ribbon-milnor-invariants}
is completely parallel to the definition of Milnor invariants of classical string links (as given e.g. in \cite{HL,HM}).
Moreover, the Tube map is well-behaved with respect to the fundamental group of the complement;
not only does it induce an isomorphism of $\pi_1$'s, but it also maps meridians to meridians and (preferred) longitudes to (preferred) longitudes,
so that the Wirtinger presentations are in one--to--one correspondence (see Proposition \ref{prop:DWirtinger} and the discussion preceding it).
As a consequence,  we have the following.
\begin{theo}\label{dlaballe}
For any sequence $I$ of (possibly repeating) indices in $\UnN$, the restriction of $\mu^w_{I}$ to classical string links coincides with Milnor invariant :
 $$ \mu^w_I(L) = \mu_I(L) \; \textrm{, for any classical string link $L$.} $$
\end{theo}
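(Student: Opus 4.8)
The plan is to show that, for a classical string link $L$, the Tube map realizes an isomorphism of the peripheral data of the two complements. Specifically, I would start from the classical setting: $L$ is a string link in $B^2\times I$, with exterior $X = (B^3)\setminus N(L)$, and Milnor invariants $\mu_I(L)$ are defined from the Magnus expansion of the images in $\pi_1(X)/\Gamma_k\pi_1(X)$ of the preferred longitudes of $L$, using Stallings' theorem to identify the lower central quotients of $\pi_1(X)$ with those of the free group $F_n$ generated by the meridians at $\partial_0$ (this is exactly the construction of \cite{HL,HM}, recalled in Section \ref{sec:ribbon-milnor-invariants}). On the $4$-dimensional side, $\mu^{(4)}_I(\Tube(L))$ is defined identically from $W = B^4 \setminus \overstar{N}(\Tube(L))$ and its preferred longitudes. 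So the theorem reduces to a comparison between the pair $(\pi_1(X), \text{longitudes})$ and the pair $(\pi_1(W), \text{longitudes})$.

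The key step is therefore: the inclusion-induced map realizing $\Tube$ at the level of complements gives a group isomorphism $\pi_1(X) \xrightarrow{\sim} \pi_1(W)$ carrying the $i$-th meridian of $L$ (at either end) to the $i$-th meridian of $\Tube(L)$, and the preferred $i$-th longitude of $L$ to the preferred $i$-th longitude of $\Tube(L)$. For the meridian/group part, this is essentially the content of Proposition \ref{prop:DWirtinger}: both $\pi_1(L)$ and $\pi_1(\Tube(L))$ admit Wirtinger presentations on the same generating set (overstrands $\leftrightarrow$ outside annuli) with the same relations (compare Figures \ref{fig:RulePN} and \ref{fig:BS_Signs}), and in these presentations the bottom/top meridians $m_i^\e$ correspond on the nose. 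The subtle point is the longitude: one must check that, under this identification, the meridian-longitude system of $A_i \subset \Tube(L)$ matches the meridian-longitude system of the $i$-th component of $L$, and in particular that the normalization "linking number zero with the $i$-th component" is preserved — which it is, since linking numbers are read off from the abelianization $H_1$, and the isomorphism of $\pi_1$'s induces the identity on $H_1 = \Z^n$ sending $c_i \mapsto c_i$ (by Proposition \ref{prop:TubeHomology} and its classical analogue).

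Granting this, the rest is formal. I would argue as follows. Fix $k$. The isomorphism $\pi_1(X)\cong \pi_1(W)$ is compatible with the inclusions of the boundary pieces $\partial_\e$, hence induces a commutative square relating the Stallings isomorphisms $F_n/\Gamma_k F_n \xrightarrow{\sim} \pi_1(X)/\Gamma_k\pi_1(X)$ and $F_n/\Gamma_k F_n \xrightarrow{\sim} \pi_1(W)/\Gamma_k\pi_1(W)$; both squares use the \emph{same} $F_n$ (meridians at $\partial_0$), and the isomorphism carries the preferred longitude $\lambda_i(L) \in \pi_1(X)$ to the preferred longitude $\lambda_i(\Tube(L)) \in \pi_1(W)$. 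Therefore $(\iota_0)^{-1}_k(\lambda_i(L))$ and $(\iota_0)^{-1}_k(\lambda_i(\Tube(L)))$ are the \emph{same} element $\lambda_i^k$ of $F_n/\Gamma_k F_n$. Applying the Magnus expansion $E$ (composed with the degree-$k$ projection) to this common element yields the same coefficients; by definition these are $\mu_I(L)$ on one side and $\mu^{(4)}_I(\Tube(L)) = \mu^w_I(L)$ on the other, for every $I$ of length $\le k$. Letting $k\to\infty$ gives the result for all $I$.

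The main obstacle I expect is the longitude bookkeeping, i.e. verifying that $\Tube$ sends preferred longitudes to preferred longitudes, including the orientation/normalization conventions. The group isomorphism itself (with meridians matching) is already available from Proposition \ref{prop:DWirtinger}; what needs care is the second, "framing", coordinate of the longitude on $\partial N(A_i) = S^1\times S^1\times I$ and the fact that the inflation of a classical string link component (a circle $\times\, I$) produces precisely the expected torus with its standard longitude. I would handle this by tracking the geometric longitude explicitly through the inflation picture (the ribbon band construction of Section \ref{sec:broken-surfaces}), using that $\mathrm{lk}$ is abelianization-level data and that the identification on $H_1$ is the identity, so any discrepancy is a multiple of $m_i^0$ and is killed by the zero-linking normalization — exactly the argument already used in Remark \ref{rk:preflong}.
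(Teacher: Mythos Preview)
Your proposal is correct and follows essentially the same approach as the paper: the paper's argument (given in the paragraph immediately preceding the theorem) is precisely that the Tube map induces an isomorphism of $\pi_1$'s carrying meridians to meridians and preferred longitudes to preferred longitudes, via the Wirtinger correspondence of Proposition~\ref{prop:DWirtinger}. If anything, you are more explicit than the paper about the longitude normalization (the linking-number/abelianization check), which the paper simply asserts.
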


\subsection{Comparison with previous works}\label{sec:compare}

Let us now discuss previously existing (partial) virtual extensions of Milnor invariants.

The first virtual extension of Milnor invariants is due to Dye and Kauffman \cite{DK}. There, it is already noted that these are actually welded invariants. 
The authors, however, restricted themselves to link--homotopy invariants, and focussed on the link case.  
This means in particular that the Dye-Kauffman extensions are defined modulo a certain indeterminacy $\Delta$. 
Their construction follows very closely Milnor's original work \cite{Milnor2}, using the virtual knot group, 
and actually only slightly deviates from  \cite{Milnor2} in the definition of the indeterminacy $\Delta$: since the authors only seek a link--homotopy invariant, 
a simpler definition can indeed be used. 
However, the Dye-Kauffman extension does not always coincide with Milnor invariants for classical links, 
precisely because of this different choice of indeterminacy. 
Consider for example a $3$-component link $L$ obtained from the Borromean rings by inserting a positive clasp between the first and second components; 
according to Milnor's definition we have $\Delta(123)=1$, but in Dye--Kauffman's definition we have 
$\Delta(123)=0$, so that the invariants $\om_{123}$ of the link $L$ do not agree in both definitions. 

Recently, Kotorii gave in \cite{kotorii} another virtual extension of Milnor invariants, using Turaev's theory of nanowords \cite{turaevw} 
to describe and study virtual links. 
This extension is thus purely combinatorial and, again, addresses the link case and is only valid for link--homotopy invariants. 
Although it indeed coincides with Milnor's original invariants when restricted to classical links, 
it seems quite challenging to us to generalize directly the Kotorii extension to all Milnor invariants. 
Indeed, the fact that Milnor invariants are indexed by sequences with
no repetitions is used to ensure invariance under Reidemeister 3 move (see Propositions 6.4 \cite{kotorii}).  

The third virtual extension of Milnor invariants is due to Kravchenko and Polyak \cite{KP}. 
The authors actually provides Gauss diagram formula for $\mu$-invariants, and their extension is therefore closest to the extension given above.  
The result of \cite{KP} holds for string links (i.e. provides integer--valued virtual extensions), but as the preceding ones it is only valid for Milnor 
link--homotopy invariants. 
Indeed, identifying the Gauss diagram invariants defined in \cite{KP} with Milnor invariants relies on Polyak's skein formula \cite{polyak}, 
which is only valid for sequences without repetitions. \\
It would be interesting to generalize the Kravchenko--Polyak extension to all sequences 
(or at least, to identify the resulting Gauss diagram formulas with Milnor invariants).
This seem to be a non-obvious problem, in particular because the string link analogue of  \cite[Thm.~7]{Milnor2}
(which, roughly speaking, shows how Milnor's link--homotopy invariants suffice to generate all $\om$--invariants of links via cabling)
does not hold in full generality, as outlined in \cite[Sec.~3]{yasuharaAGT}.
Actually, we expect that one way to address this question would be by identifying  the extension of Kravchenko--Polyak's Gauss diagram formulas
with the `topologically--defined' extension $\mu^w$ of the present
paper; this identification is at least clear in the case of
non-repeated indices.


\bibliographystyle{abbrv}
\bibliography{wSL}

\begin{thebibliography}{10}

\bibitem{artin2}
E.~{Artin}.
\newblock {Zur Isotopie zweidimensionaler Fl\"achen im $\mathbb R^4$.}
\newblock {\em {Abh. Math. Semin. Univ. Hamb.}}, 4:174--177, 1925.

\bibitem{Jess}
B.~Audoux.
\newblock On the welded {T}ube map.
\newblock to appear in Contemp. Math. of the AMS, 2014.

\bibitem{pdm}
B.~Audoux, P.~Bellingeri, J.-B. Meilhan, and E.~Wagner.
\newblock On usual, virtual and welded knotted objects up to homotopy.
\newblock to appear in J. Math. Soc. Japan, 2015.

\bibitem{WKO1}
D.~Bar-Natan and Z.~Dancso.
\newblock Finite type invariants of w-knotted objects {I}: w-knots and the
  {A}lexander polynomial.
\newblock to appear in Algebr. Geom. Topol., 2014.

\bibitem{WKO2}
D.~Bar-Natan and Z.~Dancso.
\newblock Finite type invariants of w-knotted objects {II}: Tangles, foams and
  the {K}ashiwara-{V}ergne problem.
\newblock arXiv e-prints:1405.1955, 2014.

\bibitem{Bardakov}
V.~G. Bardakov.
\newblock The virtual and universal braids.
\newblock {\em Fund. Math.}, 184:1--18, 2004.

\bibitem{BT}
A.~{Bartels} and P.~{Teichner}.
\newblock {All two dimensions links are null homotopic.}
\newblock {\em {Geom. Topol.}}, 3:235--252, 1999.

\bibitem{BH}
T.~E. Brendle and A.~Hatcher.
\newblock Configuration spaces of rings and wickets.
\newblock {\em Comment. Math. Helv.}, 88(1):131--162, 2013.

\bibitem{BZ}
G.~Burde and H.~Zieschang.
\newblock {\em Knots}, volume~5 of {\em de Gruyter Studies in Mathematics}.
\newblock Walter de Gruyter \& Co., Berlin, second edition, 2003.

\bibitem{CKS}
J.~{Carter}, S.~{Kamada}, and M.~{Saito}.
\newblock {Stable equivalence of knots on surfaces and virtual knot
  cobordisms.}
\newblock {\em {J. Knot Theory Ramifications}}, 11(3):311--322, 2002.

\bibitem{Casa}
J.~S. Carter and M.~Saito.
\newblock {\em Knotted surfaces and their diagrams}, volume~55 of {\em
  Mathematical Surveys and Monographs}.
\newblock American Mathematical Society, Providence, RI, 1998.

\bibitem{Coch}
D.~T. Cochran.
\newblock Link concordance invariants and homotopy theory.
\newblock {\em Invent. math.}, 90:635--645, 1987.

\bibitem{DK}
H.~A. Dye and L.~H. Kauffman.
\newblock Virtual homotopy.
\newblock {\em J. Knot Theory Ramifications}, 19(7):935--960, 2010.

\bibitem{FRR}
R.~Fenn, R.~Rim{\'a}nyi, and C.~Rourke.
\newblock The braid-permutation group.
\newblock {\em Topology}, 36(1):123--135, 1997.

\bibitem{FeRo}
R.~Fenn and D.~Rolfsen.
\newblock Spheres may link homotopically in 4-space.
\newblock {\em J. London Math. Soc.}, 34(2):177--184, 1986.

\bibitem{F}
T.~Fiedler.
\newblock {\em Gauss diagram invariants for knots and links}, volume 532 of
  {\em Mathematics and its Applications}.
\newblock Kluwer Academic Publishers, Dordrecht, 2001.

\bibitem{FM}
R.~{Fox} and J.~W. {Milnor}.
\newblock {Singularities of 2-spheres in 4-space and cobordism of knots.}
\newblock {\em {Osaka J. Math.}}, 3:257--267, 1966.

\bibitem{Fox}
R.~H. Fox.
\newblock Some problems in knot theory.
\newblock In {\em Topology of 3-manifolds and related topics ({P}roc. {T}he
  {U}niv. of {G}eorgia {I}nstitute, 1961)}, pages 168--176. Prentice-Hall,
  Englewood Cliffs, N.J., 1962.

\bibitem{GPV}
M.~Goussarov, M.~Polyak, and O.~Viro.
\newblock Finite type invariants of virtual and classical knots.
\newblock {\em Topology}, 39:1045--1168, 2000.

\bibitem{HL}
N.~Habegger and X.-S. Lin.
\newblock The classification of links up to link-homotopy.
\newblock {\em J. Amer. Math. Soc.}, 3:389--419, 1990.

\bibitem{HL2}
N.~Habegger and X.-S. Lin.
\newblock On link concordance and {M}ilnor's {$\overline {\mu}$} invariants.
\newblock {\em Bull. London Math. Soc.}, 30(4):419--428, 1998.

\bibitem{HM}
N.~Habegger and G.~Masbaum.
\newblock The {K}ontsevich integral and {M}ilnor's invariants.
\newblock {\em Topology}, 39(6):1253--1289, 2000.

\bibitem{HMe}
N.~Habegger and J.-B. Meilhan.
\newblock On the classification of links up to finite type.
\newblock In {\em Topology and physics}, volume~12 of {\em Nankai Tracts
  Math.}, pages 138--150. World Sci. Publ., Hackensack, NJ, 2008.

\bibitem{IK}
A.~Ichimori and T.~Kanenobu.
\newblock Ribbon torus knots presented by virtual knots with up to four
  crossings.
\newblock {\em Journal of Knot Theory and Its Ramifications}, 21(13):1240005,
  2012.

\bibitem{KS}
T.~Kanenobu and A.~Shima.
\newblock Two filtrations of ribbon 2-knots.
\newblock In {\em Proceedings of the {F}irst {J}oint {J}apan-{M}exico {M}eeting
  in {T}opology ({M}orelia, 1999)}, volume 121, pages 143--168, 2002.

\bibitem{Kauffman}
L.~H. Kauffman.
\newblock Virtual knot theory.
\newblock {\em European J. Combin.}, 20(7):663--690, 1999.

\bibitem{KM}
M.~{Kervaire} and J.~W. {Milnor}.
\newblock {On 2-spheres in 4-manifolds.}
\newblock {\em {Proc. Natl. Acad. Sci. USA}}, 47:1651--1657, 1961.

\bibitem{kervaire}
M.~A. {Kervaire}.
\newblock {Les n{\oe}uds de dimensions sup\'erieures.}
\newblock {\em {Bull. Soc. Math. Fr.}}, 93:225--271, 1965.

\bibitem{Kirk}
P.~Kirk.
\newblock Links homotopy with one codimension two component.
\newblock {\em Trans. Amer. Math . Soc}, 319(2):663--688, 1990.

\bibitem{KoLink}
U.~Koschorke.
\newblock A generalization of milnor's $\mu$-invariants to higher dimensional
  link maps.
\newblock {\em Topology}, 36(2):301--324, 1997.

\bibitem{kotorii}
Y.~Kotorii.
\newblock The {M}ilnor {$\overline{\mu}$} invariants and nanophrases.
\newblock {\em J. Knot Theory Ramifications}, 22(2):1250142, 28, 2013.

\bibitem{KP}
O.~Kravchenko and M.~Polyak.
\newblock Diassociative algebras and {M}ilnor's invariants for tangles.
\newblock {\em Lett. Math. Phys.}, 95(3):297--316, 2011.

\bibitem{Kuperberg}
G.~{Kuperberg}.
\newblock {What is a virtual link?}
\newblock {\em {Algebr. Geom. Topol.}}, 3:587--591, 2003.

\bibitem{Li}
G.-S. Li.
\newblock An invariant of link homotopy in dimension four.
\newblock {\em Topology}, 36:881--897, 1997.

\bibitem{RoMa}
W.~S. Massey and D.~Rolfsen.
\newblock Homotopy classification of higher dimensional links.
\newblock {\em Indiana Univ. Math. J.}, 34:375--391, 1986.

\bibitem{Milnor}
J.~Milnor.
\newblock Link groups.
\newblock {\em Ann. of Math. (2)}, 59:177--195, 1954.

\bibitem{Milnor2}
J.~Milnor.
\newblock Isotopy of links. {A}lgebraic geometry and topology.
\newblock In {\em A symposium in honor of {S}. {L}efschetz}, pages 280--306.
  Princeton University Press, Princeton, N. J., 1957.

\bibitem{polyak}
M.~Polyak.
\newblock Skein relations for {M}ilnor's {$\mu$}-invariants.
\newblock {\em Algebr. Geom. Topol.}, 5:1471--1479 (electronic), 2005.

\bibitem{PV}
M.~Polyak and O.~Viro.
\newblock Gauss diagram formulas for {V}assiliev invariants.
\newblock {\em Internat. Math. Res. Notices}, (11):445ff., approx.\ 8 pp.\
  (electronic), 1994.

\bibitem{roseman}
D.~Roseman.
\newblock Reidemeister-type moves for surfaces in four-dimensional space.
\newblock In {\em Knot theory ({W}arsaw, 1995)}, volume~42 of {\em Banach
  Center Publ.}, pages 347--380. Polish Acad. Sci., Warsaw, 1998.

\bibitem{Citare}
S.~Satoh.
\newblock Virtual knot presentation of ribbon torus-knots.
\newblock {\em J. Knot Theory Ramifications}, 9(4):531--542, 2000.

\bibitem{stallings}
J.~Stallings.
\newblock Homology and central series of groups.
\newblock {\em J. Algebra}, 2:170--181, 1965.

\bibitem{suzuki}
S.~Suzuki.
\newblock Knotting problems of {$2$}-spheres in {$4$}-sphere.
\newblock {\em Math. Sem. Notes Kobe Univ.}, 4(3):241--371, 1976.

\bibitem{turaevw}
V.~Turaev.
\newblock Lectures on topology of words.
\newblock {\em Jpn. J. Math.}, 2(1):1--39, 2007.

\bibitem{MKS}
D.~S. W.~Magnus, A.~Karrass.
\newblock {\em Combinatorial group theory}, volume vol. XIII of {\em Pure and
  Appl. Math.}
\newblock Interscience, NY, 1966.

\bibitem{yajima}
T.~Yajima.
\newblock On the fundamental groups of knotted {$2$}-manifolds in the
  {$4$}-space.
\newblock {\em J. Math. Osaka City Univ.}, 13:63--71, 1962.

\bibitem{Yaji}
T.~Yajima.
\newblock On simply knotted spheres in {$R^{4}$}.
\newblock {\em Osaka J. Math.}, 1:133--152, 1964.

\bibitem{Yana}
T.~Yanagawa.
\newblock On ribbon {$2$}-knot: {T}he {$3$}-manifold bounded by the
  {$2$}-knots.
\newblock {\em Osaka J. Math.}, 6:447--464, 1969.

\bibitem{yanagawa2}
T.~{Yanagawa}.
\newblock {On ribbon 2-knots II: The second homotopy group of the complementary
  domain.}
\newblock {\em {Osaka J. Math.}}, 6:465--473, 1969.

\bibitem{yanagawa3}
T.~{Yanagawa}.
\newblock {On Ribbon 2-knots III: On the unknotting Ribbon 2-knots in $S\sp
  4$.}
\newblock {\em {Osaka J. Math.}}, 7:165--172, 1970.

\bibitem{yasuharaAGT}
A.~Yasuhara.
\newblock Classification of string links up to self delta-moves and
  concordance.
\newblock {\em Algebr. Geom. Topol.}, 9(1):265--275, 2009.

\end{thebibliography}

\end{document}